\newcommand{\R}{\mathbb{R}}
\newcommand{\Z}{\mathbb{Z}}
\newcommand{\Sd}{\textup{Sd}} 
\newcommand{\ord}{\textup{Ord}} 
\newcommand{\for}{\textup{For}} 
\newcommand{\lk}{\textup{Lk}} 
\DeclareMathAlphabet{\mathcalligra}{T1}{calligra}{m}{n}
\DeclareFontFamily{U}{mathx}{\hyphenchar\font45}
\DeclareFontShape{U}{mathx}{m}{n}{
      <5> <6> <7> <8> <9> <10>
      <10.95> <12> <14.4> <17.28> <20.74> <24.88>
      mathx10
      }{}
\DeclareSymbolFont{mathx}{U}{mathx}{m}{n}
\DeclareMathAccent{\widecheck}{0}{mathx}{"71}
\newtheorem{theo}{Theorem}[section]
\newtheorem{lemma}[theo]{Lemma}
\newtheorem{prop}[theo]{Proposition}
\newtheorem{cor}[theo]{Corollary}
\newtheorem{remark}[theo]{Remark}
\newtheorem{ex}[theo]{Example}
\newtheorem{defi}[theo]{Definition}
\begin{document}
\title{Morse shellings on products}
\author{Jean-Yves Welschinger}
\maketitle
\begin{abstract} 
\vspace{0.5cm}

We recently defined a property of Morse shellability (and tileability) of finite simplicial complexes which extends the classical one and its relations with discrete Morse theory.
We now prove that the product of two Morse tileable or shellable simplicial complexes carries Morse tileable or shellable triangulations under some tameness condition, and that any tiling or shelling becomes tame after one barycentric subdivision. 
We deduce that any finite product of closed manifolds of dimensions less than four carries Morse shellable triangulations whose critical and $h$-vectors are palindromic. 
We also prove that the $h$-vector of a Morse tiling is always palindromic in dimension less than four or in the case of an $h$-tiling, provided its critical vector is palindromic. 

{Keywords :  simplicial complex, shellable complex, $h$-vector, tilings, discrete Morse theory, triangulation.}

\textsc{Mathematics subject classification 2020: }{57Q70, 55U10, 52C22, 05E45.}
\end{abstract}

\section{Introduction}

Recall that the face vector of a finite $n$-dimensional simplicial complex $K$ encodes the number of simplices it contains in each dimension, that is its number of vertices, edges and so on, see \S \ref{subsecsimplcx}. When $K$ is the boundary of a convex polytope for example, it has been (strikingly) understood by L.J. Billera, C.W. Lee and R. P. Stanley what this face vector can be, confirming an earlier conjecture of P. McMullen, see \cite{BilLee, Stan, Fult, Z}. The answer is expressed in terms of its $h$-vector $h(K) = (h_0 (K), \dots , h_{n+1} (K))$, which is a linear recombination of its face vector, see \S \ref{subsecsimplcx}, and turns out to coincide with the list of Betti numbers of the toric variety $X(K)$ associated to the convex polytope. In particular, Poincaré duality in $X(K)$ implies that $h_j (K) = h_{n+1-j} (K)$ for every $j \in \{0, \dots , n+1\}$, a result which also follows directly from the Dehn-Sommerville relations, see \cite{McMu, Mac, Klain, Z} or also Theorem $1.1$ of \cite{SaWelDCG}. We will declare such a vector to be palindromic, see Definition \ref{defpalind}. In general, it is unclear how to understand $h$-vectors, except at least for shellable complexes, see \S $8.3$ of \cite{Z}. We introduced in \cite{SaWel1} a notion of tiling of simplicial complexes and when such a tiling $\tau$ exists, e.g.  for shellable complexes, defined its $h$-vector $h(\tau) = (h_0 (\tau), \dots , h_{n+1} (\tau))$ to be the number of tiles of each type used by $\tau$, see \S \ref{subsecMorseshel}. A tile here, or basic tile, is just a simplex deprived of several of its codimension one faces, whose number is called its order, so that closed and open simplices are particular ones, of minimal and maximal order respectively, see Definition \ref{defbasictile}. By Theorem $4.9$ and Corollary $4.10$ of \cite{SaWel1}, two $h$-tilings $\tau$ and $\tau'$ of a simplicial complex $K$ have same $h$-vector provided $h_0 (\tau) = h_0 (\tau')$ and when moreover the latter equals one, this $h$-vector coincides with the $h$-vector of $K$. These results thus sometimes provide a geometric meaning of $h$-vectors but also provide a larger class of vectors of interest, for $h_0 (\tau)$ need not be one. If among the closed manifolds, only the spheres carry shellable triangulations \cite{Koz, Whi}, we do not know which ones carry $h$-tileable triangulations. We however prove the following, see Corollary \ref{cortores}. 

\begin{theo}
\label{theointro1}
The product of a sphere and a torus of any dimensions carries $h$-tileable triangulations. 
\end{theo}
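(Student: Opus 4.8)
The plan is to assemble the product from shellable triangulations of its factors and then invoke the product theorem quoted above. A sphere $S^p$ of arbitrary dimension admits a classically shellable triangulation, namely the boundary $\partial\Delta^{p+1}$ of the standard $(p+1)$-simplex, and the circle $S^1 = \partial\Delta^2$ is likewise shellable; writing the torus as the iterated product $T^q = (S^1)^q$, every factor of $S^p \times T^q$ is thus shellable. Since a classical shelling is in particular a Morse shelling and hence gives rise to an $h$-tiling with $h_0 = 1$, each factor carries an $h$-tileable triangulation to start from.

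Next I would secure the tameness hypothesis required by the product theorem. By the subdivision result quoted above, one barycentric subdivision turns any tiling or shelling into a tame one on the subdivided complex, so I replace the triangulations of $S^p$ and of each circle by their barycentric subdivisions, obtaining tame $h$-tilings. I then apply the product theorem iteratively: first to $S^p$ and one copy of $S^1$, producing a Morse tileable triangulation of $S^p \times S^1$, then to this output and the next circle factor, and so on, until the full product $S^p \times (S^1)^q = S^p \times T^q$ is reached. Before each application I re-subdivide the current accumulated factor if necessary, again invoking the subdivision result to restore tameness, so that the hypotheses of the product theorem remain satisfied at every stage.

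The main obstacle is to propagate the $h$-tiling property, rather than mere tileability, across the product construction. The product theorem directly yields a Morse tiling of $S^p \times T^q$, but I must check both that the successive barycentric subdivisions carry an $h$-tiling to an $h$-tiling and that the tiles produced by the product construction fit together into an $h$-tiling; I expect this to follow from the explicit description of the product tiles together with the behaviour of the associated $h$-vectors under products, the point being that the $h$-vector of the product tiling is governed by the product of the factors' $h$-polynomials while all the building blocks have $h_0 = 1$. Note finally that $S^p \times T^q$ is not a sphere once $q \geq 1$, so it carries no classically shellable triangulation; it is precisely the flexibility of Morse tilings and the robustness of the $h$-tiling structure under products --- available in every dimension because the factors themselves are shellable --- that makes the conclusion possible beyond the dimension restriction of the general product-of-manifolds statement.
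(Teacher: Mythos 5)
There is a genuine gap, and it is exactly at the point you flag as "the main obstacle" and then leave to an expectation. The product construction of Theorem~\ref{theoproduct} applied to two \emph{shellable} factors does not produce an $h$-tiling: its critical vector is the product $c(\tau_1)c(\tau_2)$, and since a shelling of a closed $p$-sphere has $c$-vector $(1,0,\dots,0,1)$, the product of the sphere's shelling with a shelling of $S^1$ already yields critical tiles of the intermediate indices $1$ and $p$, i.e.\ non-basic Morse tiles, so the resulting tiling cannot be an $h$-tiling. Your heuristic that "the $h$-vector of the product is governed by the product of the $h$-polynomials while all building blocks have $h_0=1$" does not rescue this; it is precisely the presence of an open simplex (order $=$ dimension $+1$) in \emph{both} factors that violates Condition~$h$ of \S\ref{subseccond}, which is the hypothesis of Theorem~\ref{theoproducthtiling} governing when the product of two $h$-tilings is again an $h$-tiling. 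The barycentric-subdivision step is also both unnecessary and harmful here: every $h$-tiling is already tame (the tameness condition of Definition~\ref{deftame} is vacuous for basic tiles), whereas Proposition~\ref{propbarycsubdtame} only returns a tame \emph{Morse} tiling on the subdivision, with no claim that the $h$-tiling property survives.

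The missing idea is the choice of tiling on the circle factors. The paper equips each copy of $\partial\Delta_2$ not with its shelling but with the non-shellable cyclic $h$-tiling by three basic tiles of order one (second part of Example~\ref{exhtiling}), each edge deprived of its least vertex. This tiling has no critical tiles at all, so Condition~$h$ is satisfied against \emph{any} $h$-tiled first factor, and Theorem~\ref{theoproducthtiling} (in the concrete form of Theorem~\ref{theodelta2}) then produces an $h$-tiling of $S\times\partial\Delta_2$ again with no critical tiles; iterating over the $m$ circle factors starting from the standard shelling of $\partial\Delta_{n+1}$ gives Corollary~\ref{cortores}. If you want to salvage your argument, replace the shelling of each $S^1$ by this order-one tiling and verify Condition~$h$ at each stage; as written, the induction cannot get past the first product.
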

Definitely, a closed manifold of even dimension and negative Euler characteristic has no $h$-tileable triangulations though, see Lemma \ref{lemmaEuler}. 

Influenced by the discrete Morse theory of R. Forman \cite{For1, For2}, we enlarged in \cite{SaWel2} the collection of tiles in each dimension by allowing a unique face of higher codimension to be removed from a simplex, thus introducing Morse tiles, see Definition \ref{defMorsetile}. These include a collection of critical tiles of any index and led to properties of Morse tileable and shellable complexes, see \S \ref{subsecMorseshel}. Among basic tiles, closed and open simplices are the only critical ones, of minimal and maximal indices respectively. We proved that any triangulation on a closed surface is Morse shellable and that any closed three-manifold carries Morse shellable triangulations, see Theorems $1.3$ and $1.4$ of \cite{SaWel2}. Moreover, Morse tilings carry compatible discrete vector fields and in the case of Morse shellings, these are gradient vector fields of discrete Morse functions whose critical points are in one-to-one correspondence with the critical tiles of the tiling, preserving the index, see Theorem $1.2$ of \cite{SaWel2}. We now prove, see Corollary \ref{corproductmanifolds}.
\begin{theo}
\label{theointro2}
Any finite product of closed manifolds of dimensions less than four carries Morse shellable triangulations. 
\end{theo}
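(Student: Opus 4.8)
The plan is to argue by induction on the number $k$ of factors, combining the low-dimensional Morse shellability results of \cite{SaWel2} with the product theorem announced in the abstract, and using barycentric subdivision to enforce the required tameness hypothesis at each step.

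For the base case $k=1$, I would observe that every closed manifold of dimension less than four carries a Morse shellable triangulation. In dimensions zero and one this is elementary, a finite set of points and a finite disjoint union of circles being classically shellable, hence Morse shellable. In dimension two, every triangulation of a closed surface is Morse shellable by Theorem $1.3$ of \cite{SaWel2}, and in dimension three every closed manifold carries a Morse shellable triangulation by Theorem $1.4$ of \cite{SaWel2}. This is the only place where the dimension restriction on the factors gets used: the product operation will raise the total dimension without bound, so ``less than four'' really constrains only the individual factors, the results of \cite{SaWel2} supplying Morse shellings that the product theorem then propagates to all dimensions.

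For the inductive step, suppose $P = M_1 \times \dots \times M_{k-1}$ already carries a Morse shellable triangulation $T$ equipped with a Morse shelling $\sigma$, and let $M_k$ be a closed manifold of dimension less than four with a Morse shelling $\sigma'$ of a triangulation $T'$. I would first replace $\sigma$ and $\sigma'$ by their one-fold barycentric subdivisions: by the result that any shelling becomes tame after a single barycentric subdivision, the shellings on $\Sd T$ and $\Sd T'$ are \emph{tame} Morse shellings of triangulations of $P$ and $M_k$ respectively. The product theorem then yields a Morse shellable triangulation of the product of these two tamely shelled complexes, which triangulates $P \times M_k = M_1 \times \dots \times M_k$. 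By induction, every finite product of closed manifolds of dimension less than four carries a Morse shellable triangulation.

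The main obstacle is the tameness condition required by the product theorem: the product of two Morse shellings is not a priori a shelling unless the two shellings interact compatibly along the simplices of the factors. I expect the crux of the argument to be the invocation of the subdivision result, which must simultaneously preserve Morse shellability, carrying a Morse shelling of $K$ to one of $\Sd K$, and render it tame, so that the product theorem applies at every step of the induction; the inductive bookkeeping itself is then routine.
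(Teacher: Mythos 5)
Your proof is correct and follows essentially the same route as the paper: base the induction on Theorems $1.3$ and $1.4$ of \cite{SaWel2}, use Proposition \ref{propbarycsubdtame} to make the shellings tame, and propagate via Theorem \ref{theoproduct}. The only (harmless) redundancy is that you re-subdivide the partial product at each inductive step, whereas Theorem \ref{theoproduct} already outputs a \emph{tame} Morse shelling on the product, so only the initial factors need to be subdivided.
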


Recall that H. Bruggesser and P. Mani proved that the boundary of every convex polytope is shellable, while some triangulations on spheres are not, see \cite{BruMan, Lick, Z}. We do not know any closed triangulated manifold which is not Morse tileable or shellable.
We encode the number of critical tiles of each index used by an $n$-dimensional Morse tiling $\tau$ in a critical vector $c(\tau) = (c_0 (\tau), \dots , c_{n} (\tau))$ 
and likewise, the number of tiles of each order it uses in an $h$-vector $h(\tau) = (h_0 (\tau), \dots , h_{n+1} (\tau))$, as in the case of $h$-tilings, see \S \ref{subsecMorseshel}. We then prove, see Corollary \ref{corpalindromic} and Theorem \ref{theomvector}.

\begin{theo}
\label{theointro3}
The $h$-vector of an $h$-tiling on a closed triangulated manifold is palindromic iff its $c$-vector is. Likewise, the $h$-vector of a Morse tiling on a closed triangulated manifold of dimension less than four is palindromic iff its $c$-vector is. 
\end{theo}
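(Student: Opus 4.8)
The plan is to reduce both palindromy properties to explicit linear conditions on the critical vector $c(\tau)$, and to match those conditions using the Dehn--Sommerville relations of the underlying closed manifold. Write $h(\tau,t)=\sum_{j=0}^{n+1}h_j(\tau)t^j$ and $c(\tau,t)=\sum_{i=0}^{n}c_i(\tau)t^i$ for the generating polynomials, and for a polynomial $Q$ of degree at most $d$ let $Q^{\ast d}(t)=t^{d}Q(1/t)$ denote its coefficient reversal, so that palindromy of $h(\tau)$ means $h(\tau,t)=h(\tau,t)^{\ast(n+1)}$ and palindromy of $c(\tau)$ means $c(\tau,t)=c(\tau,t)^{\ast n}$.

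First I would establish a master formula comparing $h(\tau)$ with the genuine $h$-vector $h(K)$ of the complex. Since the tiles partition the faces of $K$, the face-polynomial of $K$ is the sum of the face-polynomials of the individual tiles, and applying the standard transform $(1-t)^{n+1}F(t/(1-t))$ termwise converts each tile into its local $h$-contribution. A direct computation then shows that a top-dimensional basic tile of order $o$ with $1\le o\le n$ contributes exactly $t^{o}$, hence has zero defect against the monomial it sends to $h(\tau,t)$; that the single empty face of $K$ contributes a global term $(1-t)^{n+1}$; that a closed simplex (index $0$) has defect $-(1-t)^{n+1}$; that a critical tile of index $i$ with $1\le i\le n-1$, obtained from an order-$o$ basic tile by deleting one face of dimension $i-1$, contributes $t^{o}-t^{i}(1-t)^{n+1-i}$ and hence has defect $-t^{i}(1-t)^{n+1-i}$ \emph{independently of its order} $o$; and that open simplices (index $n$) have zero defect. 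The crucial point is that the defect depends only on the index. Summing yields
\[
h(K,t)=h(\tau,t)+(1-t)^{n+1}-\sum_{i=0}^{n-1}c_i(\tau)\,t^{i}(1-t)^{n+1-i}=:h(\tau,t)+C(t),
\]
which for an $h$-tiling reduces to $h(K,t)=h(\tau,t)+(1-h_0(\tau))(1-t)^{n+1}$, recovering the coincidence $h(\tau)=h(K)$ when $h_0(\tau)=1$.

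Next I would feed in Dehn--Sommerville. For a closed $n$-manifold one has $h(K,t)^{\ast(n+1)}=h(K,t)+\delta\,(1-t)^{n+1}$ with $\delta=\chi(K)-\chi(S^{n})$, while the Morse relation $\chi(K)=\sum_{i}(-1)^{i}c_i(\tau)$ expresses $\delta$ through $c(\tau)$. Combining this with the master formula produces the anti-palindromic defect
\[
D_h(t):=h(\tau,t)-h(\tau,t)^{\ast(n+1)}=C(t)^{\ast(n+1)}-C(t)-\delta\,(1-t)^{n+1},
\]
an explicit polynomial whose coefficients are linear in the $c_i(\tau)$. For an $h$-tiling only $c_0$ and $c_n$ survive, $C=(1-c_0)(1-t)^{n+1}$, and the computation collapses to $D_h=0\iff\chi(K)=2c_0(\tau)\iff c_0(\tau)=c_n(\tau)$, i.e. exactly palindromy of $c(\tau)$, in every dimension. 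For general Morse tilings I would then compute $D_h$ by hand in the remaining low dimensions: in dimension two $D_h=(c_0-c_2)(1-t)^{3}$, so $D_h=0\iff c_0=c_2$; in dimension three $\delta=0$ and $D_h=(c_1-c_2)(1+t)(1-t)^{3}$, so $D_h=0\iff c_1=c_2$, after which the Euler relation $c_0-c_1+c_2-c_3=0$ forces $c_0=c_3$, giving full palindromy of $c(\tau)$.

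The main obstacle is the exact combinatorics behind the local defects $-t^{i}(1-t)^{n+1-i}$: one must pin down the precise face set of each critical Morse tile, verify that its $h$-contribution is genuinely order-independent, and reconcile the asymmetric bookkeeping of the extreme indices (the empty-face term sits with the closed simplices while the open simplices contribute nothing) with the Poincaré duality that Dehn--Sommerville enforces on $h(K)$. The dimensional hypothesis then enters structurally: palindromy of $c(\tau)$ imposes $\lfloor(n+1)/2\rfloor$ independent conditions, whereas $D_h=0$ together with the single Euler relation pins down that many only when $n\le 3$, the dimension-three case relying essentially on $\chi(K)=0$ to upgrade $c_1=c_2$ into full symmetry. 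For $h$-tilings the critical vector has only two nonzero entries, so one condition always suffices and the statement survives in all dimensions. Checking that these condition-counts coincide exactly, and not merely in aggregate, is the delicate part of the argument.
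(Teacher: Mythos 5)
Your first half (the $h$-tiling statement in all dimensions) is essentially the paper's own argument: the relation $\sum_i h_i(\tau)X^{n+1-i}=\sum_i f_{i-1}(K)(X-1)^{n+1-i}$ with $f_{-1}=h_0(\tau)$ from Theorem 4.9 of \cite{SaWel1}, combined with Dehn--Sommerville in Macdonald's form and $\chi(K)=c_0(\tau)+(-1)^nc_n(\tau)$, and your computations there check out. The Morse half, however, rests on a false ``crucial point''. For a Morse tile of order $o$ whose Morse face $\mu$ has dimension $l$, deleting $\mu$ removes \emph{all} $2^{l+1-o}$ faces of the underlying basic tile contained in $\mu$ (not one face), and the resulting $h$-defect is $-t^{o}(1-t)^{n-l}$. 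This equals your $-t^{i}(1-t)^{n+1-i}$ only in the critical case $l=o-1$; it is \emph{not} independent of the order, it is \emph{not} zero for regular tiles with nonempty Morse face, and it is not determined by the critical vector. Such tiles exist precisely in dimension $\geq 3$: in dimension $3$ a regular tile of order $1$ may carry an edge as Morse face, contributing $-t(1-t)^{2}$ per tile to $h(K,t)-h(\tau,t)$. Your master formula $h(K,t)=h(\tau,t)+C(t)$ with $C$ a function of $c(\tau)$ alone is therefore wrong exactly in the one Morse case ($n=3$) that is not already trivial.

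Your final identity $D_h=(c_1-c_2)(1+t)(1-t)^{3}$ nevertheless happens to be correct, because the omitted defect $-t(1-t)^{2}$ has coefficient vector $(0,-1,2,-1,0)$, which is palindromic under reversal at degree $4$ and so cancels from $C^{\ast 4}-C$. That is a genuine lucky accident which your write-up neither notices nor justifies; as it stands the dimension-three case is not proved. (It also signals why the method stalls at $n=4$, where the analogous defects $-t^{o}(1-t)^{2}$ are no longer self-reverse at degree $5$.) For comparison, the paper avoids the $h$-polynomial bookkeeping entirely in the Morse case: it double-counts facets versus missing facets of the tiles using the fact that every $(n-1)$-simplex of a closed manifold lies in exactly two $n$-simplices, deduces that the order-$1$ and order-$n$ tile counts agree once the open and closed simplices balance, and gets palindromy of $h(\tau)$ in dimension $\leq 3$ without ever touching the Morse faces --- which is exactly the point where your computation goes astray. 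If you want to keep your route, you must add the regular-tile terms $-t^{o}(1-t)^{n-l}$ to $C(t)$ and verify explicitly that in dimension three they drop out of $C^{\ast(n+1)}-C$.
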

The tilings given by Theorems \ref{theointro1} and \ref{theointro2} can be chosen to have palindromic $h$-vectors as well. 
Does there exist a Morse tiling on a closed triangulated manifold which has palindromic $c$-vector but non palindromic $h$-vector? It would also be of interest to get Theorem \ref{theointro2} in any dimension. In dimension three, it has been obtained in \cite{SaWel2} by successive attachments of triangulated handles equipped with Morse shellings. We now prove the existence of such shellings on every handle, see Corollary \ref{corhandle}.
\begin{theo}
\label{theointro4}
For every $0 \leq k \leq n$, the handle $\stackrel{\circ}{\Delta}_k \times \Delta_{n-k}$ carries Morse shellable triangulations using a unique critical tile, of index $k$.
\end{theo}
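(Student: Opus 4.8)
The plan is to deduce this from our product theorem applied to the two tautological factors of the handle. Indeed, the open simplex $\stackrel{\circ}{\Delta}_k$ is, on its own, a single Morse tile which is critical of index $k$, since among basic tiles the open simplices are precisely the critical ones of maximal index; it thus carries the trivial one-step Morse shelling, whose critical vector is $(0, \dots, 0, 1)$ with the lone $1$ in position $k$. Symmetrically, the closed simplex $\Delta_{n-k}$ is a single critical tile of index $0$ and carries the trivial Morse shelling with critical vector $(1, 0, \dots, 0)$. Since the handle $\stackrel{\circ}{\Delta}_k \times \Delta_{n-k}$ is exactly the product of these two Morse shellable complexes, the statement should fall out of the product construction once the tameness condition is arranged.

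The first thing I would record is how critical tiles behave under the product. In the product of two Morse tilings each tile is a product of a tile of the first factor with a tile of the second, a critical tile of index $i$ times a critical tile of index $j$ producing a critical tile of index $i+j$, exactly as the index adds for a sum of Morse functions. Consequently the critical vector of the product shelling is the convolution of the two factor critical vectors, so that here the only nonvanishing contribution comes from the pair (index $k$, index $0$) and the product shelling uses a single critical tile, of index $k+0=k$, as required. This already settles the two extreme cases by inspection: for $k=0$ the handle is the closed simplex $\Delta_n$, a single critical tile of index $0$, and for $k=n$ it is the open simplex $\stackrel{\circ}{\Delta}_n$, a single critical tile of index $n$.

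The main point to secure, and where I expect the real work to lie, is the tameness hypothesis of the product theorem. The two trivial shellings above need not be tame as they stand, so I would invoke our subdivision result: after one barycentric subdivision each factor acquires a tame Morse shelling, and since barycentric subdivision is meant to leave the critical vector unchanged, $\Sd(\stackrel{\circ}{\Delta}_k)$ still carries a tame shelling with a single critical tile of index $k$ and $\Sd(\Delta_{n-k})$ one with a single critical tile of index $0$. Applying the product theorem to these tame shellings then yields a Morse shellable triangulation of $\Sd(\stackrel{\circ}{\Delta}_k) \times \Sd(\Delta_{n-k})$, which triangulates the same space $\stackrel{\circ}{\Delta}_k \times \Delta_{n-k}$, and by the previous paragraph this shelling uses exactly one critical tile, of index $k$. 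The delicate step is therefore twofold: verifying that the subdivision result indeed preserves the critical vector on the \emph{open} factor, so that no spurious critical tile is created in the interior of $\stackrel{\circ}{\Delta}_k$, and confirming that the tameness condition of the product theorem is genuinely met by these subdivided shellings and not merely by generic ones.
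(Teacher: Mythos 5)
Your argument is correct and lands on the right mechanism --- the handle is the product of the critical tiles $\stackrel{\circ}{\Delta}_k$ (index $k$) and $\Delta_{n-k}$ (index $0$), and the index adds under the product --- but it routes through heavier machinery than the paper does, and the detour is driven by a premise that is actually false. The paper obtains the statement as an immediate corollary of Theorem \ref{theoproducttiles}, the product theorem for two \emph{single} Morse tiles: that theorem has no tameness hypothesis at all (tameness appears there only as a conclusion), and applied to the two critical tiles above it directly yields a Morse shellable \emph{primitive} triangulation of $\stackrel{\circ}{\Delta}_k \times \Delta_{n-k}$ with a unique critical tile, of index $k+0=k$. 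Moreover, even if one insists on invoking the general product theorem (Theorem \ref{theoproduct}), the two one-tile shellings you start from \emph{are} already tame: they are $h$-tilings, basic tiles have empty Morse faces, so the second condition of Definition \ref{deftame} is vacuous and any total order on the vertices satisfies the first --- the paper records explicitly that every $h$-tiling is tame. Your barycentric-subdivision workaround is therefore unnecessary, though not wrong: Proposition \ref{propbarycsubdtame} does preserve the number and indices of critical tiles and does deliver tame shellings, so your construction produces a valid Morse shellable triangulation of the handle with a unique critical tile of index $k$ --- just a finer one than needed, and one that forgoes the explicit staircase description and the primitivity (with respect to the original product cell structure) that the direct argument provides.
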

The core of the paper actually aims at proving that the product of two Morse tileable or shellable simplicial complexes carries Morse tileable or shellable triangulations. We first prove this result for single Morse tiles, see Theorem \ref{theoproducttiles} of which Theorem \ref{theointro4} is a special case, and then observe a duality phenomenon which makes it possible to get the palindromic property, see Theorem \ref{theoduality}. We then prove the result in general under some tameness condition on the tilings, see \S \ref{subsectame}, to get.
\begin{theo}
\label{theointro5}
Let $K_1$ and $K_2$ be finite simplicial complexes equipped with tame Morse tilings (resp. shellings) $\tau_1$ and $\tau_2$. Then, $K_1 \times K_2$ carries tame Morse tileable (resp. shellable) primitive triangulations. Moreover, if $\tau_1$, $\tau_2$ are pure dimensional, these Morse tilings have palindromic $h$-vectors provided $h(\tau_1)$ and $h(\tau_2)$ are palindromic.
\end{theo}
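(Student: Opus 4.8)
The plan is to bootstrap from the single-tile case, Theorem~\ref{theoproducttiles}, to a decomposition of the whole product indexed by pairs of tiles. Write $\tau_1 = (\sigma_1, \dots, \sigma_p)$ and $\tau_2 = (\delta_1, \dots, \delta_q)$ for the two ordered Morse tilings, so that the products $\sigma_a \times \delta_b$ partition $K_1 \times K_2$ set-theoretically as $(a,b)$ ranges over $\{1,\dots,p\} \times \{1,\dots,q\}$. First I would fix a primitive triangulation of $K_1 \times K_2$ that restricts, on each product of simplices $\sigma \times \delta$ with $\sigma \in K_1$ and $\delta \in K_2$, to the standard staircase triangulation determined by compatible linear orders on the vertices of $K_1$ and $K_2$; the tameness hypothesis of \S\ref{subsectame} on $\tau_1$ and $\tau_2$ is what guarantees that such orders can be chosen so that the triangulations induced on shared faces agree. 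On each block $\sigma_a \times \delta_b$ I would then install the Morse tiling (resp.\ shelling) supplied by Theorem~\ref{theoproducttiles}.

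The next step is to assemble these block tilings into a single ordered tiling of $K_1 \times K_2$. I would order the blocks by the lexicographic product order on the pairs $(a,b)$ inherited from the two given orders, and within each block keep the internal order produced by Theorem~\ref{theoproducttiles}. The substantive verification is that this concatenated order is a bona fide Morse tiling, and in the shellable case a Morse shelling: each tile must attach to the union of its predecessors along a prescribed portion of its boundary. Because $\partial(\sigma_a \times \delta_b) = (\partial \sigma_a \times \delta_b) \cup (\sigma_a \times \partial \delta_b)$, the faces along which the block $\sigma_a \times \delta_b$ meets the previously installed blocks are governed by how $\sigma_a$ attaches in $\tau_1$ and how $\delta_b$ attaches in $\tau_2$.

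The main obstacle is exactly this attaching analysis, and it is where tameness does the heavy lifting. I expect tameness to be precisely the condition ensuring that, after passing to the primitive triangulation, the restriction of the product tiling to $\partial \sigma_a \times \delta_b$ and to $\sigma_a \times \partial \delta_b$ matches the tilings already present on the neighboring blocks, so that no face is left improperly covered and each newly added Morse tile meets its predecessors along a free or critical configuration. I would carry this out face by face, using tameness to reduce the check to the single-tile statement of Theorem~\ref{theoproducttiles} applied to the boundary tiles, and then verify that the resulting product tiling is itself tame, which both closes the induction needed for \emph{finite} products and legitimates the word ``tame'' in the conclusion.

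Finally, for the palindromic claim I would use the duality of Theorem~\ref{theoduality}, which associates to a Morse tiling $\tau$ of an $n$-dimensional complex a dual tiling $\tau^\ast$ whose $h$-vector is the reverse of $h(\tau)$, so that $h(\tau)$ is palindromic in the sense of Definition~\ref{defpalind} exactly when $\tau$ and $\tau^\ast$ share the same $h$-vector. Pure-dimensionality of $\tau_1$ and $\tau_2$ ensures every block $\sigma_a \times \delta_b$ has the same dimension $n = n_1 + n_2$, so that the order-reversing involution $j \mapsto n+1-j$ acts uniformly across the product tiling. I would then check that the product construction commutes with duality, the dual of the product tiling having the same $h$-vector as the product of the two dual tilings; since $h$-vectors add over blocks as a convolution, this expresses $h$ of the dual product through $h(\tau_1^\ast)$ and $h(\tau_2^\ast)$. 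Invoking the palindromicity of $h(\tau_1)$ and $h(\tau_2)$, that is $h(\tau_i^\ast) = h(\tau_i)$, this equals $h$ of the product itself, whence $h(K_1 \times K_2)$ is palindromic. This last step is formal once the convolution bookkeeping and the compatibility of the product with Theorem~\ref{theoduality} are in hand.
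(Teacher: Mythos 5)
Your construction of the tiling follows the paper's own route: order the vertices via the orientation underlying tameness (Proposition \ref{proporder}), glue the staircase triangulations of the cells $\sigma\times\theta$ into a primitive triangulation of $K_1\times K_2$, install the Morse tiling of each block $T_a\times T'_b$ from Theorem \ref{theoproducttiles}, and concatenate lexicographically. One clarification on the role of tameness: the gluing of the staircase triangulations only needs the \emph{order} condition (condition 1 of Definition \ref{deftame}, i.e.\ Proposition \ref{proporder}); what the tameness condition proper (condition 2) buys is that the Morse face of each tile sits on the \emph{maximal} vertices of its simplex, which is exactly Condition $M$ of \S\ref{subseccond} and is what makes Theorem \ref{theoproducttiles} applicable to each block $T_a\times T'_b$. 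Your ``attaching analysis'' paragraph gestures at this but attributes to tameness the face-compatibility of the triangulations, which is the wrong half of the definition.

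The palindromicity step as you state it has a genuine gap. First, there is no global ``dual tiling $\tau^\ast$'': Definition \ref{defdualtiles} dualizes a single basic tile inside its own simplex, and replacing every tile of a tiling by its dual does not in general yield a partition of the complex, so the object whose $h$-vector you want to compare with $h(\tau)$ does not exist. Second, the $h$-vector of the product tiling is the plain \emph{sum} over blocks of the vectors $h(T_a\times T'_b)$, not a convolution; the convolution structure appears only for the $c$-vector, $c(\tau_1)c(\tau_2)$, because critical tiles of the block tilings correspond bijectively to pairs of critical tiles. The correct argument, which is what the paper does, stays entirely blockwise: using pure-dimensionality and the palindromicity of $h(\tau_1)$ and $h(\tau_2)$, match the tiles of $\tau_1$ in pairs of orders $j$ and $n+1-j$ (whose underlying basic tiles are dual up to isomorphism), and likewise for $\tau_2$; the blocks $T_a\times T'_b$ then fall into quadruples, pairs or singletons on which Propositions \ref{propsamemvector} and \ref{propduality} (i.e.\ Theorem \ref{theoduality}) force the summed contribution to the $h$-vector to be palindromic. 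Your instinct to invoke Theorem \ref{theoduality} is right, but it must be applied to these groups of blocks rather than to a dual of the whole tiling.
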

The critical vector of such tilings on $K_1 \times K_2$ is a product of the ones of $\tau_1$ and $\tau_2$ while $\tau_1$, $\tau_2$ are always pure dimensional in the case of triangulated manifolds, see Lemma \ref{lemmapure} and Theorem \ref{theoproduct}. Theorem \ref{theointro5} suffices to deduce Theorems \ref{theointro1} and \ref{theointro2}, for the tilings or shellings on each factor can be chosen to be tame and it has a counterpart which produces $h$-tilings as well, see Theorem \ref{theoproducthtiling}. 
In fact, any Morse tiling or shelling becomes tame after a single barycentric subdivision, see Proposition \ref{propbarycsubdtame}.
We finally provide many examples of Morse shellings throughout the paper, see in particular \S \ref{subsecexamples}.

We recall in section \ref{secprelim} the classical notions of face and $h$-vectors of simplicial complexes, the notions of tilings and shellings defined in \cite{SaWel1, SaWel2} and we introduce the tameness condition needed to get Theorem \ref{theointro5}. We then formulate our main results in section \ref{secmain}, devoting \S \ref{subsecpalindrome} to the palindromic property and Theorem \ref{theointro3}, \S \ref{subsecproducts} to Theorem \ref{theointro5} and \S \ref{subsechandles} to the special case of single tiles and Theorem \ref{theointro4}. We study in \S \ref{sectriangulations} the cartesian products of two simplices together with the shellings of its staircase triangulations. This makes it possible to prove the main results in \S \S \ref{secshellings} and \ref{sectilings}.\\

\textbf{Acknowledgement:}
This work was partially supported by the ANR project MICROLOCAL (ANR-15CE40-0007-01).

\tableofcontents

\section{Preliminaries}
\label{secprelim}

\subsection{Simplicial complexes}
\label{subsecsimplcx}

Let $n$ be a non-negative integer. An $n$-simplex is the convex hull of $n+1$ points affinely independent in some real affine space. A face of a simplex is the convex hull of a subset of its vertices and we call it a facet when it has codimension one in the simplex. The standard $n$-simplex is the convex hull of the standard basis of $\R^{n+1}$. It will be denoted by $\Delta_{[n]}$, or sometimes just by $\Delta_n$, fixing an identification between its vertices and the set of integers $[n] = \{ 0, \dots, n \}$. Likewise, for every subset $J$ of $\{0, \dots , n\}$, we will denote by $\Delta_J$ the face of $\Delta_{[n]}$ whose vertices belong to $J$. A total order on the vertices of any simplex prescribes then an affine isomorphism with the standard simplex of the corresponding dimension.

A finite simplicial complex $K$ is a finite collection of simplices which contains all faces of its simplices and such that the intersection of any two simplices in this collection is a face of each of them, see~\cite{Munk, EilSteen}. The dimension of such a complex is the maximal dimension of its simplices and it is said to be pure $n$-dimensional if all the simplices that are maximal with respect to the inclusion are of dimension $n$. Such a simplicial complex $K$ inherits a topology and the underlying topological space is usually denoted by $\vert K \vert$, see \cite{Munk, EilSteen}. When it gets homeomorphic to some manifold, any such homeomorphism is called a triangulation of the manifold. 

The face vector or $f$-vector of an $n$-dimensional finite simplicial complex $K$ is the vector $f(K) = (f_{-1} (K), f_{0} (K), \dots , f_n (K))$, where for every $j \in \{0, \dots , n\}$, $f_j (K)$ denotes the number of $j$-simplices of $K$ while $f_{-1} (K) = 1$ counts the empty set. Likewise, the $h$-vector $h(K) = (h_{0} (K), \dots , h_{n+1} (K))$ of $K$ is defined by the relation
$\sum_{i=0}^{n+1} h_i (K) X^{n+1-i} = \sum_{i=0}^{n+1} f_{i-1} (K) (X-1)^{n+1-i} $, see \cite{McMu, Stan1, Fult, Z}. 

\begin{ex}
\label{exhvect}
The boundary of a simplex is homeorphic to a sphere. Its $h$-vector equals $(1, \dots , 1)$.
\end{ex}

Let us finally recall that a finite simplicial complex is said to be shellable iff there exists an order $\sigma_1, \dots , \sigma_N$ of its maximal simplices such that for every $i \in \{2, \dots, N \}$, $\sigma_i \cap \big( \cup_{j=1}^{N-1} \sigma_j \big)$ is non-empty of pure dimension $\dim \sigma_i -1$, see \cite{Koz, Z} for instance. This means that the simplices $\sigma_1, \dots , \sigma_N$ are not proper faces of any other simplex in $K$ and that any simplex in $\sigma_i \cap \big( \cup_{j=1}^{N-1} \sigma_j \big)$ is a face of a $(\dim \sigma_i -1)$-dimensional one in this intersection. It is convenient for us to allow this intersection for being empty, so that a shelling for us need not be connected, see Remark $2.16$ of \cite{SaWel2} and \S \ref{subsecMorseshel}.

\subsection{Morse shellings}
\label{subsecMorseshel}

We now recall the notions of tilings and shellings introduced in \cite{SaWel1, SaWel2}.

\begin{defi}
\label{defbasictile}
A basic tile of dimension $n$ and order $k \in \{0, \dots , n+1\}$ is an $n$-simplex deprived of $k$ of its facets. 
\end{defi}

Two basic tiles of same dimension and order are isomorphic to each other via some affine isomorphism. We denote by $T^n_k = \Delta_{[n]} \setminus \cup_{j=0}^{k-1} \Delta_{[n] \setminus \{j \}}$ the standard basic tile of dimension $n$ and order $k$, compare \cite{SaWel1}.
\begin{ex}
1) The open (resp. closed) $n$-simplex is the basic tile of dimension $n$ and order $n+1$ (resp. $0$).

2) Figure \ref{fig2tiles} depicts the four isomorphism classes of basic tiles in dimension two.
 \begin{figure}[h]
   \begin{center}
    \includegraphics[scale=1]{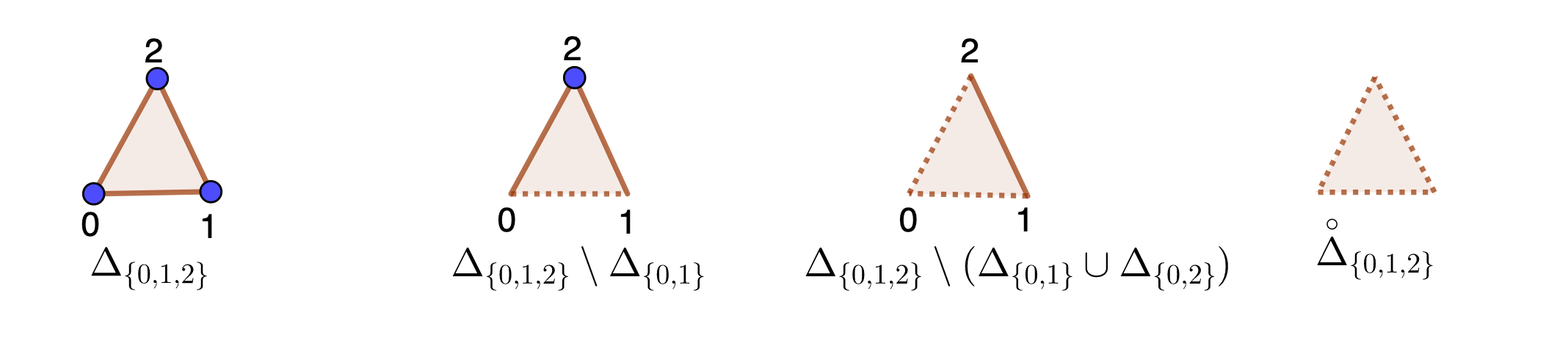}
    \caption{The basic tiles in dimension two.}
    \label{fig2tiles}
      \end{center}
 \end{figure}
\end{ex}

\begin{defi}[Definition $2.4$ of \cite{SaWel2}]
\label{defMorsetile}
A Morse tile of dimension $n$ and order $k \in \{0, \dots , n+1\}$ is an $n$-simplex $\sigma$ deprived of $k$ of its facets together with, if $k \geq 1$, a possibly empty face $\mu$ of higher codimension. The simplex $\sigma$ (resp. $\sigma$ deprived of the $k$ facets) is called the underlying simplex (resp. basic tile), while $\mu$ is called its Morse face. 
\end{defi}

When $k\geq 1$, the dimension of $\mu$ ranges between $k-1$ and $n-2$ and the underlying basic tile has a unique face of dimension $k-1$, see \cite{SaWel2}. Any basic tile is Morse and a Morse tile is said to be critical of index $k$ iff it is of order $k$ and its Morse face has minimal dimension $k-1$, while a closed simplex is critical of index zero. The other Morse tiles are said to be regular. We sometimes denote by $C^n_k$ (resp. $T^{n,l}_k$) a $n$-dimensional critical tile of index $k$ (resp. a $n$-dimensional Morse tile of order $k$ with $l$-dimensional Morse face), so that $C^n_k =T^{n,k-1}_k$. They are all isomorphic to each other via some affine isomorphism. 

\begin{defi}[Definition $2.8$ of \cite{SaWel2}]
\label{defMorsetileable}
A subset $S$ of the underlying topological space $\vert K \vert$ of a finite simplicial complex $K$ is said to be Morse tileable iff it admits a partition by Morse tiles such that for every $j \geq 0$, the union of tiles of dimension $\geq j$ is closed in $S$.
Such a partition $\tau$ is called a Morse tiling and the closure of $S$ in $K$ is called the underlying simplicial complex. 
\end{defi}

When the tiling uses only basic tiles, it is called an $h$-tiling, see \cite{SaWel1, SaWel2}. Of special interest is the case $S = \overline{S}$ where a finite simplicial complex is Morse tiled, but Definition \ref{defMorsetileable} is more general and includes sets such as the triangulated handles $\stackrel{\circ}{\Delta}_k \times \Delta_{n-k}$ of Theorem \ref{theointro4}.
The dimension of a tileable subset is the dimension of the underlying simplicial complex, that is the maximal dimension of the tiles in any Morse tiling. When all tiles have same dimension, the tiling is said to be pure dimensional. This is always the case on compact triangulated manifolds. Indeed,

\begin{lemma}
\label{lemmapure}
Any Morse tiling on a compact connected triangulated manifold is pure dimensional. 
\end{lemma}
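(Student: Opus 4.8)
The plan is to argue by contradiction on the minimal dimension of a tile, using the closedness clause of Definition \ref{defMorsetileable} to produce an open ``stratum'' made of the lowest-dimensional tiles, which a manifold of higher dimension cannot contain. Write $n = \dim K$. Because $|K|$ is a connected manifold it has constant local dimension, so the first thing I would record is that $K$ is a pure $n$-dimensional simplicial complex, i.e. every simplex is a face of an $n$-simplex; consequently the union of the relative interiors $\mathring{\sigma}$ of the $n$-simplices $\sigma$ of $K$ is dense in $|K|$. Let $\tau$ be the Morse tiling and let $d$ be the least dimension of a tile of $\tau$. Since the maximal dimension of a tile equals $\dim K = n$ by the convention recalled just before the lemma, it suffices to prove $d = n$, as this forces every tile to have dimension $n$. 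Suppose instead $d < n$. By Definition \ref{defMorsetileable} the union $F_{d+1}$ of the tiles of dimension $\geq d+1$ is closed in $|K|$, while the union of all tiles is $|K|$; hence $U := |K| \setminus F_{d+1}$, the union of the tiles of dimension exactly $d$, is open in $|K|$, and it is nonempty because $d$ is attained.

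The key geometric step is then to exhibit a point of $U$ lying in the interior of a top-dimensional simplex. First I would use one elementary feature of Morse tiles: by Definition \ref{defMorsetile} a tile is its underlying simplex deprived of some facets and of a Morse face, all of which are \emph{proper} faces; therefore each tile $t$ satisfies $\mathring{\sigma} \subseteq t \subseteq \sigma$, where $\sigma$ is its underlying simplex. Since $U$ is open, nonempty, and the open $n$-simplices are dense, $U$ meets $\mathring{\sigma_0}$ for some $n$-simplex $\sigma_0$; choose such a point $y$. On the one hand $y$ lies in a tile $t$ of dimension $d$. On the other hand, writing $\sigma$ for the underlying simplex of $t$, one has $y \in t \subseteq \sigma$, and the only simplex of $K$ containing the interior point $y \in \mathring{\sigma_0}$ is $\sigma_0$ itself (its cofaces have dimension $\geq n$, hence equal $\sigma_0$); thus $\sigma = \sigma_0$ and $\dim t = \dim \sigma_0 = n$. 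This yields $d = n$, contradicting $d < n$, and completes the argument.

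I expect the only delicate points to be bookkeeping rather than deep: justifying cleanly that a triangulation of a connected manifold is pure $n$-dimensional, so that the open top simplices are dense, and verifying that the tile through an interior point $y$ of $\sigma_0$ must have $\sigma_0$ as underlying simplex; both reduce to the standard fact that the simplices of $K$ through $y$ are exactly the cofaces of $\sigma_0$. Connectedness enters precisely to guarantee a single dimension $n$ throughout $|K|$, since otherwise a lower-dimensional component would directly produce lower-dimensional tiles, while the closedness condition on the tiling is what upgrades ``lowest-dimensional tiles'' into an \emph{open} subset, which is the entire leverage against the manifold hypothesis.
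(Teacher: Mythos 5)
Your argument is correct and rests on exactly the same two ingredients as the paper's proof: the closedness clause of Definition \ref{defMorsetileable} and the purity of a triangulation of a connected manifold, combined with the observation that a point interior to a top-dimensional simplex can only lie in a tile whose underlying simplex is that top simplex. The paper runs this top-down (the $n$-dimensional tiles cover all open $n$-simplices, their union is closed, hence contains all closed $n$-simplices, hence all of $K$), whereas you run it bottom-up by contradiction (the lowest-dimensional tiles form a nonempty open set, which must meet a dense open $n$-simplex); this is the same argument in contrapositive packaging, not a genuinely different route.
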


\begin{proof}
Let $n$ be the dimension of the triangulated manifold $K$ and let $\tau$ be any Morse tiling on $K$. Then, the $n$-dimensional tiles of $\tau$ cover all open $n$-dimensional simplices of $K$ and by Definition \ref{defMorsetileable}, their union is closed in $K$, so that it contains all closed $n$-dimensional simplices of $K$ as well. Since $K$ is a compact connected triangulated manifold,
the latter is $K$ itself.
\end{proof}

\begin{defi}
\label{defmvector}
The $h$-vector (resp. $c$-vector) of a $n$-dimensional Morse tiling $\tau$ is the vector $h(\tau) = (h_{0} (\tau), \dots , h_{n+1} (\tau))$ (resp. $c(\tau) = (c_{0} (\tau), \dots , c_{n} (\tau))$)
whose entries $h_{k} (\tau)$, $k \in \{0, \dots , n+1\}$ (resp.  $c_{k} (\tau)$, $k \in \{0, \dots , n\}$), are the number of tiles of order $k$ (resp. critical tiles of index $k$) used by $\tau$.
\end{defi}

In particular, $h_0 (\tau) = c_0 (\tau) $ and $ h_{n+1} (\tau) = c_{n} (\tau)$. 
Recall that the $h$-vector $h(\tau_K)$ of any $h$-tiling $\tau_K$ of a finite simplicial complex $K$ coincides with the $h$-vector of $K$ as soon as $h_0 (\tau_K) = 1$ and in any cases, two $h$-tilings $\tau_K$ and $\tau'_K$ of $K$ have same $h$-vector as soon as $h_0(\tau_K) = h_0 (\tau'_K)$, by Theorem $4.9$ and Corollary $4.10$ of \cite{SaWel1}.

\begin{ex}
The boundary of an $n$-simplex admits $h$-tilings using exactly one $(n-1)$-dimensional tile of each order. 
\end{ex}

The $c$-vector, or critical vector, of a Morse tiling encodes the Euler characteristic of the tiled simplicial complex. Indeed,

\begin{lemma}
\label{lemmaEuler}
Let $K$ be an $n$-dimensional finite simplicial complex equipped with a Morse tiling $\tau_K$. Then, its Euler characteristic satisfies $\chi ( K) = \sum_{k=0}^n (-1)^k c_k (\tau_K)$.
\end{lemma}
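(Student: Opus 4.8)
The plan is to read the Euler characteristic off the standard simplicial formula $\chi(K) = \sum_{j=0}^{n} (-1)^j f_j(K)$ and to rewrite its right-hand side as the sum over all (relative interiors of) simplices $s$ of $K$ of the sign $(-1)^{\dim s}$. Since $\tau_K$ is a partition of $|K|$ by Morse tiles and each tile is, by Definition \ref{defMorsetile}, a union of relative interiors of faces of its underlying simplex, every open simplex of $K$ belongs to exactly one tile. Hence $\chi(K) = \sum_t e(t)$, where the sum runs over the tiles $t$ of $\tau_K$ and $e(t) := \sum_{s \subseteq t} (-1)^{\dim s}$ is the signed count of open simplices contained in $t$. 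The whole statement then reduces to computing $e(t)$ for a single Morse tile.

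The key step is the claim that $e(t) = (-1)^k$ when $t$ is critical of index $k$ and $e(t) = 0$ when $t$ is regular. To establish it I would coordinatize $t$ as $T^{n,l}_k = \Delta_{[n]} \setminus (\cup_{j=0}^{k-1}\Delta_{[n]\setminus\{j\}} \cup \mu)$ with Morse face $\mu = \Delta_M$, where $\{0,\dots,k-1\} \subseteq M$ and $|M| = l+1$. An open face $\Delta_J^\circ$ then lies in $t$ exactly when $\{0,\dots,k-1\} \subseteq J$ and $J \not\subseteq M$, so that $e(t)$ is the difference of two alternating sums over such $J$. Writing $J = \{0,\dots,k-1\}\cup A$ and summing over the free part $A$, each of these sums collapses to a power of $(1-1)$, namely $(-1)^{k-1}(1-1)^{n-k+1}$ for the first and $(-1)^{k-1}(1-1)^{l+1-k}$ for the second. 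The first vanishes for $k \le n$ and the second vanishes as soon as $l \ge k$; this already yields $e(t)=0$ for every regular tile with non-empty Morse face, while for the critical value $l=k-1$ the second sum equals $(-1)^{k-1}$ and gives $e(t)=(-1)^k$. A direct check of the remaining shapes, namely the half-open basic tiles with empty Morse face (contribution $0$), the closed simplex $k=0$ and the open simplex $k=n+1$, closes the computation in all cases. Summing over all tiles yields $\sum_t e(t) = \sum_{k=0}^n (-1)^k c_k(\tau_K)$, which is the desired equality.

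The main obstacle is bookkeeping rather than conceptual: one must handle the two boundary conventions correctly, that is, recognize the closed simplex as the critical tile of index $0$ and the open simplex, of order $n+1$, as the critical tile of index $n$ through the identification $h_{n+1}(\tau)=c_n(\tau)$, so that in each critical case the exponent in $(-1)^k$ really is the index. One must also read ``deprived of the face $\mu$'' as removal of the closed face, so that the faces of the basic tile contained in $\mu$ are precisely those subtracted in the second alternating sum. Once the per-tile contribution $e(t)$ is pinned down, additivity of $(-1)^{\dim s}$ over the partition does the rest, and no hypothesis on $K$ beyond being Morse tiled is required.
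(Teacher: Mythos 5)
Your proof is correct and follows essentially the same route as the paper: compute $\chi(K)$ as the alternating sum of $(-1)^{\dim s}$ over the open simplices of $K$, group the terms according to the partition into Morse tiles, and observe that each regular tile contributes $0$ while a critical tile of index $k$ contributes $(-1)^k$. The only difference is that the paper cites Lemma 2.5 of \cite{SaWel2} for this per-tile contribution, whereas you verify it directly via the collapse of the two alternating sums to $(-1)^{k-1}(1-1)^{n-k+1}$ and $(-1)^{k-1}(1-1)^{l+1-k}$, handling the closed and open simplices separately.
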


\begin{proof}
Let us equip the underlying topological space $\vert K \vert$ with its structure of cellular complex given by open simplices and compute $\chi ( K)$ as the alternate sum of the dimensions of its cellular chain complexes.
By Lemma $2.5$ of \cite{SaWel2}, the contribution of each regular Morse tile to this count vanishes while a critical tile of index $k$ contributes as $(-1)^k $. Hence the result. 
\end{proof}

We finally recall the definition of Morse shellability given in \cite{SaWel2}. 

\begin{defi}[Definition $2.14$ of \cite{SaWel2}]
\label{defMorseshellable}
A subset $S$ of  the underlying topological space $\vert K \vert$ of a finite simplicial complex $K$ is said to be Morse shellable iff it admits a Morse tiling together with a filtration $\emptyset \subset S_1 \subset \dots \subset S_N = S$ of Morse tiled subsets such that for every 
$i \in \{2, \dots, N \}$, $S_i \setminus S_{i-1}$ is a single tile of the tiling. 
\end{defi}
A Morse tiled subset of $S$ is a union of tiles which is closed in $S$, see Definition $9$ of \cite{SaWel2}. When the tiling uses only basic tiles, this notion of Morse shelling coincides with the classical notion of shelling, without the non-emptyness assumption though, see \S \ref{subsecsimplcx}, Theorem $2.15$  and Remark $2.16$ of \cite{SaWel2}. A finite simplicial complex, when equipped with a Morse tiling, carries discrete vector fields which are compatible with the tiling and in the case of a Morse shelling, any of these is the gradient vector field of a discrete Morse function in the sense of R. Forman \cite{For1}, whose critical points are in one-to-one correspondence, preserving the index, with the critical tiles of the tiling, see Theorem $1.2$ of \cite{SaWel2}. The Betti numbers of a Morse shelled finite simplicial complex thus get bounded from above by the number of critical tiles of the corresponding index of the shelling, see Corollary $1.5$ of \cite{SaWel2}. 

\subsection{Tame Morse shellings}
\label{subsectame}

In order to get a Morse shelling on the product of two Morse shelled complexes, we need the shellings to satisfy some tameness condition which we now introduce. 

\begin{prop}
\label{proporder}
Let $K$ be a finite simplicial complex whose edges are oriented. Then, the following properties are equivalent:
\begin{enumerate}
\item There is no triangle in $K$ whose boundary is an oriented one-cycle. 
\item For every simplex of $K$, the relation "$x \leq y$ iff $x=y$ or the edge between $x$ and $y$ is oriented from $x$ to $y$" defines a total order on its vertices. 
\end{enumerate}
Moreover, under these conditions, the inclusion of faces define increasing maps, that is they preserve the order on the vertices. 
\end{prop}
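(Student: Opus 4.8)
The plan is to treat the relation in property (2) as a tournament on the vertices of each simplex and to recognize that the only non-trivial order axiom at stake is transitivity, the remaining ones being automatic. Indeed, fix a simplex $\sigma$ of $K$ and two distinct vertices $x,y$ of $\sigma$. Since $K$ is a simplicial complex and $\sigma \in K$, the edge $\{x,y\}$ is a face of $\sigma$ and hence belongs to $K$, so it carries a well-defined orientation. This makes the relation total, any two vertices being comparable, and antisymmetric, a single edge not being orientable both ways, while reflexivity is built into the definition "$x=y$ or $\dots$". Thus both implications reduce to statements about the three edges of a single triangle.

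For the implication $(1) \Rightarrow (2)$, I would verify transitivity directly. Suppose $x \leq y$ and $y \leq z$ for three distinct vertices $x,y,z$ of $\sigma$. These span a $2$-face of $\sigma$, which is a triangle of $K$, with its edges oriented from $x$ to $y$ and from $y$ to $z$. Were the third edge oriented from $z$ to $x$, the boundary of this triangle would be the oriented one-cycle $x \to y \to z \to x$, contradicting (1). Hence that edge is oriented from $x$ to $z$, that is $x \leq z$, so the relation is transitive and therefore a total order on the vertices of $\sigma$.

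For the converse $(2) \Rightarrow (1)$, I would argue directly. Given any triangle of $K$, its three vertices span a $2$-simplex on which (2) furnishes a total order, say $x < y < z$ after relabelling; the three edges are then oriented from $x$ to $y$, from $y$ to $z$ and from $x$ to $z$. Neither of the two possible cyclic orientations $x \to y \to z \to x$ or $x \to z \to y \to x$ of the boundary occurs, since each would force an edge against this order. In particular the boundary is not an oriented one-cycle, which is (1); equivalently, a total order on three elements has a strict maximum, whereas a cyclically oriented triangle has none.

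Finally, for the "moreover" clause, I observe that the order on the vertices of a simplex is defined solely through the orientations of the edges of $K$, independently of the ambient simplex. Hence if $\tau$ is a face of $\sigma$, any two vertices of $\tau$ are compared in $\tau$ exactly as in $\sigma$, so the inclusion $\tau \hookrightarrow \sigma$ preserves the order and is increasing. The argument is elementary throughout; the only point requiring care, and the crux of the equivalence, is the reduction of transitivity on an arbitrary simplex to the single-triangle condition, which uses crucially that every triple of vertices of a simplex of $K$ spans a triangle lying in $K$.
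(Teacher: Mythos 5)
Your proposal is correct and follows essentially the same route as the paper: both implications are reduced to the three edges of a single triangle, with transitivity being the only non-automatic axiom, and the converse handled by noting that a totally ordered triangle cannot have a cyclically oriented boundary. Your explicit treatment of the ``moreover'' clause (the order is defined edge-wise, hence independent of the ambient simplex) is a welcome addition, as the paper merely asserts it.
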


\begin{proof}
The second condition implies the first one by transitivity. Indeed, if $x,y,z$ denote the three vertices of a triangle $\theta$ and if the edges are oriented from $x$ to $y$ and from $y$ to $z$, then
by transitivity of the order, $x \leq z$, so that the edge between $x$ and $z$ cannot be oriented from $z$ to $x$, it would imply $x=z$ by antisymmetry and this order wouldn't be total. 
Conversely, let $\sigma$ be any simplex of $K$, the relation defined in the second property is reflexive by definition and antisymmetric since two different vertices $x,y$ are joined by a unique edge so that $x \leq y$
and $y \leq x$ cannot happen unless $x=y$. Now the transitivity follows from the first property. Indeed, if $x,y$ and $z$ are three different vertices of $\sigma$, then we may assume that the edge between $x$ and $y$ is oriented from $x$ to $y$ and that the edge between $y$ and $z$ is oriented from $y$ to $z$. Let $\theta$ be the face with vertices $x,y,z$. By the first property, the edge between $x$ and $z$ has to be oriented from $x$ to $z$. Hence the transitivity. This order relation is then total since any two vertices of $\sigma$ are connected by an edge. 
\end{proof}

\begin{ex}
\label{exorder}
1) If the vertices of a finite simplicial complex are totally ordered, then this order induces an orientation on every edge, from the minimal vertex to the maximal one, and the conditions of Proposition \ref{proporder} get satisfied. 

2) The boundary $\partial \theta$ of a triangle $\theta$ satisfies the properties of Proposition \ref{proporder} whatever the orientations on its edges are, since it contains no two-simplex. However,
$\theta$ itself equipped with such orientations need not satisfy these properties. 
\end{ex}

The first part of Example \ref{exorder} shows that it is always possible to orient the one-skeleton of a finite simplicial complex $K$ in order to define in a compatible way a total order on the vertices of each of its simplices, turning it into an ordered simplicial complex in the sense of Definition $II. 8.7$ of \cite{EilSteen}.  When $K$ is equipped with a Morse tiling, we would like in addition that for every Morse tile with underlying simplex $\sigma$ and non-empty Morse face $\mu$, the vertices of $\mu$ are the maximal ones among the ones of $\sigma$. Recall that the link $\lk_\sigma (\mu)$ of $\mu$ in $\sigma$ is by definition the convex hull of the vertices of $\sigma \setminus \mu$. We thus would like that the edges between $\lk_\sigma (\mu)$ and $\mu$ are oriented from $\lk_\sigma (\mu)$ to $\mu$, see Figure \ref{figTetra}.

\begin{figure}[h]
   \begin{center}
    \includegraphics[scale=0.3]{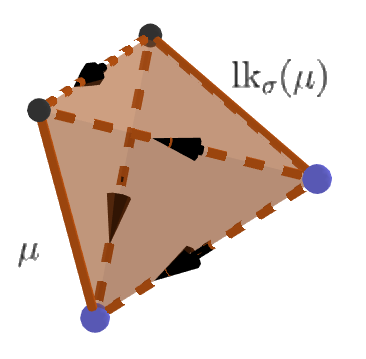}
    \caption{A Morse face $\mu$ in a three-simplex.}
    \label{figTetra}
      \end{center}
 \end{figure}

\begin{defi}
\label{deftame}
The tiling of a Morse tiled set $S$ is said to be tame iff there exists an orientation on the one-skeleton of the underlying simplicial complex $K$ which satisfies the following order and tameness conditions.
\begin{enumerate}
\item There is no triangle in $K$ whose boundary is an oriented one-cycle. 
\item For every Morse tile with underlying simplex $\sigma$ and non-empty Morse face $\mu$, the edges between $\lk_\sigma (\mu)$ and $\mu$ are oriented from $\lk_\sigma (\mu)$ to $\mu$.
\end{enumerate}
\end{defi}
Every $h$-tiling is tame by the first part of Example  \ref{exorder}, since the second condition of Definition \ref{deftame} is then empty and the first one satisfied. In fact, the order condition in Definition \ref{deftame} provides a structure of ordered simplicial complex on $K$ given by Proposition \ref{proporder}, see Definition $II. 8.7$ of \cite{EilSteen}, while the tameness condition requires some compatibility between this structure and the tiling. 

The tameness property gets satisfied by any Morse tiling after one barycentric subdivision for example.

\begin{prop}
\label{propbarycsubdtame}
The first barycentric subdivision of any Morse tiled (resp. shelled) set carries tame Morse tilings (resp. tame Morse shellings) containing the same number of critical tiles with the same indices. 
\end{prop}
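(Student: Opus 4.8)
The plan is to separate the two requirements of Definition \ref{deftame} and dispose of the order condition once and for all. Let $K$ denote the underlying simplicial complex of $S$, and orient the one-skeleton of $\Sd K$ by dimension: orient the edge joining the barycenters $\hat\rho$ and $\hat\rho'$ of two faces $\rho \subsetneq \rho'$ of $K$ from $\hat\rho$ towards $\hat\rho'$. Every simplex of $\Sd K$ is a flag $\rho_0 \subsetneq \cdots \subsetneq \rho_p$, on whose vertices this prescribes the strictly increasing, hence total and acyclic, order given by $\dim$; in particular no triangle of $\Sd K$ bounds an oriented one-cycle, so the order condition of Definition \ref{deftame} holds automatically. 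With this choice the tameness condition becomes purely local: since the vertices of each simplex of $\Sd K$ are ordered by the dimension of the face they carry, the Morse face of a tile consists of maximal vertices exactly when it is the top segment of its flag, i.e. the barycenters of highest dimension. It therefore remains to produce on $\Sd S$ a Morse tiling (resp. shelling) refining $\tau$, whose Morse faces are such top segments and whose critical tiles are in index-preserving bijection with those of $\tau$.

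Next I reduce to a single tile. I subdivide $\tau$ tile by tile: the closure of a Morse tile $t$ is a simplex $\sigma_t$, and a point in the relative interior of a flag-simplex $\rho_0 \subsetneq \cdots \subsetneq \rho_p$ of $\Sd \sigma_t$ lies in the open face carried by $\rho_p$; hence such a flag-simplex meets $t$ iff that open face survives the removal of the facets and of the Morse face defining $t$. This identifies the part of $\Sd \sigma_t$ lying in $t$ as an explicit locally closed piece, determined by its top faces. It thus suffices to equip this piece with a tame Morse tiling (resp. shelling, relative to the already subdivided facets along which $t$ is attached) using a single critical tile, of the same index as $t$ when $t$ is critical and none when $t$ is regular. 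Granting such a local model compatibly on shared faces, the union over all tiles is a Morse tiling of $\Sd S$, and in the shellable case concatenating the local sub-shellings in the order of the filtration $S_1 \subset \cdots \subset S_N$ refines it into a Morse shelling. Since critical tiles are produced locally and matched one-to-one with the tiles of $\tau$, both the critical vector and its indices are preserved.

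The core is therefore the local statement for one Morse tile. In the $h$-tiling case the Morse faces are empty, the tameness condition is vacuous, and one only has to exhibit an $h$-tiling of the subdivided tile, relative to its removed facets, with the prescribed number of closed and open simplices; this is the barycentric subdivision of a relative shellable ball and is handled by the $h$-tileability already available for subdivided basic tiles. The genuinely new case is a non-empty Morse face $\mu$, where I would argue by induction on $n = \dim \sigma_t$ using the cone decomposition $\Sd \sigma_t = \hat\sigma_t * \Sd(\partial \sigma_t)$. The barycenter $\hat\sigma_t$ carries the unique maximal height, so joining the inductively constructed tiling of the base with the cone point keeps every Morse face a top segment, as tameness demands, and lets one push the single critical tile of index $k$ from the boundary model into the cone while the remaining cells are filled by regular tiles. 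One checks that the contributions to the cellular, compactly supported Euler characteristic force exactly one critical tile of index $k$, in agreement with Lemma \ref{lemmaEuler}, and that the construction restricts compatibly to the subdivided facets so the local models glue.

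The main obstacle is precisely this local construction: one must simultaneously realise exactly one critical tile of the correct index $k$ inside $\Sd \sigma_t$ (and none when $t$ is regular), and keep every Morse face an upper segment of its flag so that the fixed height orientation makes the tiling tame. These two demands pull against each other, since tameness rigidifies where Morse faces may sit, whereas minimising critical tiles is what ordinarily requires freedom in choosing them; reconciling them is where the cone recursion and the precise bookkeeping of which top faces survive must be carried out with care. The remaining verifications — the closure-by-dimension condition of Definition \ref{defMorsetileable}, the compatibility of local models along shared subdivided facets, and, in the shellable case, that the concatenation of local sub-shellings is again a shelling — are routine once the local model is in place.
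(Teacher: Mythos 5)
Your proof has a genuine gap at its central step. The paper's own proof does not construct the Morse tiling of $\Sd(S)$ at all: it invokes Corollary $2.21$ of \cite{SaWel2}, which already provides Morse tilings (resp.\ shellings) on the first barycentric subdivision with the same number of critical tiles of the same indices, so the only new content of the proposition is the verification of tameness for the canonical orientation of the one-skeleton of $\Sd(K)$ by the face relation. You instead set out to rebuild the subdivided tiling from scratch, reducing to a local model on $\Sd(\sigma_t)\cap t$ for each Morse tile $t$ --- a relative Morse tiling with exactly one critical tile of index $k$ when $t$ is critical and none when $t$ is regular, all of whose Morse faces sit at the extremal end of their flags. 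You then explicitly identify this local construction as ``the main obstacle'' and do not carry it out: the cone recursion $\Sd(\sigma_t)=\hat{\sigma}_t * \Sd(\partial\sigma_t)$ is only sketched, and the Euler-characteristic count via Lemma \ref{lemmaEuler} can at best confirm the alternating sum of critical contributions, not force the number or the placement of critical tiles. Since this local model is precisely the substance of the existence claim, the proof is incomplete where it matters most.

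A secondary point: you orient the one-skeleton of $\Sd(K)$ from barycenters of smaller faces towards barycenters of larger ones, and accordingly require Morse faces to be \emph{top} segments of their flags. In the construction the paper relies on, the Morse face of a subdivided tile is $\Sd(\mu)\cap\sigma'$, whose vertices are barycenters of faces of the original Morse face $\mu$; these occupy the low-dimensional end of the maximal flag, so the orientation must run from barycenters of larger faces to barycenters of smaller ones for those vertices to be the maximal ones. Your convention is not wrong in the abstract --- you are free to build a different subdivided tiling --- but it is one more property your unconstructed local model would have to deliver, and it is the opposite of what the natural barycentric subdivision of a Morse tile produces.
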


\begin{proof}
Let $S$ be a Morse tiled (resp. Morse shelled) set and let $K$ be its underlying simplicial complex. By Corollary $2.21$ of \cite{SaWel2}, the first barycentric subdivision $\Sd (S)$ of $S$ carries Morse tilings (resp. shellings) having the same number of critical tiles with the same indices. Its underlying simplicial complex is $\Sd (K)$. Now, the one-skeleton of the latter is canonically oriented. Indeed, its vertices are by definition the barycenters $\hat{\sigma}$ of the simplices $\sigma$ of $K$ while an edge connects two vertices $\hat{\sigma}$ and $\hat{\tau}$ iff $\sigma$ is a face of $\tau$ or vice-versa, see  \cite{Munk}. Let us orient such an edge from $\hat{\sigma}$ to $\hat{\tau}$ iff $\tau$ is a face of $\sigma$. The order condition of Definition \ref{deftame} gets satisfied by this order on the one-skeleton of $\Sd (K)$.
We have to prove that the tameness condition gets satisfied as well. Let $T'$ be a Morse tile of $\Sd (S)$ with underlying simplex $\sigma'$ and non-empty Morse face $\mu'$. By construction, there exists a Morse tile $T$ of $S$, with underlying simplex $\sigma$ and non-empty Morse face $\mu$ such that $\sigma' \subset \Sd (\sigma)$ and $\mu' = \Sd (\mu) \cap \sigma'$, see \cite{SaWel2}.
There exists then a maximal flag $\sigma_0 \subset \sigma_1 \subset \dots \subset \sigma_n = \sigma$ such that the vertices of $\sigma'$ are the barycenters $\hat{\sigma}_i$ of $\sigma_i$, $i \in \{ 0, \dots , n \}$, where $n$ denotes the dimension of $T'$. By maximal flag we mean that for every $0 \leq i \leq n$, $\dim \sigma_i = i$ and  for every $0 \leq i < j \leq n$, $\sigma_i $ is a face of $\sigma_j$. Now such a vertex $\hat{\sigma}_j $ belongs to $\mu'$ iff $\sigma_j $ is a face of $\mu$ and this then implies that $\hat{\sigma}_i \in \mu'$ for all $i \geq j$, since then $\sigma_i \subset \sigma_j$.
The vertices of $\sigma'$ that belong to the Morse face $\mu'$ are thus the maximal ones with respect to this canonical order. Hence the result. 
\end{proof}

We may finally provide a criterium which ensures that a Morse tiling is tame. 

\begin{prop}
\label{proptame}
A Morse tiling is tame if $\sigma \cap \mu' = \sigma' \cap \mu$ for every tiles $T, T'$ with underlying simplices $\sigma, \sigma'$ and non-empty Morse faces $\mu, \mu'$. 
\end{prop}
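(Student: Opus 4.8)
The plan is to reduce the tameness property to the construction of a single total order on the vertices of the underlying simplicial complex $K$. Concretely, I would look for a total order $<$ on the vertices of $K$ with the property that for every tile with underlying simplex $\sigma$ and non-empty Morse face $\mu$, every vertex of $\mu$ is strictly larger than every vertex of $\lk_\sigma(\mu)$. Orienting each edge of $K$ from its smaller to its larger endpoint then produces an orientation which satisfies the order condition of Definition \ref{deftame} by the first part of Example \ref{exorder} (an orientation induced by a total order has no triangle bounding an oriented one-cycle), and which satisfies the tameness condition by construction, since $\lk_\sigma(\mu)$ is the convex hull of the vertices of $\sigma \setminus \mu$ and every such vertex is then below every vertex of $\mu$.

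To build such an order, I would introduce the directed graph $G$ on the vertex set of $K$ carrying an arrow $x \to y$ whenever there exists a tile with underlying simplex $\sigma$ and non-empty Morse face $\mu$ such that $x$ is a vertex of $\sigma$ not lying in $\mu$ and $y$ is a vertex of $\mu$. Any total order for which $x < y$ whenever $x \to y$ will then do, so it suffices to prove that $G$ is acyclic and to take any linear extension of the strict partial order generated by its arrows; orienting every edge of $K$ accordingly yields the desired orientation.

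The heart of the argument, and the only place where the hypothesis $\sigma \cap \mu' = \sigma' \cap \mu$ is used, is to show that $G$ contains no directed path of length two. Since an arrow $x \to y$ forces $x \neq y$ (as $x \notin \mu$ while $y \in \mu$), the graph $G$ has no loop, so the absence of length-two paths already rules out every directed cycle and establishes acyclicity. To prove this, suppose an arrow $x \to y$ is realized by a tile with underlying simplex $\sigma$ and Morse face $\mu$, and an arrow $y \to z$ by a tile with underlying simplex $\sigma'$ and Morse face $\mu'$. Then $y$ is a vertex of $\mu$, while $y$ is a vertex of $\sigma'$ not lying in $\mu'$. Hence $y \in \sigma' \cap \mu$, which by hypothesis equals $\sigma \cap \mu'$, forcing $y \in \mu'$ and contradicting $y \notin \mu'$. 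I expect this short verification to be the main, and essentially the only, obstacle: everything else is formal, the passage from the acyclic graph $G$ to a total order being a standard topological sort and the two conditions of Definition \ref{deftame} following immediately from the construction.
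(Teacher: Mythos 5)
Your proof is correct. It is close in spirit to the paper's, but the packaging is genuinely different and worth comparing. The paper sets $L$ to be the union of all Morse faces of the tiles, observes that the hypothesis forces $\sigma \cap L = \mu$ for every tile with underlying simplex $\sigma$ and non-empty Morse face $\mu$, and then orients every edge joining $K \setminus L$ to $L$ towards $L$, choosing arbitrary total orders inside $L$ and inside $K \setminus L$ for the remaining edges; the two conditions of Definition~\ref{deftame} follow as in your last step. This order is exactly one particular linear extension of your constraint digraph $G$: every arrow of $G$ points from a vertex outside $L$ to a vertex of $L$, precisely because $\sigma \cap L = \mu$. In fact your key lemma is equivalent to the paper's key identity: a vertex $y$ is the head of some arrow of $G$ iff $y$ lies in some Morse face, i.e.\ $y \in L$, and it is the tail of some arrow iff $y$ is a vertex of $\sigma' \setminus \mu'$ for some tile, so forbidding directed paths of length two is the same as asserting $\sigma' \cap L \subseteq \mu'$ for every tile. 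What your formulation buys is flexibility and a slightly stronger statement: any linear extension works, and the argument shows that tameness already follows from the a priori weaker condition that $G$ (or even just its two-step reachability) is degenerate, rather than from the full displayed hypothesis. What the paper's formulation buys is brevity and a completely explicit order. Both arguments are complete, and your verification that the absence of loops plus the absence of two consecutive arrows kills all directed cycles is sound.
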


For example, the condition in Proposition \ref{proptame} gets satisfied if all tiles with non-empty Morse faces have disjoint underlying simplices. 

\begin{proof}
Let $K$ be the underlying simplicial complex of such a Morse tiled set $S$ and let $L$ be the union of all Morse faces of its tiles. Let us fix a total order on the vertices of $L$ and a total order on the vertices of $K \setminus L$. 
Then, all edges of $K$ whose vertices are both in $L$ or both outside $L$ get oriented by these total orders from the minimal vertex to the maximal one. We finally orient the edges between $K \setminus L$ and $L$ from $K \setminus L$ to $L$. These orientations satisfy the properties of Proposition \ref{proporder}, compare Example \ref{exorder}. Moreover, every tile $T$ with underlying simplex $\sigma$ and non-empty Morse face $\mu$ satisfies $\sigma \cap L = \mu$ by hypothesis
and its edges between $\lk_\sigma (\mu)$ and $\mu$ are oriented from $\lk_\sigma (\mu)$ to $\mu$ by construction, so that the tiling is indeed tame by definition. 
\end{proof}

\section{Main results}
\label{secmain}

\subsection{Palindromic vectors}
\label{subsecpalindrome}

Let $n$ be a non-negative integer, the involution $k \in \{ 0 , \dots , n \} \mapsto n-k \in \{ 0 , \dots , n \} $ induces the automorphism $v = (v_0 , \dots , v_n) \in \R^n \mapsto \check{v} = (v_n , \dots , v_0) \in \R^n$.

\begin{defi}
\label{defpalind}
A vector $v$ of $\R^n$ is said to be palindromic iff $\check{v} = v$.
\end{defi}

For example, the real Betti numbers of closed connected oriented manifolds define palindromic vectors by Poincaré duality, see \cite{Munk, BottTu}. The $h$-vectors of convex polytopes are palindromic as well, see \cite{McMu, Stan, Fult, Z}. We are going to prove that $h$-vectors of Morse tilings are likewise often palindromic. 

\begin{theo}
\label{theohvector}
Let $K$ be an $n$-dimensional simplicial complex homeomorphic to a closed manifold and equipped with an $h$-tiling $\tau$. Then,
\begin{enumerate}
\item If $n$ is odd, the $h$-vector of $\tau$ is palindromic.
\item If $n$ is even, for every $i \in \{ 0 , \dots , n+1 \}$, 
$$h_i (\tau) - h_{n+1-i} (\tau)  = (-1)^i {n+1 \choose i} (h_0 (\tau) - h_{n+1} (\tau) ).$$
\end{enumerate}
\end{theo}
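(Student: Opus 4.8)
The plan is to reduce the statement to the classical Dehn--Sommerville relations for the honest $h$-vector $h(K)$ of the complex, after first expressing $h(\tau)$ in terms of $h(K)$ and of the single number $h_0(\tau)$. The bridge between the two is a face-counting identity coming from the fact that $\tau$ partitions $\vert K\vert$: by Lemma \ref{lemmapure} (applied on each connected component) the tiling is pure $n$-dimensional, so each open $j$-simplex of $K$ lies in exactly one tile, and a basic tile of order $k$ retains precisely the open faces $\Delta_S^\circ$ with $\{0,\dots,k-1\}\subseteq S$, that is $\binom{n+1-k}{j+1-k}$ open $j$-faces. Summing over all tiles yields $f_{\ell-1}(K)=\sum_{k=0}^{n+1}\binom{n+1-k}{\ell-k}\,h_k(\tau)$ for every $\ell\in\{1,\dots,n+1\}$.

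First I would observe that the combinatorial $h$-vector satisfies the very same system: substituting $X=Y+1$ in the defining relation of $h(K)$ gives $f_{\ell-1}(K)=\sum_{k}\binom{n+1-k}{\ell-k}h_k(K)$. Hence $h(\tau)-h(K)$ lies in the kernel of the $(n+1)\times(n+2)$ matrix $\big(\binom{n+1-k}{\ell-k}\big)$. That matrix has full row rank $n+1$ (its columns $0,\dots,n$ form a unitriangular block), so its kernel is a line; using the Vandermonde-type identity $\binom{n+1}{k}\binom{n+1-k}{\ell-k}=\binom{n+1}{\ell}\binom{\ell}{k}$ one checks that the kernel is spanned by $w=\big((-1)^k\binom{n+1}{k}\big)_{k=0}^{n+1}$. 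Reading off the zeroth coordinate (where $w_0=1$) then gives the key identity $h_k(\tau)=h_k(K)+\big(h_0(\tau)-1\big)(-1)^k\binom{n+1}{k}$ for all $k$.

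It then remains to combine this with the Dehn--Sommerville--Klee relations for the closed manifold $K$, namely $h_{n+1-i}(K)-h_i(K)=(-1)^i\binom{n+1}{i}\big(h_{n+1}(K)-h_0(K)\big)$ (see the references in the introduction). Subtracting the two expressions for $h(\tau)$ gives $h_i(\tau)-h_{n+1-i}(\tau)=\big[h_i(K)-h_{n+1-i}(K)\big]+\big(h_0(\tau)-1\big)\big(w_i-w_{n+1-i}\big)$, where $w_i-w_{n+1-i}=(-1)^i\binom{n+1}{i}\big(1-(-1)^{n+1}\big)$. When $n$ is odd this defect vanishes, and the Dehn--Sommerville term vanishes as well (there $h_{n+1}(K)=h_0(K)$), which yields the palindromicity of part (1). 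When $n$ is even one has $w_i-w_{n+1-i}=2(-1)^i\binom{n+1}{i}$; feeding in the key identity at $k=n+1$ (so that $h_{n+1}(\tau)=h_{n+1}(K)-(h_0(\tau)-1)$) rearranges the two contributions exactly into $(-1)^i\binom{n+1}{i}\big(h_0(\tau)-h_{n+1}(\tau)\big)$, which is part (2).

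The main obstacle is the second paragraph: pinning down the precise one-parameter dependence of $h(\tau)$ on $h_0(\tau)$, i.e. identifying the kernel direction $w$ and verifying the binomial identity that forces $h(\tau)-h(K)$ to be proportional to it. Once this affine relation is in hand, both cases reduce to a bookkeeping of signs built on the classical Dehn--Sommerville relations, with the parity of $n$ entering only through the factor $1-(-1)^{n+1}$.
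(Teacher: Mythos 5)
Your proof is correct, and it rests on the same two pillars as the paper's: the relation between $h(\tau)$ and the face vector of $K$ (Theorem $4.9$ of \cite{SaWel1}, which you re-derive by counting the open faces retained by a basic tile of order $k$ in a pure tiling), and the Dehn--Sommerville relations. The packaging is genuinely different, though. The paper substitutes the tiling relation into Macdonald's functional equation $R_K(-1-X)=(-1)^{n+1}R_K(X)$, expands in the basis $X^i(X+1)^{n+1-i}$, and concludes by linear independence using the identity $1=\sum_i(-1)^i\binom{n+1}{i}X^i(X+1)^{n+1-i}$; your kernel vector $w=\big((-1)^k\binom{n+1}{k}\big)_k$ is precisely the coefficient vector of that identity, and your affine relation $h_k(\tau)=h_k(K)+(h_0(\tau)-1)(-1)^k\binom{n+1}{k}$ is the coefficient-wise form of the paper's polynomial manipulation. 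What your route buys is that this affine relation is isolated as a clean, reusable intermediate statement, after which both parts reduce to the classical coefficient-wise (Klee) form of Dehn--Sommerville for $h(K)$ with no further generating-function work; what it costs is that you must import that form of Dehn--Sommerville as a black box (it is indeed what \cite{Mac}, \cite{Klain} or Theorem $1.1$ of \cite{SaWelDCG} provide), whereas the paper derives everything it needs from Macdonald's single functional equation together with Lemma \ref{lemmaEuler}. Two harmless details: the unitriangular square block of $\big(\binom{n+1-k}{\ell-k}\big)_{\ell\ge 1}$ sits on the columns $k=1,\dots,n+1$ rather than $k=0,\dots,n$ (either way the row rank is $n+1$), and your appeal to Lemma \ref{lemmapure} componentwise is fine since a closed $n$-manifold has all components of dimension $n$. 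The bookkeeping in the even case, turning $h_0(K)-h_{n+1}(K)+2(h_0(\tau)-1)$ into $h_0(\tau)-h_{n+1}(\tau)$ via $h_{n+1}(K)=h_{n+1}(\tau)+h_0(\tau)-1$, checks out.
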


\begin{proof}
By Theorem $4.9$ of \cite{SaWel1}, the $h$-vector of $\tau$ satisfies $\sum_{i=0}^{n+1} h_i (\tau) X^{n+1-i} = \sum_{i=0}^{n+1} f_{i-1} (K) (X-1)^{n+1-i} $ provided one sets $ f_{-1} (K) = h_0 (\tau)$. 
By Theorem $2.1$ of \cite{Mac}, the Dehn-Sommerville relations can be expressed by the relation $R_K (-1-X) = (-1)^{n+1} R_K (X)$, where $R_K (X) = X \sum_{i=0}^n f_i (X) X^i - \chi(K) X$, see also Theorem $1.1$
of \cite{SaWelDCG}. Finally, we know from Lemma \ref{lemmaEuler} that the Euler characteristic of $K$ satisfies $\chi (K) = h_0 (\tau) + (-1)^n h_{n+1} (\tau)$, since the only critical tiles of an $h$-tiling are the open and closed simplices. We deduce that  $R_K (X) = \sum_{i=0}^{n+1} h_i (\tau) X^i (X+1)^{n+1-i} - h_0 (\tau) - \chi (K) X$, so that Macdonald's result \cite{Mac} becomes
\begin{eqnarray*}
\sum_{i=0}^{n+1} \big( h_i (\tau)  - h_{n+1-i} (\tau) \big) X^i (X+1)^{n+1-i} - \big(  h_0 (\tau) - h_{n+1} (\tau) \big) = 0 \text{ if } n \text{ is even and}\\
\sum_{i=0}^{n+1} \big( h_i (\tau)  - h_{n+1-i} (\tau) \big) X^i (X+1)^{n+1-i} - \big(  h_0 (\tau) - h_{n+1} (\tau) \big)(1+2X) = 0 \text{ if } n \text{ is odd.}
\end{eqnarray*}
If $n$ is odd, the Euler characteristic of a closed $n$-dimensional manifold vanishes from Poincaré duality, see \cite{Munk} for example, so that $h_0 (\tau) = h_{n+1} (\tau) $. We thus deduce in this case that
$h (\tau) $ is palindromic.  If $n$ is even, we observe that $1 = \sum_{i=0}^{n+1} (-1)^i {n+1 \choose i} X^i (X+1)^{n+1-i} $ and get the result, since the monomials $(X^i (X+1)^{n+1-i} )_{i \in \{ 0 , \dots , n+1\}}$ are linearly independant over $\R$. 
\end{proof}

\begin{cor}
\label{corpalindromic}
The $h$-vector of any $h$-tiling on a simplicial complex homeomorphic to a closed manifold is palindromic iff its $c$-vector is palindromic. 
\end{cor}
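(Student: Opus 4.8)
The plan is to reduce both palindromic conditions to the single scalar equality $h_0(\tau) = h_{n+1}(\tau)$ and then read off the equivalence from Theorem \ref{theohvector}.

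First I would describe the $c$-vector of an $h$-tiling explicitly. Since an $h$-tiling uses only basic tiles, and among basic tiles the only critical ones are the closed simplices (index $0$, order $0$) and the open simplices (index $n$, order $n+1$), the critical vector has entries $c_0(\tau) = h_0(\tau)$, $c_n(\tau) = h_{n+1}(\tau)$, and $c_k(\tau) = 0$ for every $0 < k < n$. The intermediate entries therefore trivially satisfy $c_k(\tau) = c_{n-k}(\tau)$, so the only nontrivial palindromic constraint is $c_0(\tau) = c_n(\tau)$. Hence $c(\tau)$ is palindromic if and only if $h_0(\tau) = h_{n+1}(\tau)$.

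Next I would invoke Theorem \ref{theohvector} to characterize when $h(\tau)$ is palindromic, treating the two parities separately. In the even case, the displayed identity
$$h_i(\tau) - h_{n+1-i}(\tau) = (-1)^i \binom{n+1}{i}\bigl(h_0(\tau) - h_{n+1}(\tau)\bigr)$$
holds for every $i$; since the binomial coefficients never vanish, $h(\tau)$ is palindromic (all differences zero) if and only if $h_0(\tau) = h_{n+1}(\tau)$. In the odd case, the first part of Theorem \ref{theohvector} already asserts that $h(\tau)$ is palindromic unconditionally, and the vanishing of the Euler characteristic of a closed odd-dimensional manifold (Poincaré duality, via Lemma \ref{lemmaEuler}) forces $h_0(\tau) = h_{n+1}(\tau)$ as well; so both vectors are palindromic regardless.

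Finally I would combine the two observations: in every dimension, $h(\tau)$ is palindromic if and only if $h_0(\tau) = h_{n+1}(\tau)$, which by the first step holds if and only if $c(\tau)$ is palindromic, yielding the corollary. I do not expect a genuine obstacle here, as the argument is essentially bookkeeping on top of Theorem \ref{theohvector}; the only point requiring a small amount of care is the odd-dimensional case, where the equivalence is trivially true because both sides hold simultaneously rather than through the even-case formula, so I would make sure to record the Euler-characteristic argument that pins down $h_0(\tau) = h_{n+1}(\tau)$ there.
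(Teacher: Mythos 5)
Your proposal is correct and follows essentially the same route as the paper: both reduce palindromicity of $c(\tau)$ to the single equality $c_0(\tau)=c_n(\tau)$, i.e.\ $h_0(\tau)=h_{n+1}(\tau)$ (using that an $h$-tiling on a closed manifold is pure dimensional, so its only critical tiles are the closed and open $n$-simplices), and then invoke Theorem \ref{theohvector} to conclude. The extra bookkeeping you supply for the even and odd cases is exactly what the paper leaves implicit.
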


\begin{proof}
The $h$-tiling $\tau$ of  a simplicial complex homeomorphic to a closed $n$-dimensional manifold only contains $n$-dimensional tiles by Lemma \ref{lemmapure} and the singular ones are the open and closed simplices by definition. 
Thus, if $h (\tau)$ is palindromic, $h_0 (\tau) = h_{n+1} (\tau)$, so that $c_0 (\tau) = c_{n} (\tau)$ which means that $c (\tau)$ is palindromic as well. Converserly, if $c (\tau)$ is palindromic, then $c_0 (\tau) = c_{n} (\tau)$, so that $h_0 (\tau) = h_{n+1} (\tau)$ and the result follows from Theorem \ref{theohvector}.
\end{proof}

\begin{ex}
\label{exhtiling}
1) The boundary of a $(n+1)$-simplex is shellable and the associated $h$-tiling uses one $n$-dimensional basic tile of each order. Its $h$-vector $(1, \dots , 1)$ is thus palindromic. 

2) The boundary of a triangle is also tiled by three one-dimensional tile of order one, see Figure \ref{figExotic}. The associated $h$-vector $(0,3,0)$ is palindromic. 

\begin{figure}[h]
   \begin{center}
    \includegraphics[scale=1]{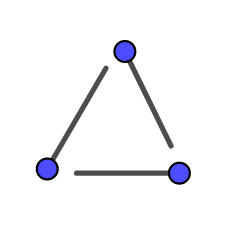}
    \caption{The non-shellable tiling on $\partial \Delta_2$.}
    \label{figExotic}
      \end{center}
 \end{figure}

3) The cylinder $\Delta_1 \times \partial \Delta_2$ has a triangulation tiled by six basic tiles of order one, obtained by gluing three copies of the square $\Delta_1 \times T^1_1$ pictured in Figure \ref{figCylinder}. 
By caping this cylinder with two open triangles, we get an $h$-tiled triangulation on the two-sphere for which neither the $h$-vector $(0,6,0,2)$ nor the critical vector $(0,0,2)$ are palindromic. 

\begin{figure}[h]
   \begin{center}
    \includegraphics[scale=1]{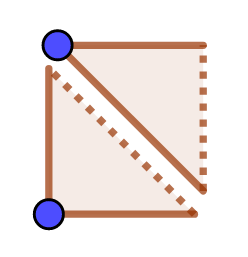}
    \caption{An $h$-tiling on $\Delta_1 \times T^1_1$.}
    \label{figCylinder}
      \end{center}
 \end{figure}
\end{ex}

Example \ref{exhtiling} exhibits in particular an $h$-tiling with non-palindromic $h$-vector on the triangulated two-sphere.  Does there exist such $h$-tilings on the other even-dimensional spheres?

In the case of Morse tilings, we observe. 

\begin{theo}
\label{theomvector}
Let $K$ be an $n$-dimensional simplicial complex homeomorphic to a closed manifold of dimension at most three and equipped with a Morse tiling $\tau$. Then, the following three conditions are equivalent.
\begin{enumerate}
\item The tiling $\tau$ uses as many open simplices as closed simplices. 
\item The $c$-vector of $\tau$ is palindromic.
\item The $h$-vector of $\tau$ is palindromic.
\end{enumerate}
\end{theo}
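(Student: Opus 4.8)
The plan is to prove all three equivalences by isolating the two genuinely different mechanisms: an Euler-characteristic argument linking $(1)$ and $(2)$, and a Dehn--Sommerville computation linking $(1)$ and $(3)$. I would first dispose of the easy directions. Writing $h_\tau(X)=\sum_{k=0}^{n+1}h_k(\tau)X^{n+1-k}$ and $\check h_\tau(X)=X^{n+1}h_\tau(1/X)$, condition $(3)$ reads $h_\tau=\check h_\tau$ and in particular forces $h_0(\tau)=h_{n+1}(\tau)$; since $h_0(\tau)=c_0(\tau)$ and $h_{n+1}(\tau)=c_n(\tau)$, each of $(2)$ and $(3)$ immediately yields $(1)$. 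So the real content is $(1)\Rightarrow(2)$ and $(1)\Rightarrow(3)$, which I would handle for $n\in\{1,2,3\}$.

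For $(1)\Leftrightarrow(2)$ I would argue purely from Lemma \ref{lemmaEuler}. When $n\in\{1,2\}$ the vector $c(\tau)$ has length $\le 3$, so being palindromic is literally the condition $c_0(\tau)=c_n(\tau)$ of $(1)$; when $n=3$ the Euler characteristic of a closed odd-dimensional manifold vanishes, and Lemma \ref{lemmaEuler} upgrades $c_0(\tau)=c_3(\tau)$ to the remaining symmetry $c_1(\tau)=c_2(\tau)$. This settles $(1)\Leftrightarrow(2)$ in each of the three dimensions.

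The heart of the matter is $(1)\Leftrightarrow(3)$, and here I would first establish a face-vector identity valid in every dimension. Computing the $f$-polynomial $\sum_j f_j\,t^{j+1}$ of a single tile --- it is $t^{k}(1+t)^{n+1-k}$ for a basic tile of order $k$, from which one subtracts $t^{k}(1+t)^{l+1-k}$ when a Morse face of dimension $l$ is deleted --- and summing over the partition $\tau$ of the open simplices of $K$, the substitution $t=X-1$ gives $h_\tau(X)=P_\tau(X)+\sum_{T}X^{\,l_T+1-k_T}(X-1)^{\,n-l_T}$, the sum running over the tiles $T$ with nonempty Morse face (of order $k_T$, Morse-face dimension $l_T$), where $P_\tau(X)=\sum_{i=0}^{n+1}\tilde f_{i-1}(X-1)^{n+1-i}$, with the empty-face bookkeeping forcing $\tilde f_{-1}=c_0(\tau)$ and $\tilde f_j=f_j(K)$ for $j\ge 0$. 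Because $K$ is a closed manifold, the coefficients $p_i$ of $P_\tau$ obey the Dehn--Sommerville relations exactly as in the proof of Theorem \ref{theohvector} --- the inputs being $f(K)$, the identity $\chi(K)=p_0+(-1)^np_{n+1}$ supplied by Lemma \ref{lemmaEuler} through $p_0=c_0(\tau)$ and $p_{n+1}=(-1)^n(\chi(K)-c_0(\tau))$, and the linear independence of the $X^i(X+1)^{n+1-i}$ --- whence the clean formula $P_\tau(X)-\check P_\tau(X)=(p_0-p_{n+1})(X-1)^{n+1}$.

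Finally I would invoke the hypothesis $n\le 3$ to control the correction term. The only tiles carrying a nonempty Morse face are $C^2_1$ in dimension two, and $C^3_1$, $C^3_2$ together with the regular tile $T^{3,1}_1$ in dimension three (there are none for $n=1$, so $\tau$ is then an $h$-tiling and Corollary \ref{corpalindromic} already applies). A short check shows the regular contribution of $T^{3,1}_1$ is palindromic as a degree-$(n+1)$ polynomial, hence drops out of $h_\tau-\check h_\tau$, whereas the critical contributions combine with $P_\tau-\check P_\tau$ to give $h_\tau-\check h_\tau=(p_0-p_3-c_1(\tau))(X-1)^3$ when $n=2$ and $h_\tau-\check h_\tau=(c_1(\tau)-c_2(\tau))(X-1)^3(X+1)$ when $n=3$. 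Substituting $p_0-p_3=c_0(\tau)+c_1(\tau)-c_2(\tau)$ (from $\chi(K)=c_0-c_1+c_2$) for $n=2$ and using $\chi(K)=0$ for $n=3$, the scalar factor vanishes precisely when $c_0(\tau)=c_n(\tau)$, i.e. under $(1)$, so that $h_\tau$ is palindromic iff $(1)$ holds. The step I expect to be the main obstacle is this face-vector identity $h_\tau=P_\tau+\mathrm{Corr}$ together with the verification that in dimension at most three every regular correction term is self-dual; in higher dimension this self-duality breaks down, which is exactly what confines the argument to $n\le 3$.
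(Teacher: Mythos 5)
Your proposal is correct, and for the only nontrivial implication, $(1)\Rightarrow(3)$, it takes a genuinely different route from the paper. The paper's proof is a double-counting argument: since $K$ is a closed manifold, every open $(n-1)$-simplex is a facet of exactly two $n$-simplices, hence (the tiling being a partition and pure dimensional by Lemma \ref{lemmapure}) a present facet of one tile and a missing facet of the other; equating the total numbers of present and missing facets gives $\sum_{k}(n+1-2k)h_k(\tau)=0$, which together with $h_0(\tau)=h_{n+1}(\tau)$ forces $h_1(\tau)=h_2(\tau)$ when $n=2$ and $h_1(\tau)=h_3(\tau)$ when $n=3$. You instead extend the generating-function proof of Theorem \ref{theohvector} to Morse tilings: your tile-by-tile face-polynomial identity $h_\tau=P_\tau+\sum_T X^{l_T+1-k_T}(X-1)^{n-l_T}$ is correct (each open simplex of $K$ lies in exactly one tile, the empty face is counted once per closed simplex, and the Morse face of a tile of order $k$ contributes $t^k(1+t)^{l+1-k}$ because it contains the unique $(k-1)$-face of the underlying basic tile), your classification of the tiles with nonempty Morse face in dimension at most three is exhaustive, and your computations of the corrections check out: $T^{3,1}_1$ contributes a self-dual term $X(X-1)^2$, while $C^2_1$, $C^3_1$, $C^3_2$ contribute $-(X-1)^3$, $(X+1)(X-1)^3$, $-(X+1)(X-1)^3$ to $h_\tau-\check h_\tau$, leading to your two displayed identities and the conclusion. (One cosmetic point: the identity $P_\tau-\check P_\tau=(p_0-p_{n+1})(X-1)^{n+1}$ is the even-dimensional form; for $n$ odd the correct factor is $(X+1)(X-1)^n$, but this is harmless since $p_0=p_{n+1}$ there.) The paper's argument is shorter and more elementary, needing only the simplest Dehn--Sommerville relation; yours is heavier but yields an explicit formula for $h_\tau-\check h_\tau$ in terms of $c(\tau)$ and $\chi(K)$, and it makes visible exactly where the restriction $n\le 3$ enters, namely that in higher dimensions regular Morse tiles can contribute non-self-dual correction terms not controlled by the critical vector --- which speaks directly to the open question raised after Example \ref{exoctahedron}.
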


\begin{proof}
The implications $2 \Rightarrow 1$ and $3 \Rightarrow 1$ hold true in any dimension, while $1 \Rightarrow 2$ is obvious in dimension at most two and follows from Lemma \ref{lemmaEuler} in dimension three, since the Euler characteristic of $K$ then vanishes from Poincaré duality, see \cite{Munk}. Let us thus assume that $1$ holds true and prove the implication $1 \Rightarrow 3$. By the simplest Dehn-Sommerville relation, every $(n-1)$-simplex $\sigma$ of $K$ is the face of exactly two $n$-simplices. The interiors of these two simplices are covered by two tiles of the tiling and the open face 
$\stackrel{\circ}{\sigma}$ is a facet of one of them and a missing facet of the other since the tiling defines a partition of $K$. It follows that the total number of facets of the tiles of $\tau$ coincides with the total number of missing facets of these tiles.  By $1$, the contributions to these totals of the tiles of order $0$ and $n+1$ coincide. If $n=3$, the same holds true for tiles of order two since they have both two facets and two missing facets. In dimension two (resp. three), we deduce that $\tau$ uses as many tiles of order one as tiles of order two (resp. three), so that $h(\tau)$ is palindromic. 
\end{proof}

\begin{ex}
\label{exoctahedron}
The octahedron carries a Morse tiling with non palindromic  $c$-vector and $h$-vector. It is obtained by patching the two tiled squares pictured in Figure \ref{figOctahedron}, so that its $h$-vector (resp. $c$-vector) equals
$(1,4,1,2)$ (resp. $(1,1,2)$).

\begin{figure}[h]
   \begin{center}
    \includegraphics[scale=1]{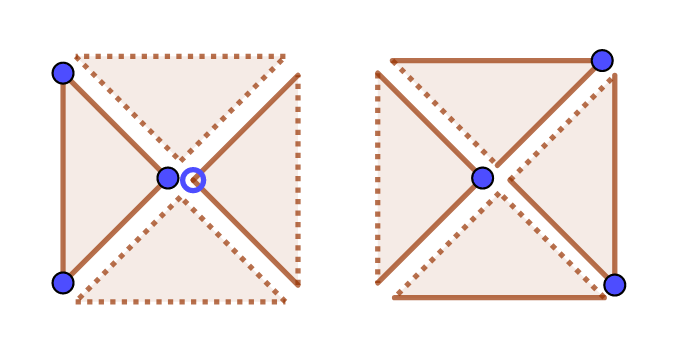}
    \caption{A Morse tiling on the octahedron.}
    \label{figOctahedron}
      \end{center}
 \end{figure}

\end{ex}
The tiling given by Example \ref{exoctahedron} contains two critical tiles of indices one and two which could be replaced by two regular tiles of order two to produce a tiling with palindromic $h$-vector. Such examples
with non-palindromic $h$-vectors and $c$-vectors can be obtained in a similar way in higher dimensions. But we do not know any Morse tiled closed triangulated manifold with palindromic critical vector and non-palindromic $h$-vector.

\subsection{Tilings of products}
\label{subsecproducts}

Recall that the product of two simplicial complexes is not a simplicial complex, it is a polyhedral complex whose cells are products of two simplices. Such a product can nevertheless be triangulated in such a way that each product of two simplices becomes the union of simplices of the underlying affine space, see \S $II. 8$ of \cite{EilSteen}, \cite{GKZ, San} and \S \ref{subsectriangulations}. We are going to consider such triangulations, which are primitive in the sense of Definition \ref{defprimitive} and in fact associated to staircases, see \S \ref{subsecstraircases} and \cite{GKZ}.

\begin{defi}
\label{defprimitive}
A primitive triangulation of a polyhedral complex is a triangulation having the same set of vertices.
\end{defi}

Our main result is the following Theorem \ref{theoproduct}, where we denote by $uv$ the graded product of a vector $u=(u_0 , \dots , u_n)$ of $\R^{n+1}$ with a vector $v=(v_0 , \dots , v_m)$ of $\R^{m+1}$, that is the product of the corresponding polynomials, so that $uv = (w_0, \dots , w_{n+m})$ where for every
$k \in \{ 0 , \dots , n+m \}$, $w_ k = \sum_{j=0}^k u_jv_{k-j}$. 

\begin{theo}
\label{theoproduct}
Let $S_1$ and $S_2$ be two Morse tiled (resp. shelled) sets with tame tilings (resp. shellings) $\tau_1$ and $\tau_2$. Then, $S_1 \times S_2$ carries tame Morse tileable (resp. shellable) primitive triangulations with critical vector $c(\tau_1)c(\tau_2)$. Moreover, if $\tau_1$, $\tau_2$ are pure dimensional, their $h$-vector is palindromic provided $h(\tau_1)$ and $h(\tau_2)$ are. 
\end{theo}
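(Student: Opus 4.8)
The plan is to build the triangulation one product of tiles at a time and to glue these blocks using the orders supplied by tameness. First I would use the tameness of $\tau_1$ and $\tau_2$ to fix orientations on the one-skeletons of the underlying complexes $K_1$ and $K_2$ satisfying the order and tameness conditions of Definition \ref{deftame}; by Proposition \ref{proporder} these induce compatible total orders on the vertices of every simplex of $K_1$ and of $K_2$, with the vertices of each Morse face being the maximal ones of its underlying simplex. For each pair of underlying simplices $\sigma_1 \times \sigma_2$ these total orders single out the staircase triangulation of \S \ref{subsecstraircases}, which is primitive. The point of using a single global pair of orders is that the staircase triangulation of a product of two faces is exactly the restriction of the staircase triangulation of the ambient product; hence the block triangulations agree on overlaps and assemble into a well-defined primitive triangulation of $S_1 \times S_2$.

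Next I would put a Morse tiling on each block. Given a tile $T_1$ of $\tau_1$ and a tile $T_2$ of $\tau_2$, Theorem \ref{theoproducttiles} equips the staircase-triangulated product $T_1 \times T_2$ with a Morse tiling (resp. shelling), and prescribes its critical tiles: a product of critical tiles of indices $k$ and $l$ yields a single critical tile of index $k+l$, while a product in which at least one factor is regular yields only regular tiles. Collecting these over all pairs $(T_1, T_2)$ gives a partition of $S_1 \times S_2$ into Morse tiles, and I would check the closedness condition of Definition \ref{defMorsetileable}: since the staircase triangulation is order-preserving and the factor tilings are closed in each dimension, the union of product tiles of dimension $\geq j$ is closed. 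Tameness of the resulting tiling follows from the canonical product order on vertices together with the blockwise tameness of Theorem \ref{theoproducttiles}, and may alternatively be checked via the criterion of Proposition \ref{proptame}. Counting critical tiles across all blocks, the number of index $m$ is $\sum_{k+l=m} c_k(\tau_1)\, c_l(\tau_2)$, which is precisely the graded product $c(\tau_1)c(\tau_2)$.

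For the shellability statement I would produce a filtration by Morse tiled subsets adding one tile at a time. Using the shelling orders of $\tau_1$ and $\tau_2$, I would first order the blocks $T_1 \times T_2$ by a product order (e.g.\ lexicographic) refining the two shelling filtrations, and within each block use the internal shelling order supplied by Theorem \ref{theoproducttiles}. The delicate point, which I expect to be the \emph{main obstacle}, is to verify that each newly added small tile meets the union of the previously added ones as required by Definition \ref{defMorseshellable}, at every step and in particular across block boundaries. This demands that, when a block $T_1 \times T_2$ is begun, the portion of its boundary already covered is itself a union of lower tiles compatible with the internal shelling of that block; this compatibility is exactly what the order-preserving staircase structure and tameness are designed to guarantee, but it must be checked carefully at the interfaces between consecutive blocks.

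Finally, for the palindromic property I would first note that, since the factor tilings are pure dimensional, the number of product tiles of each order contributed by a block $T_1 \times T_2$ depends only on the orders of $T_1$ and $T_2$, by the explicit description in Theorem \ref{theoproducttiles}; summing over all blocks, the $h$-vector of the product tiling $\tau$ is a fixed bilinear function of $h(\tau_1)$ and $h(\tau_2)$, so it is unchanged if $\tau_1, \tau_2$ are replaced by tilings with the same $h$-vectors. (Note that $h(\tau)$ is \emph{not} the graded product $h(\tau_1)h(\tau_2)$, whose degree is too large, which is why duality is needed here.) I would then invoke the duality Theorem \ref{theoduality}: reversing the orientations on $K_1$ and $K_2$ carries $\tau$ to the product tiling built from the dual tilings $\check\tau_1, \check\tau_2$ and identifies its $h$-vector with the reverse $\check{w}$ of $w = h(\tau)$. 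When $h(\tau_1), h(\tau_2)$ are palindromic, $h(\check\tau_i) = \check{h(\tau_i)} = h(\tau_i)$, so the bilinearity gives $h(\check\tau_1 \times \check\tau_2) = h(\tau)$. Combining this with the duality reversal yields $\check{w} = w$, that is $h(\tau)$ is palindromic.
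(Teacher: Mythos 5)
Your proposal follows essentially the same route as the paper: tameness supplies compatible vertex orders that glue the staircase triangulations of the blocks $T_1\times T_2$ into a primitive triangulation, Theorem \ref{theoproducttiles} is applied blockwise, the shelling comes from the lexicographic order on blocks refined by the internal staircase order, and the palindromic property comes from Theorem \ref{theoduality} together with the fact that a block's $h$-vector depends only on the orders of its factors (which is where pure dimensionality enters). Your bilinearity formulation of the last step is just a repackaging of the paper's grouping of tiles into pairs of orders $j$ and $n+1-j$, and the interface verification you flag as the main obstacle is resolved in the paper exactly by the lexicographic ordering on triples you propose.
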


By triangulation of $S_1 \times S_2$ we mean triangulations on the product of the underlying simplicial complexes. We do not guarantee Morse tileability for all primitive triangulations on this product, the ones for which we do by Theorem \ref{theoproduct} are given by the tameness of the tilings, see \S \ref{subsecprooftheoproduct}.  Let us also recall that all $h$-tilings are pure dimensional  in the case of triangulated manifolds by Lemma \ref{lemmapure}. We deduce Theorem \ref{theointro2}, namely.

\begin{cor}
\label{corproductmanifolds}
Every finite product of closed manifolds of dimensions less than four carries triangulations which admit tame Morse shellings with palindromic $c$-vectors and $h$-vectors. 
\end{cor}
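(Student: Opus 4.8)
The plan is to manufacture the shelling on a product out of shellings on its factors by iterating Theorem \ref{theoproduct}, so that the actual content reduces to equipping each single closed manifold with a suitable shelling. I would first record the elementary algebraic fact that the graded product of two palindromic vectors is again palindromic: by definition the graded product is multiplication of the associated polynomials, a vector $v=(v_0,\dots,v_n)$ is palindromic exactly when $\sum_i v_i X^i$ is self-reciprocal (invariant under $X \mapsto 1/X$ up to the degree factor), and a product of self-reciprocal polynomials is self-reciprocal. Since a finite product of closed manifolds is again a closed manifold, every Morse tiling on it is pure dimensional by Lemma \ref{lemmapure}. Granting that each factor $M_i$ carries a tame Morse shelling $\tau_i$ with palindromic $c(\tau_i)$ and $h(\tau_i)$, an induction on the number of factors, applying Theorem \ref{theoproduct} to $M_1$ and $M_2 \times \cdots \times M_r$, then produces a tame Morse shelling on $M_1 \times \cdots \times M_r$ whose critical vector $c(\tau_1)\cdots c(\tau_r)$ is a graded product of palindromic vectors, hence palindromic, and whose $h$-vector is palindromic because all the $h(\tau_i)$ are and the factors are pure dimensional.

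It therefore remains to treat a single closed manifold $M$ of dimension $n \leq 3$. The existence of a Morse shellable triangulation is immediate for $n \in \{0,1\}$ (a point; a disjoint union of triangulated circles, each shelled by one closed edge, several half-open edges and one open edge) and is provided for $n=2$ and $n=3$ by Theorems $1.3$ and $1.4$ of \cite{SaWel2}. To secure a palindromic critical vector I would arrange the shelling to use exactly one closed and one open $n$-simplex, that is $c_0 = c_n = 1$ on each connected component. This is achieved by building the Morse shelling from a handle decomposition with a single $0$-handle and a single $n$-handle, each handle $\stackrel{\circ}{\Delta}_k \times \Delta_{n-k}$ carrying a Morse shelling with a unique critical tile of index $k$ by Corollary \ref{corhandle}, and by shelling the components independently, which is legitimate since our shellings need not be connected (Remark $2.16$ of \cite{SaWel2}). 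Condition $(1)$ of Theorem \ref{theomvector} (as many open as closed simplices) is then satisfied, so by that theorem both $c(\tau)$ and $h(\tau)$ are palindromic.

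Finally, I would invoke Proposition \ref{propbarycsubdtame}: one barycentric subdivision turns this shelling into a tame one while preserving the number of critical tiles and their indices, so the critical vector stays palindromic. The subdivided complex is still a closed manifold of the same dimension $n \leq 3$, hence Theorem \ref{theomvector} again forces its $h$-vector to be palindromic, and the reduction of the first paragraph now applies verbatim.

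The main obstacle is precisely the palindromicity of the critical vector on each factor: merely citing the existence of Morse shellings from \cite{SaWel2} is insufficient, since the octahedron (Example \ref{exoctahedron}) carries Morse shellings of a dimension-$\leq 3$ manifold with non-palindromic $c$-vector. The delicate point is thus to realize a shelling with $c_0 = c_n = 1$ through a handle decomposition having a single minimum and a single maximum; once this is done, Theorem \ref{theomvector} supplies the rest, the remaining symmetry $c_k = c_{n-k}$ for odd $n$ being forced automatically by the vanishing of the Euler characteristic via Lemma \ref{lemmaEuler}.
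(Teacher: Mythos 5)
Your proposal is correct and follows essentially the same route as the paper: equip each factor with a Morse shelling using a single closed and a single open simplex (via a handle decomposition with one minimum and one maximum, citing Cerf), make it tame by one barycentric subdivision using Proposition \ref{propbarycsubdtame}, get palindromicity from Theorem \ref{theomvector}, and conclude by finite induction with Theorem \ref{theoproduct}. The only additions are points the paper leaves implicit, namely that a graded product of palindromic vectors is palindromic and that Lemma \ref{lemmapure} supplies the pure-dimensionality hypothesis.
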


\begin{proof}
By Theorem $1.4$ of \cite{SaWel2}, every closed connected manifold of dimension less than four carries a Morse shellable triangulation which can moreover be chosen in such a way that the Morse shelling uses a unique critical tile of index $0$ and a unique critical tile of maximal index, for there exists a Morse function on this manifold having a single minimum and a single maximum, see \cite{Cerf}. By Proposition \ref{propbarycsubdtame}, such a Morse shelling becomes tame after one barycentric subdivision and it keeps the property to use only one closed and one open simplex. By Theorem \ref{theomvector}, its critical and h-vector are then palindromic. The result now follows by finite induction from Theorem \ref{theoproduct}.
\end{proof}

When one of the tilings $\tau_1$, $\tau_2$ uses only regular tiles, the tilings given by Theorem \ref{theoproduct} share the same property since $c(\tau_1)c(\tau_2)$ then vanishes. However, it is not supposed to be an $h$-tiling even if $\tau_1$ and $\tau_2$ are, for $c(\tau_1)c(\tau_2)$ has more than two non-vanishing entries in general. The following variant of Theorem \ref{theoproduct} fills this gap. 

\begin{theo}
\label{theoproducthtiling}
Let $S_1$, $S_2$ be two $h$-tiled sets and let the one-skeleton of their underlying simplicial complexes $K_1$, $K_2$ be equipped with orientations given by Proposition \ref{proporder}. 
Then, the tiling of $S_1 \times S_2$ given by Theorem \ref{theoproduct} is an $h$-tiling provided that if $S_1$ (resp. $S_2$) contains a tile which has been deprived both of its facet not containing the biggest vertex and its facet not containing the least vertex, then every tile of $S_2$ (resp. $S_1$) has been deprived either of its facet not containing the biggest vertex or of its facet not containing the least vertex. 
\end{theo}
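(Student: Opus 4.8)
The plan is to keep the tiling that Theorem~\ref{theoproduct} already produces on $S_1 \times S_2$ and to prove that, under the stated hypothesis, none of its tiles carries a non-empty Morse face; since an $h$-tiling is precisely a Morse tiling all of whose tiles are basic, this is exactly what has to be shown. Recall from the construction underlying Theorem~\ref{theoproduct} that every maximal simplex of the primitive triangulation lies in a unique product $\sigma_1 \times \sigma_2$ of maximal simplices of $K_1$ and $K_2$, carrying basic tiles $T_1$ of $S_1$ and $T_2$ of $S_2$, and that the tile attached to it is the staircase simplex $\sigma_\gamma \subset \sigma_1 \times \sigma_2$ equipped with the facets and the Morse face it inherits from $T_1 \times T_2$ and from the chosen orientation. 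I would therefore first reduce the statement to the product of two single basic tiles $T_1$, $T_2$, of underlying simplices $\Delta_m$ and $\Delta_p$, and to a fixed staircase $\gamma$ in the grid $\{0,\dots,m\} \times \{0,\dots,p\}$.

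I would then record the combinatorial rule that governs Morse faces. Depriving $T_1$ of its facet opposite a vertex $c$ removes from $\sigma_\gamma$ the face $R_c$ spanned by the vertices of $\gamma$ outside the column $\{i=c\}$, and the codimension of $R_c$ equals the number of vertices of $\gamma$ in that column (dually for $T_2$ and the rows). As a monotone staircase meets every column and every row, $R_c$ is a genuine facet exactly when $\gamma$ passes once through column $c$, and a face of codimension $\geq 2$, hence a candidate Morse face, exactly when $\gamma$ performs a vertical run inside column $c$. Such a candidate becomes an actual Morse face of the tile if and only if it is contained in no removed facet of $\sigma_\gamma$, that is, if and only if no vertex of $\gamma$ in column $c$ is deleted by another removed facet, be it a boundary facet coming from $T_1$ or $T_2$ or an internal facet assigned to $\sigma_\gamma$ by the orientation. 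The crucial point is that the two extreme facets, opposite the least and the biggest vertex of a factor, produce candidates along the initial run issuing from the minimal vertex $(0,0)$ and along the final run reaching the maximal vertex $(m,p)$, and arise precisely when $\gamma$ begins, respectively ends, with a move in the other factor.

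The technical heart, and the step I expect to be the main obstacle, is to control these absorptions for the specific, orientation-driven distribution of internal facets used in Theorem~\ref{theoproduct}. Along any run interior to the grid the straight vertices are alone in their rows, resp. columns, so that the corresponding candidate is absorbed by a boundary facet of the other factor or by the internal facet at a corner of the run, and I would check that the orientation forces at least one such deletion; the same argument, with no competition for the absorbing corner, disposes of a candidate coming from a single extreme facet of a factor. The delicate residual case is that of a factor deprived of \emph{both} its extreme facets: a single staircase may then produce candidates at both $(0,0)$ and $(m,p)$, and these compete for the internal facets incident to the two extreme vertices, the only remaining absorber being a boundary facet supplied by the other factor at that vertex. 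Carrying out this bookkeeping globally, over all staircases that share their internal facets, is where the real work lies.

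Finally I would feed in the hypothesis. If a factor, say $T_1$, is deprived of both extreme facets, the condition guarantees that every tile $T_2$ of $S_2$ is deprived of at least one of its extreme facets, which furnishes precisely the boundary absorber missing at the minimal or the maximal vertex, so that together with the internal facets all the extreme candidates are absorbed and the resulting tile is basic; if no factor is deprived of both extreme facets, only interior and one-sided candidates occur, and these are absorbed by the previous step. In either case no $\sigma_\gamma$ carries a Morse face, so the tiling is an $h$-tiling. That the hypothesis cannot simply be dropped is shown already by $\stackrel{\circ}{\Delta}_1 \times \Delta_1$, the product of an open and a closed edge, whose tilings all contain a critical tile of index one and are therefore never $h$-tilings.
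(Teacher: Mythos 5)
Your overall strategy coincides with the paper's: reduce to a single product $T_1\times T_2$ of basic tiles and a single staircase simplex, decide for each removed facet of a factor whether it cuts out a facet or a face of higher codimension of that staircase simplex, and show that under the hypothesis every face of higher codimension is already contained in a removed facet, so that every tile of the induced tiling is basic. The combinatorial dictionary you set up (codimension equal to the number of staircase vertices in the relevant column or row, absorption by the internal facets prescribed by the shelling order) is correct and is exactly what Propositions \ref{prop1} and \ref{prop2} encode; the paper itself disposes of the theorem by observing that the hypothesis is Condition $h$ of \S \ref{subseccond} for every pair of tiles and then quoting the case analysis in the proof of Theorem \ref{theoproducttiles}.

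The gap is in your last step, where you feed in the hypothesis: the two ends of a staircase are not interchangeable. Writing $T_1=\Delta_{[n]}\setminus\cup_{j\in J_1}\Delta_{[n]\setminus\{j\}}$ and $T_2=\Delta_{[m]}\setminus\cup_{i\in J_2}\Delta_{[m]\setminus\{i\}}$ via the given orders, the candidates produced by $0\in J_1$ and by $m\in J_2$ are always absorbed by internal facets (case 1 of Propositions \ref{prop2} and \ref{prop1}), whereas the candidate produced by $n\in J_1$ on a staircase with $\#I_n>1$ is contained only in the facet opposite the maximal vertex $(n,m)$ and is absorbed iff $m\in J_2$, and the candidate produced by $0\in J_2$ on a staircase with $\#I_0=1$ is absorbed iff $0\in J_1$. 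Hence the fixed shelling $\big(T_I\cap(T_1\times T_2)\big)_{I\in I(n,m)}$ is an $h$-tiling iff ($n\in J_1\Rightarrow m\in J_2$) and ($0\in J_2\Rightarrow 0\in J_1$): an extreme absorber supplied by the other factor does not help if it sits at the wrong end. Concretely, take $J_1=\{0,n\}$ and $J_2=\{0\}$ with $m\geq 1$; the hypothesis of the theorem holds, yet for a staircase with $I_n=\{m-1,m\}$ the face cut out by $n\in J_1$ has codimension two and lies in no removed facet, so this shelling is not an $h$-tiling. What saves the statement is that the tiling of Theorem \ref{theoproduct} may, on each product of underlying simplices, use instead the shelling $\big({\cal P}(T_I)\big)_{I\in I(n,m)}$ obtained by reversing both total orders, which replaces $(J_1,J_2)$ by $(n-J_1,m-J_2)$ and the pair of implications above by ($0\in J_1\Rightarrow 0\in J_2$) and ($m\in J_2\Rightarrow n\in J_1$); a short Boolean check shows that the hypothesis is exactly what guarantees that, for every pair of tiles, at least one of the two shellings satisfies its pair of implications. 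Your argument needs this alternative made explicit — it is the dichotomy running through the proof of Theorem \ref{theoproducttiles} — and without it the bookkeeping you defer breaks on the example above.
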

Recall that the total orders on the vertices of the simplices of $K_1$ and $K_2$ are given by Proposition \ref{proporder}. If the $h$-tiling of $S_1$ (resp. $S_2$) contains an open simplex, then the $h$-tiling of $S_2$ (resp. of $S_1$) has in particular to be regular for Theorem \ref{theoproducthtiling} to apply. Now, if $S_2$ is the non-shellable tiling of $\partial \Delta_2$ given in the second part of Example \ref{exhtiling}, then Theorem \ref{theoproducthtiling} applies whatever $S_1$ is. We may compute the $h$-vector of $S_1 \times S_2$ in this case.

\begin{theo}
\label{theodelta2}
Let $S$ be an $h$-tiled set of pure dimension $n$. Then, $S \times \partial \Delta_2$ carries primitive triangulations which admit $h$-tilings $\tau$ such that $h_0 (\tau) = h_{n+2} (\tau) = 0$ and 
for every $j \in \{ 1 , \dots , n+1 \}$, $h_j (\tau) = j h_j (S) + (n+2-j) h_{j-1} (S)$.
\end{theo}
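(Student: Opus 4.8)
The plan is to obtain the triangulation and its tiling from the product construction and then to compute the $h$-vector by a local, cell-by-cell analysis that reduces everything to a single product of basic tiles. First I would check $h$-tileability. The three-tile tiling of $\partial \Delta_2$ from the second part of Example \ref{exhtiling} uses only tiles of order one, each an edge deprived of a single facet; in particular no tile of $\partial \Delta_2$ is simultaneously deprived of its facet containing the biggest vertex and of its facet containing the least vertex. The hypothesis of Theorem \ref{theoproducthtiling} with $S_2 = \partial \Delta_2$ is therefore satisfied whatever $S$ is, so the primitive triangulation of $S \times \partial \Delta_2$ furnished by Theorem \ref{theoproduct} carries a genuine $h$-tiling $\tau$. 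The same construction gives $c(\tau) = c(\tau_S)\, c(\tau_{\partial \Delta_2})$ by Theorem \ref{theoproduct}, and since $\partial \Delta_2$ carries no critical tile this graded product vanishes. As $h_0(\tau) = c_0(\tau)$ and $h_{n+2}(\tau) = c_{n+1}(\tau)$, I immediately obtain $h_0(\tau) = h_{n+2}(\tau) = 0$.

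Next I would localize the count. Because $\tau$ is primitive and built cell by cell, its restriction to each top cell $\overline{\sigma} \times \overline{e}$, where $\sigma$ is a tile of $\tau_S$ of order $k$ and $e$ is an edge-tile of $\partial \Delta_2$, coincides with the staircase tiling of the elementary product $T^n_k \times T^1_1$ studied in \S \ref{subsectriangulations} and Theorem \ref{theoproducttiles}. Distinct cells meet only along lower-dimensional faces, so every top-dimensional tile of $\tau$ lies in exactly one such cell, and the $h$-vector of $\tau$ is the sum of the order-contributions of these elementary products over all the relevant pairs $(\sigma, e)$.

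The crux, and the step I expect to be the main obstacle, is the exact order distribution of a single elementary product: I claim that under the staircase triangulation $T^n_k \times T^1_1$ splits into exactly $k$ tiles of order $k$ and $n+1-k$ tiles of order $k+1$. To establish this I would use the explicit staircase simplices $\Sigma_0, \dots, \Sigma_n$ of $\Delta_n \times \Delta_1$, where $\Sigma_i$ is spanned by the bottom vertices up to column $i$ together with the top vertices from column $i$ on, and then count, for each $\Sigma_i$, its deprived facets: those inherited from the $k$ removed facets of $T^n_k$ and the removed facet of $T^1_1$, together with the facets along which $\Sigma_i$ is attached to the previously placed simplices of the shelling. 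The delicate point is to verify, using the orientation furnished by Proposition \ref{proporder}, that these depriving mechanisms are compatible with the $h$-tiling structure and that they organize the $n+1$ simplices into precisely $k$ tiles of order $k$ and $n+1-k$ of order $k+1$. The case $n=1$, where $T^1_0 \times T^1_1$ yields two tiles of order one and $T^1_1 \times T^1_1$ yields one tile of order one and one of order two, already exhibits the full mechanism and serves as the base of an induction on $n$ along the columns of the staircase.

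Finally I would assemble the global count by grouping the tiles of $\tau_S$ according to their order. By the elementary distribution above, an order-$j$ tile of $\tau$ arises either from an order-$j$ tile of $\tau_S$, which contributes $j$ of them, or from an order-$(j-1)$ tile of $\tau_S$, which contributes $n+1-(j-1) = n+2-j$ of them. Collecting these contributions over all tiles of $\tau_S$ yields $h_j(\tau) = j\, h_j(S) + (n+2-j)\, h_{j-1}(S)$ for every $j \in \{1, \dots, n+1\}$, which together with the vanishing of $h_0(\tau)$ and $h_{n+2}(\tau)$ established above completes the proof.
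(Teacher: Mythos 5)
Your proposal tracks the paper's proof essentially step for step: the same invocation of Theorem \ref{theoproducthtiling} with the three order-one tiles of $\partial \Delta_2$, the same reduction to the elementary products $T^n_k \times T^1_1$, and the same count of $k$ tiles of order $k$ and $n+1-k$ tiles of order $k+1$ per such product --- a count which the paper reads off directly from formula (\ref{eqnarrayorder}) of \S \ref{subseccaseofbasictiles} rather than by the induction on $n$ you sketch, so the ``main obstacle'' you anticipate is already settled by earlier material. The one point to watch, which your write-up shares with the paper's own final assembly, is that your localization principle sums contributions over all pairs $(\sigma,e)$, hence over the three edge-tiles of $\partial\Delta_2$, while the closing formula collects contributions over the tiles of $\tau_S$ only; make sure these two bookkeepings are reconciled before signing off.
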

In Theorem \ref{theodelta2} again, $h(\tau)$ is then palindromic as soon as $h(S)$ is, or provided $S$ is homeomorphic to a closed manifold by Corollary \ref{corpalindromic}, since $c(\tau)$ is palindromic. This result implies Theorem \ref{theointro1}, showing that the product of a sphere and a torus of any dimensions carries $h$-tileable triangulations. More precisely, we deduce the following Corollary \ref{cortores}.

\begin{defi}
\label{defwalks}
A walk of length $m$ from the integer $a$ to $b$ is a sequence $w_{n,m} = \big( w_{n,m} (i) \big)_{i \in \{n, \dots , n+m\}}$ such that $w_{n,m} (n) = a$, $w_{n,m} (n+m) = b$ and for every $i \in \{n, \dots , n+m-1\}$,
either $w_{n,m} (i+1) = w_{n,m} (i) $ or $w_{n,m} (i+1) = w_{n,m} (i) +1$. The weight of such a walk is the product $p(w_{n,m}) = \Pi_{i=n}^{n+m-1} p_i(w_{n,m})$, where $p_i(w_{n,m}) = w_{n,m} (i) $ if $w_{n,m} (i+1) = w_{n,m} (i) $ and $p_i(w_{n,m}) = i+1-w_{n,m} (i) $ if $w_{n,m} (i+1) = w_{n,m} (i) +1$.
\end{defi}

\begin{cor}
\label{cortores}
For every $n \geq 0$ and $m \geq 1$, the product $\partial \Delta_{n+1} \times (\partial \Delta_2 )^m$ carries primitive triangulations which admit $h$-tilings using no critical tile. Moreover, the $h$-vector of such an $h$-tiling $\tau$ is palindromic and satisfies, for every $k \in \{ 0 , \dots , n+m+1 \}$, $h_k (\tau) = \sum_{w_{n,m}} p(w_{n,m})$, where the sum is taken over all walks of length $m$ from an element of $ \{ 0 , \dots , n+1 \}$ to $k$.
\end{cor}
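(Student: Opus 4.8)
The strategy is to obtain the triangulation and its $h$-tiling by iterating Theorem \ref{theodelta2}, starting from the sphere factor $\partial \Delta_{n+1}$ and attaching the $m$ copies of $\partial \Delta_2$ one at a time. First I would record the base case: by Example \ref{exhvect} and the first part of Example \ref{exhtiling}, the boundary $\partial \Delta_{n+1}$ carries a shellable $h$-tiling whose $h$-vector is $(1, \dots , 1)$, using one $n$-dimensional basic tile of each order $0, \dots , n+1$ and no other; in particular it is pure of dimension $n$ and its only critical tiles are the single closed and single open simplex. Then I would apply Theorem \ref{theodelta2} repeatedly. Since Theorem \ref{theoproducthtiling} applies to any factor against $\partial \Delta_2$ (as noted in the text, because $\partial \Delta_2$ is the tiling with $h$-vector $(0,3,0)$ and contains no tile deprived of both extremal facets), each product with a further $\partial \Delta_2$ again admits an $h$-tiling, and the resulting triangulation is primitive by Theorem \ref{theoproduct}. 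After $m$ steps I obtain a primitive triangulation of $\partial \Delta_{n+1} \times (\partial \Delta_2)^m$ admitting an $h$-tiling.

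Next I would track the vectors. At each stage the set is homeomorphic to a closed manifold (a product of a sphere with tori), so by Lemma \ref{lemmapure} the tiling is pure dimensional and by Lemma \ref{lemmaEuler} the $c$-vector encodes the Euler characteristic, which is $0$ since each factor after the first contributes a torus factor of vanishing Euler characteristic; combined with the formula $h_0(\tau)=h_{n+2\cdots}(\tau)=0$ of Theorem \ref{theodelta2} at the top and bottom, this forces the tiling to use no critical tile at all, giving the claimed $c(\tau)=0$. Palindromicity of $h(\tau)$ then follows from Corollary \ref{corpalindromic}, since the $c$-vector is trivially palindromic. The one genuinely combinatorial task is the closed formula $h_k(\tau) = \sum_{w_{n,m}} p(w_{n,m})$. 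Here I would unwind the recursion of Theorem \ref{theodelta2}: writing $h^{(t)}$ for the $h$-vector after $t$ attachments, that theorem gives, in the dimension-$(n+t)$ setting, the single-step rule
\begin{equation*}
h^{(t)}_j = j\, h^{(t-1)}_j + (n+t+1-j)\, h^{(t-1)}_{j-1},
\end{equation*}
with $h^{(0)} = (1, \dots , 1)$ on indices $0, \dots , n+1$.

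I would then argue by induction on $m$ that this recursion is solved exactly by the weighted walk sum of Definition \ref{defwalks}. The induction step matches the two terms of the recursion to the two allowed moves of a walk at step $i = n+t-1$: a horizontal move $w(i+1)=w(i)$ contributes the factor $p_i = w(i) = j$, reproducing the coefficient $j$, while a diagonal move $w(i+1)=w(i)+1$ contributes $p_i = i+1-w(i) = (n+t)-( j-1) = n+t+1-j$, reproducing the coefficient $n+t+1-j$; the base case $h^{(0)}_a = 1$ is the empty walk of weight $1$ starting at $a \in \{0, \dots , n+1\}$. Summing over all starting points $a \in \{0, \dots , n+1\}$ then yields precisely $h_k(\tau) = \sum_{w_{n,m}} p(w_{n,m})$ over walks of length $m$ ending at $k$. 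The main obstacle is purely bookkeeping: keeping the index shifts in Definition \ref{defwalks} (the factors $p_i$ depend on the absolute position $i$, not just on the step number) aligned with the $t$-dependent coefficients in the recursion of Theorem \ref{theodelta2}. Once the indexing is set up so that the walk runs over $i \in \{n, \dots , n+m\}$, the identification of each recursion coefficient with the corresponding walk weight factor is immediate and the induction closes.
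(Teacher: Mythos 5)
Your proposal is correct and follows essentially the same route as the paper: the base case is the shelling of $\partial \Delta_{n+1}$ with one tile of each order, and the formula is obtained by induction on $m$ via Theorem \ref{theodelta2}, identifying the two coefficients $j$ and $n+t+1-j$ of the recursion with the weights of the two possible last steps of a walk, with palindromicity coming from Corollary \ref{corpalindromic}. The only cosmetic difference is that the paper deduces the absence of critical tiles directly from $h_0(\tau)=h_{\mathrm{top}}(\tau)=0$ (the only critical tiles of an $h$-tiling being the closed and open simplices), so your detour through Lemma \ref{lemmaEuler} is harmless but unnecessary.
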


\begin{proof}
Let us equip $\partial \Delta_{n+1}$ with the shelling given in the first part of Example \ref{exhtiling}, which uses one tile of each order $k \in  \{ 0 , \dots , n+1 \}$. The result follows by induction on $m$, by successive applications of Theorem \ref{theodelta2}. Indeed, when $m=1$, if $k \in \{ 0, n+2 \}$, the walk is unique and its weight vanishes. If $k$ belongs to $\{ 1, \dots , n+1 \}$, there are two walks leading to $k$, namely $(k,k)$ and $(k-1,k)$. The weight of the first one is $k$ by Definition \ref{defwalks} and the weight of the second one is $n+2-k$. By Theorem \ref{theodelta2}, $h_k (\partial \Delta_{n+1} \times \partial \Delta_2) = k + (n+2-k)$ coincides with the sum of these walks. Let us now assume that the result is proven up to the rank $m$ and let us prove it for $m+1$. Again, for $k=0$ or $n+m+2$, the walk leading to $k$ is unique and its weight vanishes. When $k$ belongs to $\{ 1, \dots , n+m+1 \}$, a walk $w_{n,m+1}$ leading to $k$ restricts either to a walk $w_{n,m}$ leading to $k$ or to a walk $w_{n,m}$ leading to $k-1$ while conversely, any such walk extends uniquely to a walk $w_{n,m+1}$ leading to $k$. Moreover, by Definition \ref{defwalks}, in the first case, $p(w_{n,m+1}) = p(w_{n,m}) k$ and in the second, $p(w_{n,m+1}) = p(w_{n,m}) (n+m+2-k)$, while by the induction hypothesis, $\sum_{w_{n,m}} p(w_{n,m}) = h_k(\partial \Delta_{n+1} \times (\partial \Delta_2 )^m)$ (resp. $\sum_{w_{n,m}} p(w_{n,m}) = h_{k-1} (\partial \Delta_{n+1} \times (\partial \Delta_2 )^m)$) in the first case (resp. in the second case). The result now follows from Theorem  \ref{theodelta2} with $S = \partial \Delta_{n+1} \times (\partial \Delta_2 )^m$, the palindromic property being ensured by Corollary \ref{corpalindromic}.
\end{proof}

The $h$-vector $h^{n,m}$ of the $h$-tilings given by Corollary \ref{cortores} has its own interest. By Theorem $4.9$ of \cite{SaWel1}, it does not depend on the choice of the tiling but differs from the $h$-vector of the underlying primitive triangulation, see Corollary $4.10$ of \cite{SaWel1}. What is the asymptotic of $h^{n,m}$ as $m$ grows to $+ \infty$? More precisely, what is the limit, as $m$ grows  to $+ \infty$, of the probability measure
$$\frac{1}{\sum_{k=0}^{n+m+1} h^{n,m}_k} \sum_{k=0}^{n+m+1} h^{n,m}_k \delta_{\frac{2k -n-m-1}{n+m+1}},$$
where $\delta_x$ denotes the Dirac measure at $x \in \R$?

We also do not know which closed manifolds carry $h$-tileable triangulations. They have non-negative Euler characteristic by Lemma \ref{lemmaEuler}.

\subsection{Tilings of handles and duality}
\label{subsechandles}

The sets $S_1 , S_2$ in Theorems \ref{theoproduct} and \ref{theoproducthtiling} may just consist of single tiles. In fact, these results follow from this special case to which we devote this section.

\begin{theo}
\label{theoproducttiles}
Let $T_1$ and $T_2$ be two basic tiles, one of which being regular. Then, $T_1 \times T_2$ carries shellable primitive triangulations using only regular tiles in their shelling. If $T_1$ and $T_2$ are Morse tiles,
then $T_1 \times T_2$ carries Morse shellable primitive triangulations using a critical tile iff both $T_1$ and $T_2$ are critical and this critical tile is then unique of index the sum of the indices of $T_1$ and $T_2$. 
Moreover, all these shellings are tame and pure dimensional.
\end{theo}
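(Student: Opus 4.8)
The plan is to realise the wanted triangulation as a \emph{staircase triangulation} of the product of the two underlying simplices and to exhibit an explicit shelling of it. Write $p=\dim T_1$ and $q=\dim T_2$, so that $T_1\subset\Delta_{[p]}$ and $T_2\subset\Delta_{[q]}$ are obtained by deleting $k_1$, resp. $k_2$, of the facets and no Morse face (we deal with basic tiles). First I would triangulate $\Delta_{[p]}\times\Delta_{[q]}$ by the standard staircase triangulation, whose $(p+q)$-simplices $\sigma_\pi$ are indexed by the monotone lattice paths $\pi$ from $(0,0)$ to $(p,q)$: the vertices of $\sigma_\pi$ are the $p+q+1$ lattice points met by $\pi$, each read as a vertex $(e_a,e_b)$ of the product. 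This triangulation has the same vertices as the polyhedral product, hence is primitive in the sense of Definition \ref{defprimitive}, and every one of its maximal simplices has dimension $p+q$, which yields pure dimensionality for free.

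Next I would fix a shelling order on the paths (for instance the lexicographic order on their sequences of right/up steps) and check the classical shelling property that each newly attached $\sigma_\pi$ meets the union of the previous ones along a pure union of facets. On the closed product this produces a shelling into basic tiles, the first being the closed $(p+q)$-simplex and the others of strictly positive order. I would then intersect this filtration with the subset $T_1\times T_2$: deleting the prescribed facets of $T_1$ and $T_2$ amounts to removing the boundary faces $\big(\partial_j\Delta_{[p]}\big)\times\Delta_{[q]}$ and $\Delta_{[p]}\times\big(\partial_j\Delta_{[q]}\big)$ of the product. The key point to establish is that, inside each simplex $\sigma_\pi$, this deletion removes a further collection of facets together with \emph{at most one} face of higher codimension, so that each piece of the restricted filtration is again a single Morse tile. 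The main obstacle is precisely this path-by-path bookkeeping: computing, for every $\sigma_\pi$, which facets are deleted (raising the order) and whether a single lower-dimensional face $\mu$ survives as a separate deleted face, in which case it is the Morse face of the resulting tile.

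The heart of the matter is then the behaviour of the critical tiles. A tile becomes critical exactly when the faces deleted from the two factors conspire, inside one $\sigma_\pi$, to cut off a surviving face of \emph{minimal} dimension, one unit below the order of the tile. I would show that a non-basic tile can occur in at most one simplex of the triangulation, and that the resulting tile is critical if and only if each of $T_1$, $T_2$ is extremal, i.e. closed or open, hence critical; its Morse face then has dimension equal to the sum of the indices of $T_1$ and $T_2$ minus one, so that the unique critical tile has index the sum of the two indices. The mechanism to isolate is that an open or regular factor deletes a whole facet that \emph{absorbs} the would-be Morse face, keeping the tile basic and, when one factor is regular, forcing every tile of the shelling to be a regular basic tile (in particular no closed and no open simplex survives); only a closed factor, which keeps the corresponding cap, leaves a lower-dimensional face free to become the Morse face against an open factor. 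The two pure cases are then consistent: closed $\times$ closed yields the closed simplex as critical tile of index $0$, and open $\times$ open yields the open simplex as critical tile of index $p+q$.

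Finally, tameness comes almost for free from the previous analysis: since at most one tile of the shelling carries a non-empty Morse face, its underlying simplices are trivially disjoint, so the criterion of Proposition \ref{proptame} applies and the shelling is tame; the required orientation of the one-skeleton, ordering the vertices so that this single Morse face is maximal in its simplex, satisfies both the order and tameness conditions of Definition \ref{deftame} by Proposition \ref{proporder} and Example \ref{exorder}. Pure dimensionality is immediate, as every maximal simplex of the staircase triangulation has dimension $p+q$. The handle $\stackrel{\circ}{\Delta}_k\times\Delta_{n-k}$ of Theorem \ref{theointro4} is the special case $T_1$ open and $T_2$ closed, for which the construction produces a unique critical tile of index $k+0=k$.
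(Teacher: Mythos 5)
Your framework coincides with the paper's: the staircase (lattice-path) triangulation of $\Delta_{[p]}\times\Delta_{[q]}$ of Corollary \ref{corprimtriangulations}, its lexicographic shelling, and the trace of that shelling on $T_1\times T_2$. But the proposal has genuine gaps. The entire content of the theorem lies in the ``path-by-path bookkeeping'' that you explicitly defer: the paper devotes Propositions \ref{prop1} and \ref{prop2} and all of \S\ref{subsecprooftheoproducttiles} to it, and the outcome is \emph{not} what you assert for an arbitrary choice of vertex orders. The trace of the staircase tiling on $T_1\times T_2$ is a Morse tiling only when the deleted facets of $T_1$ and $T_2$ sit compatibly with respect to the chosen total orders (Conditions $h$ and $M$ of \S\ref{subseccond}); the paper exhibits products on which neither $(T_I)_{I}$ nor its image under the order-reversing automorphism ${\cal P}$ induces a Morse tiling. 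Even under those conditions one must sometimes pass from $(T_I)_I$ to $({\cal P}(T_I))_I$, i.e.\ reverse both orders or apply the exchange involution, for instance to arrange that $n\in J_1$ forces $m\in J_2$. Your claim that the deletion always leaves ``at most one face of higher codimension'' inside each maximal simplex is false without these normalizations, so the step on which everything else rests is unproved and, as stated, incorrect.

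Two further points. Your tameness argument assumes that at most one tile of the shelling carries a non-empty Morse face, and this is wrong: already for the handle $\stackrel{\circ}{\Delta}_n\times\Delta_m$ the shelling contains ${m+n-l \choose n-1}$ non-basic Morse tiles for each $l\in\{2,\dots,m\}$, so the criterion of Proposition \ref{proptame} cannot be invoked as you do. The paper instead checks that the vertices of every Morse face are the greatest of their simplex for the lexicographic order on $\{0,\dots,n\}\times\{0,\dots,m\}$, which gives tameness directly from Definition \ref{deftame}. Finally, you restrict from the outset to basic tiles, whereas the second assertion of the theorem concerns arbitrary Morse tiles: a critical tile of intermediate index has a non-empty Morse face, so the equivalence ``critical tile occurs iff both factors are critical, with index the sum of the indices'' requires the analysis of the non-basic and mixed cases (intersections with $\Delta_{\{k_1,\dots,n\}}\times\Delta_{[m]}$ and $\Delta_{[n]}\times\Delta_{\{k_2,\dots,m\}}$), which your proof omits entirely.
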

These triangulations and shellings given by Theorem \ref{theoproducttiles} are inherited from particular total orders on the vertices of the underlying simplices of $T_1$ and $T_2$, see \S \ref{subseccond}. They all have same $h$-vector and satisfy the following duality property.

\begin{defi}
\label{defdualtiles}
Let $\sigma$ be an $n$-simplex and $\sigma_0, \dots , \sigma_n$ be its facets. 
For every $J \subset \{ 0 , \dots , n \}$, the basic tiles $T = \sigma \setminus \bigcup_{j \in J} \sigma_j$ and $\widecheck{T} = \sigma \setminus \bigcup_{j \in  \{ 0 , \dots , n \} \setminus J} \sigma_j$ are said to be dual to each other. 
\end{defi}

\begin{theo}
\label{theoduality}
Let $T_1 , \widecheck{T}_1$ (resp. $T_2 , \widecheck{T}_2$) be two Morse tiles whose underlying basic tiles are dual to each other. Then, the Morse shellings on $T_1 \times T_2$ (resp. $\widecheck{T}_1 \times \widecheck{T}_2$) given by Theorem \ref{theoproducttiles} all have same $h$-vector and satisfy $h(\widecheck{T}_1 \times \widecheck{T}_2) = \widecheck{h}(T_1 \times T_2)$.
\end{theo}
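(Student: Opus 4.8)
The plan is to exploit that $T_1 \times T_2$ and $\widecheck{T}_1 \times \widecheck{T}_2$ sit inside the same product polytope $\sigma_1 \times \sigma_2$ of their common underlying simplices and, by the construction of Theorem \ref{theoproducttiles}, are cut by the same primitive staircase triangulation $\mathcal{T}$; only the removed boundary differs. First I would record the elementary observation forced by Definition \ref{defdualtiles}: the facets of $\sigma_i$ removed to form $T_i$ and those removed to form $\widecheck{T}_i$ partition the facet set of $\sigma_i$. Consequently the removed subsets $R = (\text{removed facets of }T_1)\times\sigma_2 \cup \sigma_1\times(\text{removed facets of }T_2)$ and its hatted analogue $\widecheck{R}$ are complementary at the top-dimensional level: every facet of $\sigma_1\times\sigma_2$ lies in exactly one of $R$, $\widecheck{R}$, and together they cover $\partial(\sigma_1\times\sigma_2)$.

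Next I would set up a facet-complementation bijection. Each maximal simplex $\theta$ of $\mathcal{T}$ has $N+1$ facets, with $N=n_1+n_2$, and the tiling of $T_1\times T_2$ turns $\theta$ into a tile by declaring every facet of $\theta$ either kept or missing: a facet is missing either because it is an interior facet carried by the neighbouring tile, or because it is a boundary facet lying in $R$. I would then define the candidate tiling of $\widecheck{T}_1\times\widecheck{T}_2$ by flipping this status on every facet of every $\theta$. Complementarity of $R$ and $\widecheck{R}$ shows the flip does the right thing on boundary facets, while on each interior facet, shared by two simplices and carried by exactly one side, the flip merely reassigns it to the other side, so the flipped family is again a partition. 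The effect on orders is purely numerical: since the kept and missing facets of $\theta$ are exchanged, the order of the flipped tile is $N+1$ minus the order of the original tile. Hence order $k$ tiles correspond bijectively to order $N+1-k$ tiles, giving $h_k(T_1\times T_2)=h_{N+1-k}(\widecheck{T}_1\times\widecheck{T}_2)$, which is exactly $h(\widecheck{T}_1\times\widecheck{T}_2)=\widecheck{h}(T_1\times T_2)$.

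The assertion that all shellings furnished by Theorem \ref{theoproducttiles} share a single $h$-vector I would deduce from the explicit combinatorial description of the staircase shellings established in \S\ref{secshellings}: the number of tiles of each order depends only on the dimensions and orders of $T_1,T_2$, not on the particular total orders chosen. Since the facet-complementation above is insensitive to the shelling order, it relates any such shelling of $T_1\times T_2$ to a tiling of $\widecheck{T}_1\times\widecheck{T}_2$ with the reversed $h$-vector, and it remains only to identify this tiling with the dual construction.

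I expect the main obstacle to be verifying that the flipped family is not merely a partition by the correct multiset of orders, but an honest Morse tiling, indeed one of the shellings produced by Theorem \ref{theoproducttiles} for $\widecheck{T}_1\times\widecheck{T}_2$, so that the first part of the statement applies to it. Concretely, one must check that each flipped tile is a genuine Morse tile with a well-defined Morse face, and that the filtration and closure conditions of Definitions \ref{defMorsetileable} and \ref{defMorseshellable} survive the flip, most naturally by recognizing the flip as the staircase construction performed with the reversed total orders on the vertices of $\sigma_1$ and $\sigma_2$. The bookkeeping of the Morse faces under complementation, and the matching with the orientation data of the dual construction, are where the care is needed; the order-reversal count itself is immediate once the bijection is in place.
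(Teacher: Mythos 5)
Your facet-complementation idea captures the right numerology (orders summing to $N+1=m+n+1$), but the central mechanism does not match the construction you are supposed to be analysing, and the step you defer as "identify this tiling with the dual construction" is exactly where the argument breaks. The duality the paper actually uses is the staircase involution $I \mapsto \check{I}$, realised by the palindromic automorphism ${\cal P}$ of Lemma \ref{lemmapalindautom}: it pairs the tile supported on $\Delta_I$ in $T_1 \times T_2$ with the tile supported on the \emph{different} simplex $\Delta_{\check{I}}$ in $\widecheck{T}_1 \times \widecheck{T}_2$, and the order formula (\ref{eqnarrayorder}) then gives $\ord\big(T_I \cap (T_1\times T_2)\big) + \ord\big(T_{\check{I}} \cap (\widecheck{T}_1\times\widecheck{T}_2)\big) = m+n+1$ by a direct count. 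Your flip instead fixes each maximal simplex $\theta$ and complements its facet statuses. These are not the same bijection, and your flip is \emph{not} "the staircase construction performed with the reversed total orders": already for $T_1, T_2$ closed edges, the flip assigns the open diagonal of the square to the triangle that the reversed-order staircase tiling does not assign it to. So even if the flipped family is a legitimate Morse tiling of $\widecheck{T}_1 \times \widecheck{T}_2$, it is a different tiling from the one produced by Theorem \ref{theoproducttiles}, and the statement to be proved concerns the latter. To close this you would need to know that any two such Morse tilings of $\widecheck{T}_1 \times \widecheck{T}_2$ have the same $h$-vector; for genuinely Morse (non-$h$) tilings no such uniqueness is available — the paper proves the independence it needs (Propositions \ref{propMorsetobasic} and \ref{propsamemvector}) by a nontrivial induction deleting removed facets one at a time, via the recursions (\ref{eqnarraymtilde}) and (\ref{eqnarraymtilde2}), and this is precisely the "same $h$-vector" half of the theorem that you also assert by appeal to \S\ref{secshellings}, where it is in fact not established prior to this proof.

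Two further points would still need real work even for the flip itself. First, a partition of $T_1\times T_2$ into tiles is a statement about \emph{all} open faces, not just open facets; complementing facet statuses reassigns codimension-one faces cleanly, but whether each open face of codimension $\geq 2$ lands in exactly one flipped tile is a genuine combinatorial claim about the links of those faces, not a formality. Second, several of the tiles produced by Theorem \ref{theoproducttiles} are Morse tiles with nonempty Morse faces of codimension $\geq 2$ (e.g.\ the closed-times-open case), and the dual product also requires such tiles; a rule that only complements facets cannot produce or track Morse faces, so the "bookkeeping" you flag is not a deferred detail but a missing construction. The paper sidesteps all of this by first showing (Proposition \ref{propMorsetobasic}) that Morse faces never contribute to the order, reducing to basic tiles, and then working entirely with the explicit order formula and the involution $I \mapsto \check{I}$ rather than with a geometric complementation of tiles.
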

In Theorem \ref{theoduality}, $\widecheck{h}(T_1 \times T_2)$ denotes the image of the $h$-vector $h(T_1 \times T_2)$ under the palindromic automorphism defined in \S \ref{subsecpalindrome}.

Theorem \ref{theoproducttiles} provides in particular Morse shellings on every handle, whatever its index is, where by handle of index $k$ and dimension $n$, we mean the product $\stackrel{\circ}{\Delta}_k \times \Delta_{n-k}$ as defined in \cite{SaWel2}. Such a Morse shelling has already been obtained in index $1$ and $n-1$, see Corollary $3.17$ of \cite{SaWel2}.

\begin{cor}
\label{corhandle}
For every $0 \leq k \leq n$, the handle $\stackrel{\circ}{\Delta}_k \times \Delta_{n-k}$ carries Morse shellable primitive triangulations using a unique critical tile, of index $k$. $\square$
\end{cor}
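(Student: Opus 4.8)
The plan is to read the handle $\stackrel{\circ}{\Delta}_k \times \Delta_{n-k}$ as the product of two single basic tiles and then invoke Theorem \ref{theoproducttiles} directly. First I would identify each factor as a Morse (in fact basic) tile. The open $k$-simplex $\stackrel{\circ}{\Delta}_k$ is the basic tile of dimension $k$ obtained by deleting all $k+1$ of its facets, hence of order $k+1$; by the discussion following Definition \ref{defMorsetile}, among basic tiles the open simplex is the critical one of maximal index, so $\stackrel{\circ}{\Delta}_k$ is critical of index $k$. Symmetrically, the closed simplex $\Delta_{n-k}$ is the basic tile of order $0$, which is critical of index $0$. Thus both factors are critical tiles.

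Next I would simply apply the Morse part of Theorem \ref{theoproducttiles} to $T_1 = \stackrel{\circ}{\Delta}_k$ and $T_2 = \Delta_{n-k}$. Since both are critical, the theorem yields Morse shellable primitive triangulations of the product that use a unique critical tile, and its index is the sum of the indices of $T_1$ and $T_2$, namely $k + 0 = k$. As the product has dimension $k + (n-k) = n$, this unique critical tile has index $k$ inside an $n$-dimensional triangulation, which is exactly the assertion. The tameness and pure dimensionality granted by Theorem \ref{theoproducttiles} come for free and need not even be invoked for the present statement.

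The only point requiring care, and presumably the reason the statement is recorded separately as a corollary rather than folded into Theorem \ref{theoproducttiles}, is the index bookkeeping: one must check that the open $k$-simplex contributes index $k$ and not its order $k+1$. This is consistent with the normalisation $h_{n+1}(\tau) = c_n(\tau)$ recorded after Definition \ref{defmvector}, under which an open $m$-simplex of order $m+1$ is counted as a critical tile of index $m$. Once this identification is fixed there is no residual obstacle: the corollary is the immediate specialisation of Theorem \ref{theoproducttiles} to the case where each of the two factors is a single critical tile, which is precisely why its proof reduces to the cited theorem.
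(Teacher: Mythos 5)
Your proposal is correct and is exactly the paper's argument: the corollary is stated with a terminal $\square$ because it is the immediate specialisation of Theorem \ref{theoproducttiles} to $T_1 = \stackrel{\circ}{\Delta}_k$ (critical of index $k$) and $T_2 = \Delta_{n-k}$ (critical of index $0$), yielding a unique critical tile of index $k+0=k$. Your index bookkeeping (open $m$-simplex has order $m+1$ but index $m$) is also right.
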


Recall that Theorem $1.4$ of \cite{SaWel2}, which provides Morse shelled triangulations on every closed three-manifold, has been obtained by successive attachments of such Morse shelled triangulated handles. We end this section by giving other remarkable shellings  given by Theorem \ref{theoproducttiles}.

\begin{cor}
\label{corisomtiles}
For every $m,n >0$, $T_n^n \times \Delta_m$ (resp. $T_1^n \times \stackrel{\circ}{\Delta}_m$) carries shellable primitive triangulations using basic tiles which are all isomorphic to each other, of order $n$ (resp. of order $m+1$). 
\end{cor}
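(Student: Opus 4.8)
The plan is to deduce both statements from Theorem \ref{theoproducttiles} and its duality refinement, reducing everything to a single combinatorial count on the simplices of a staircase triangulation.

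First I would record the reduction. In each case one factor is a regular basic tile ($T^n_n$ and $T^n_1$ are regular, of orders $n$ and $1$), so Theorem \ref{theoproducttiles} already provides shellable primitive triangulations on $T^n_n\times\Delta_m$ and on $T^n_1\times\stackrel{\circ}{\Delta}_m$ whose shellings use only regular basic tiles, all of the pure dimension $n+m$. Since two basic tiles of equal dimension and order are isomorphic (see after Definition \ref{defbasictile}), it suffices to show that every tile of the first shelling has order $n$ and every tile of the second has order $m+1$. Moreover these two products are dual in the sense of Definition \ref{defdualtiles}: taking $J=\{0,\dots,n-1\}$ gives $\widecheck{T^n_n}=T^n_1$, while $\widecheck{\Delta_m}=\widecheck{T^m_0}=T^m_{m+1}=\stackrel{\circ}{\Delta}_m$. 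Hence by Theorem \ref{theoduality} one has $h(T^n_1\times\stackrel{\circ}{\Delta}_m)=\widecheck{h}(T^n_n\times\Delta_m)$. As the palindromic involution on $\R^{n+m+2}$ sends the entry of index $n$ to that of index $m+1$, it is enough to treat the first product and show its $h$-vector is concentrated in degree $n$, i.e. that all its tiles have order $n$; concentration in degree $m+1$ then follows for the second product.

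Next I would make the tiles explicit on the staircase triangulation of $\Delta_n\times\Delta_m$ underlying Theorem \ref{theoproducttiles}. Its maximal simplices $\Delta_\pi$ are indexed by the $\binom{n+m}{n}$ monotone lattice paths $\pi$ from $(0,0)$ to $(n,m)$, each step being horizontal or vertical; write $d_i$ for the number of vertical steps of $\pi$ at horizontal level $i\in\{0,\dots,n\}$, so $\sum_i d_i=m$. The order of the tile carried by $\Delta_\pi$ is the number of its missing facets, and a facet is missing for exactly one of two reasons: either it lies in the removed boundary region of $T^n_n\times\Delta_m$, which here is $\bigcup_{i=0}^{n-1}\big(\Delta_{[n]\setminus\{i\}}\times\Delta_m\big)$, or it is an interior facet shared with an earlier simplex of the shelling. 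Deleting a vertex $v$ of $\Delta_\pi$ produces a facet inside $\Delta_{[n]\setminus\{i\}}\times\Delta_m$ iff $v$ is the unique vertex of $\pi$ at level $i$, that is iff $d_i=0$, and such a facet is removed precisely when $i<n$. Thus the number of removed boundary facets of $\Delta_\pi$ equals $\#\{i\in\{0,\dots,n-1\}: d_i=0\}$.

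Then comes the count of shared interior facets, which is where the explicit shelling order enters and which I expect to be the main obstacle. The interior facets of $\Delta_\pi$ are the diagonal flips at the corners of $\pi$, and I would check, directly from the total order on the vertices fixed in \S\ref{subseccond}, that the shelling order produced by Theorem \ref{theoproducttiles} is the one for which flipping a descent of $\pi$ (a vertical step immediately followed by a horizontal step) to an ascent yields an \emph{earlier} simplex. Granting this, the number of interior facets of $\Delta_\pi$ shared with an earlier simplex equals its number of descents, and the decisive elementary identity is
$$\#\{\text{descents of }\pi\}=\#\{i\in\{0,\dots,n-1\}: d_i\ge 1\},$$
valid because at each level $i<n$ the path leaves by a horizontal step, so its last vertical step there (if any) is its only descent at that level, while level $n$ contributes none. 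Adding the two contributions gives, for every $\pi$,
$$\mathrm{order}(\Delta_\pi)=\#\{i<n: d_i=0\}+\#\{i<n: d_i\ge1\}=n,$$
so all tiles of $T^n_n\times\Delta_m$ have order $n$, and by the duality reduction all tiles of $T^n_1\times\stackrel{\circ}{\Delta}_m$ have order $m+1$; in each case they are pairwise isomorphic. The genuinely delicate point is the identification of the construction's shelling order with this descent-earlier order: under the opposite (ascent-earlier) convention the count would read $n-[d_0\ge1]+[d_n\ge1]$ and fail to be constant, so the proof really relies on verifying that the tame vertex order of \S\ref{subseccond} realises the descent-earlier shelling.
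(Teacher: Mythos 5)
Your argument is correct. For the first product it is essentially the paper's own computation in different clothing: the paper simply evaluates its order formula (\ref{eqnarrayorder}) with $J_1=\{0,\dots,n-1\}$ and $J_2=\emptyset$, which says precisely that each level $i<n$ contributes one missing facet to $T_I\cap(T^n_n\times\Delta_m)$ — an interior one shared with an earlier staircase when $\#I_i>1$, a removed boundary one when $\#I_i=1$ — so every tile has order $n$. Your lattice-path bookkeeping (descents versus levels with $d_i=0$) re-derives this special case of (\ref{eqnarrayorder}) from scratch, and your verification that the lexicographic order on $(e_I(0),\dots,e_I(n-1))$ is the ``descent-earlier'' order is exactly the content of the proof of Corollary \ref{corprimtriangulations}, so the point you flag as delicate is already settled in the paper. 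Where you genuinely diverge is the second product: the paper runs the same direct computation again with $T'_1=\Delta_{[n]}\setminus\Delta_{\{1,\dots,n\}}$ and $T'_2=\stackrel{\circ}{\Delta}_m$ and finds the constant order $m+1$, whereas you invoke Theorem \ref{theoduality} with $\widecheck{T^n_n}=T^n_1$ and $\widecheck{\Delta_m}=\stackrel{\circ}{\Delta}_m$ to transport the answer across the palindromic involution $n\mapsto m+1$. Both routes are valid; the duality argument is slicker and explains \emph{why} the two statements are mirror images, at the cost of quoting the heavier Theorem \ref{theoduality} (and of needing Theorem \ref{theoproducttiles} separately to know the tiles of the second shelling are basic, since duality only controls the $h$-vector — a point you do handle by citing the first part of Theorem \ref{theoproducttiles} with the regular factor $T^n_1$). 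The paper's double computation is more pedestrian but self-contained at the level of formula (\ref{eqnarrayorder}).
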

Recall that $T_n^n$ (resp. $T_1^n$) denotes a basic tile of dimension $n$ and order $n$ (resp. order one), see \S \ref{subsecMorseshel}.

\begin{cor}
\label{corpalindromictiling}
Let $T_1$ (resp $T_2$) be a Morse tile of odd dimension $n$ (resp. $m$) and of order $\frac{m+1}{2}$ (resp. $\frac{n+1}{2}$). Then, $T_1 \times T_2$ carries Morse shellable primitive triangulations with palindromic $h$-vector. Moreover, if the tiles are basic, these Morse shellings can be chosen to be shellings. 
\end{cor}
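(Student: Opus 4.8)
The plan is to deduce the palindromy directly from the duality phenomenon of Theorem~\ref{theoduality}, the order hypotheses being exactly what forces $T_1\times T_2$ to be carried to a product isomorphic to itself under this duality.

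First I would record the Morse shellability and the well-definedness of the invariant. By Theorem~\ref{theoproducttiles}, $T_1\times T_2$ carries Morse shellable primitive triangulations, all of whose shellings share one and the same $h$-vector $h(T_1\times T_2)$, and these are tame and pure dimensional. For the last sentence of the statement, the prescribed order of each factor of dimension $d$ is the integer $\tfrac{d+1}{2}$, which is strictly between $0$ and $d+1$ since $d$ is odd and positive; hence when $T_1$ and $T_2$ are basic they are neither closed nor open simplices, so they are regular. As one of the two factors is then regular, Theorem~\ref{theoproducttiles} produces shellings using only regular tiles, that is, genuine shellings.

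Next I would bring in the duality. Let $\widecheck T_1,\widecheck T_2$ be Morse tiles whose underlying basic tiles are dual, in the sense of Definition~\ref{defdualtiles}, to those of $T_1,T_2$. Theorem~\ref{theoduality} gives $h(\widecheck T_1\times\widecheck T_2)=\widecheck h(T_1\times T_2)$, so it suffices to check that the right-hand product has the same $h$-vector as the original one, i.e.
\[
h(\widecheck T_1\times\widecheck T_2)=h(T_1\times T_2).
\]
This is where the order hypotheses enter: the underlying basic tile of a factor of dimension $d$ and order $k$ is isomorphic to its dual exactly when $k=(d+1)-k$, that is when $2k=d+1$, which is precisely the prescribed order. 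Thus $\widecheck T_i$ and $T_i$ have isomorphic underlying basic tiles, and, choosing the self-dual isomorphism of each factor so as to respect its Morse face, $\widecheck T_1\times\widecheck T_2$ is isomorphic to $T_1\times T_2$. Since the shellings of Theorem~\ref{theoproducttiles} are built from total orders on the vertices of the underlying simplices, such an isomorphism transports one shelling to another in the same family; as all of them share a common $h$-vector, the two products have equal $h$-vectors. Combining with the displayed identity yields $h(T_1\times T_2)=\widecheck h(T_1\times T_2)$, i.e. $h(T_1\times T_2)$ is palindromic.

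The main obstacle is the self-isomorphism step. While the duality of the underlying basic tiles is immediate from $2k=d+1$, one must check that the accompanying Morse faces are matched by a self-dual isomorphism of each factor, and that the common $h$-vector of Theorem~\ref{theoproducttiles} genuinely depends only on the isomorphism types of the tiles and not on the auxiliary vertex orderings; the latter is exactly the invariance asserted in that theorem. A cleaner route, which I would prefer if the construction cooperates, is to exhibit a single vertex-order--preserving isomorphism $T_1\times T_2\to\widecheck T_1\times\widecheck T_2$ intertwining the corresponding staircase shellings of \S\ref{subsectriangulations}, so that the equality of $h$-vectors becomes tautological and the palindromy follows at once from Theorem~\ref{theoduality}.
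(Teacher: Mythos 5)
Your proposal is correct and follows essentially the same route as the paper: existence and the genuine-shelling claim come from Theorem \ref{theoproducttiles} (the order $\tfrac{d+1}{2}$ forces regularity in the basic case), and palindromy follows because each factor's underlying basic tile is isomorphic to its dual, so $h(T_1\times T_2)=h(\widecheck{T}_1\times\widecheck{T}_2)=\widecheck{h}(T_1\times T_2)$ by Theorem \ref{theoduality}. The "obstacle" you flag about matching Morse faces is already absorbed by the paper's machinery, since Theorem \ref{theoduality} (via Proposition \ref{propMorsetobasic}) makes the $h$-vector depend only on the underlying basic tiles.
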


\begin{proof}
Theorem \ref{theoproducttiles} provides a Morse shelled triangulation on $T_1 \times T_2$ and even a shelled triangulation if these tiles are basic, since they are regular.
By hypothesis, the basic tile underlying $T_1$ is isomorphic to its dual and likewise, the basic tile underlying $T_2$ is isomorphic to its dual. The $h$-vectors of $T_1 \times T_2$ and $\widecheck{T}_1 \times \widecheck{T}_2$ thus coincide, while they are dual to each other by Theorem \ref{theoduality}. They must then be palindromic. 
\end{proof}

\section{Cartesian product of two simplices}
\label{sectriangulations}

The cartesian product of two simplices is a structure of simplicial complex on their product which is inherited from total orders on their vertices, see Definition $II. 8.8$ of \cite{EilSteen}. We study these primitive triangulations in this section, whose simplices are associated to staircases, together with their shellings, see \S \ref{subsectriangulations}. The Cayley trick helps to visualize them, via the mixed decompositions induced on one of the simplices, see \S \ref{subsecmixeddecomp} and \cite{San, HRS}.

\subsection{Staircases}
\label{subsecstraircases}

Let $m,n$ be two non-negative integers. We denote by $C(n,m)$ the set of increasing -not strictly increasing- functions $f : \{ 0, \dots , n \} \to \{ 0, \dots , m \}$ such that $f(n)=m$ and by $N(n,m)$ its cardinality. We recall that.

\begin{lemma}
\label{lemmadim}
For every $m,n \geq 0$, $N(n,m) = {n+m \choose n}.$
\end{lemma}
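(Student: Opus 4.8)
The plan is to first dispose of the constraint $f(n)=m$ and then count weakly increasing sequences by a standard order-shift bijection. Since every $f \in C(n,m)$ satisfies $f(n)=m$, the datum of $f$ is equivalent to that of the restriction $(f(0), \dots, f(n-1))$, an arbitrary weakly increasing sequence $0 \le f(0) \le \dots \le f(n-1) \le m$ of length $n$ with values in $\{0, \dots, m\}$, the inequality $f(n-1) \le m = f(n)$ being automatic. Thus $N(n,m)$ equals the number of such sequences, equivalently the number of size-$n$ multisets drawn from the $m+1$ values $\{0, \dots, m\}$.

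First I would realize this count through the shift $b_i = f(i) + i$ for $i \in \{0, \dots, n-1\}$. Weak monotonicity of $(f(i))_i$ is equivalent to strict monotonicity of $(b_i)_i$, since $f(i+1) \ge f(i)$ holds iff $b_{i+1} = f(i+1) + i + 1 > f(i) + i = b_i$, and the bounds $0 \le f(i) \le m$ translate into $0 \le b_0 < \dots < b_{n-1} \le m+n-1$. Hence $f \mapsto \{b_0, \dots, b_{n-1}\}$ is a bijection from $C(n,m)$ onto the set of $n$-element subsets of $\{0, 1, \dots, m+n-1\}$, a set of cardinality $m+n$. Therefore $N(n,m) = \binom{m+n}{n}$, as claimed.

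Alternatively, one can argue by a double induction on $(n,m)$ using the Pascal-type recurrence $N(n,m) = N(n-1,m) + N(n,m-1)$, which is established directly by partitioning $C(n,m)$ according to whether $f(0)=0$ or $f(0)\ge 1$: in the first case, deleting the initial value and reindexing yields a bijection with $C(n-1,m)$, and in the second, subtracting $1$ from every value yields a bijection with $C(n,m-1)$. Together with the base values $N(0,m)=N(n,0)=1$, this reproduces $\binom{m+n}{n}$ and matches the recursive flavor of the staircase and walk counts used later in the paper. Either way there is no genuine obstacle; the only point requiring care is the bookkeeping of the forced endpoint $f(n)=m$, which lowers the number of free values from $n+1$ to $n$ and is responsible for the off-by-one in the index shift.
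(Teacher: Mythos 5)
Your main argument is correct and is essentially the paper's own proof: both use the shift $f(i)\mapsto f(i)+i$ to convert weakly increasing sequences into strictly increasing ones, hence into $n$-element subsets of an $(m+n)$-element set; the only cosmetic difference is that you discard the forced value $f(n)=m$ before shifting, whereas the paper shifts all $n+1$ values and counts $(n+1)$-subsets of $\{0,\dots,m+n\}$ constrained to contain $m+n$. The alternative Pascal-recurrence argument you sketch is also valid but unnecessary.
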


\begin{proof}
To every $f \in C(n,m)$ we may associate $\tilde{f} : k \in \{ 0, \dots , n \}  \mapsto f(k)+k \in \{ 0, \dots , m+n \}$. This correspondence between $C(n,m)$ and the set of strictly increasing functions $\{ 0, \dots , n \} \to \{ 0, \dots , m+n \}$ such that $f(n)=m+n$ is bijective. Moreover, the image of such a function is a subset of $\{ 0, \dots , m+n \}$ containing $m+n$ and of cardinality $n+1$ while every subset sharing these properties defines a strictly increasing map $\{ 0, \dots , n \} \to \{ 0, \dots , m+n \}$ such that $f(n)=m+n$. The result follows. 
\end{proof}

The space $C(n,m)$ is equipped with the involution $f \in C(n,m) \mapsto \check{f} \in C(n,m)$, where for every $j \in \{ 0 , \dots , n-1 \}$, $\check{f} (j) = m - f(n-1-j)$ and $\check{f} (n) = m$. Also, the lexicographic order on the n-tuples of integers induces a total order on $C(n,m)$, so that for every $f,g \in C(n,m)$, $f \leq g$ iff $(f(0) , \dots , f(n-1)) \leq (g(0), \dots , g(n-1))$. The minimum of $C(n,m)$ is thus a function which vanishes on 
$\{ 0 , \dots , n-1 \}$ while its maximum is the constant function equal to $m$.

Likewise, we denote by $I(n,m)$  the set of collections $I=(I_j)_{j \in \{ 0, \dots , n \} }$ of intervals $I_j = \{ i \in \{ 0, \dots , m \} \, \vert \, b_I (j) \leq i \leq  e_I (j) \}$ which cover $\{ 0, \dots , m \}$ and satisfy $e_I (j) = b_I (j+1)$ for every $0 \leq j < n$. In particular, $b_I (0) = 0$ and $e_I (n) = m$. This space of staircases, see \S $7.3.D$ of \cite{GKZ}, is equipped with the involution $I=(I_j)_{j \in \{ 0, \dots , n \}} \mapsto \check{I} =(\check{I}_j)_{j \in \{ 0, \dots , n \}}$, where for every $j \in \{ 0, \dots , n \}$, $\check{I}_j = \{ m - e_I (n-j) , \dots , m - b_I (n-j) \}$, so that $b_{\check{I}} (j) = m - e_I (n-j) $ and $e_{\check{I}} (j) = m - b_I (n-j) $. This space also inherits a total order from the lexicographic order, so that for every $I,J \in I(n,m)$, $I \leq J$ iff $(e_I (0) , \dots , e_I (n-1)) \leq (e_J (0) , \dots , e_J (n-1)) $.

These spaces of functions and staircases are in bijective correspondence. Namely, for every $f \in C(n,m)$, let us denote by  $I^f$  the element of $I(n,m)$ such that $e_{I^f}  = f$.

\begin{lemma}
\label{lemmafI}
The maps $f \in C(n,m) \mapsto I^f \in I(n,m)$ and $I \in I(n,m) \mapsto e_I \in C(n,m) $ are bijective, $\Z/2\Z$-equivariant, order preserving and inverse one with respect to the other.
\end{lemma}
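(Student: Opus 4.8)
The plan is to build the whole statement on the single observation that a staircase $I \in I(n,m)$ is determined by its end-function $e_I \colon j \mapsto e_I(j)$ alone. Indeed, the chaining relation $b_I(j) = e_I(j-1)$ for $1 \leq j \leq n$, together with $b_I(0) = 0$, recovers every begin-point from the end-points, and since the intervals are genuine (so $b_I(j) \leq e_I(j)$) the chaining forces $e_I(j-1) \leq e_I(j)$; thus $e_I$ is increasing with $e_I(n) = m$, that is $e_I \in C(n,m)$. This shows at once that $\Psi \colon I \mapsto e_I$ is well-defined and exhibits $\Phi \colon f \mapsto I^f$ as the inverse reconstruction.

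I would record the mutual-inverse and bijectivity assertions first, as they follow immediately from this observation. For the well-definedness of $\Phi$ I set $b_{I^f}(0) = 0$ and $b_{I^f}(j) = f(j-1)$ for $1 \leq j \leq n$, and check that the resulting intervals are non-empty ($f$ increasing), chain correctly ($e_{I^f}(j) = f(j) = b_{I^f}(j+1)$) and cover $\{0, \dots, m\}$ (as $b_{I^f}(0) = 0$, $e_{I^f}(n) = m$ and consecutive intervals share an endpoint), so that $I^f \in I(n,m)$. Then $\Psi \circ \Phi = \mathrm{id}$ is immediate from $e_{I^f} = f$, while $\Phi \circ \Psi = \mathrm{id}$ holds because the begin-points of $I^{e_I}$ reconstructed from $e_I$ agree, by the chaining relation, with those of $I$; bijectivity follows.

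Order preservation is then a matter of unwinding definitions: by construction $I \leq J$ in $I(n,m)$ means $(e_I(0), \dots, e_I(n-1)) \leq (e_J(0), \dots, e_J(n-1))$ lexicographically, which is exactly the condition defining $e_I \leq e_J$ in $C(n,m)$; since $\Psi(I) = e_I$ this yields $I \leq J \Leftrightarrow \Psi(I) \leq \Psi(J)$, and $\Phi = \Psi^{-1}$ preserves order as well.

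The only genuinely computational point, and the one I expect to be the main obstacle, is the $\Z/2\Z$-equivariance $e_{\check I} = \check{(e_I)}$, which amounts to keeping the indices straight. Starting from $e_{\check I}(j) = m - b_I(n-j)$, I would split into two cases: for $0 \leq j \leq n-1$ one has $n-j \geq 1$, so $b_I(n-j) = e_I(n-1-j)$ by chaining and hence $e_{\check I}(j) = m - e_I(n-1-j) = \check{(e_I)}(j)$; for $j = n$ one has $b_I(0) = 0$, giving $e_{\check I}(n) = m = \check{(e_I)}(n)$. This shows $\Psi$ intertwines the two involutions, and so does $\Phi = \Psi^{-1}$, completing the proof.
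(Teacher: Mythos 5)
Your proof is correct and follows essentially the same route as the paper: the order-preservation and mutual-inverse claims are read off from the definitions, and the equivariance is the same index computation $e_{\check I}(j) = m - b_I(n-j) = m - e_I(n-1-j)$ via the chaining relation, with the $j=n$ case handled separately. The only difference is that you spell out the well-definedness of $f \mapsto I^f$ and the reconstruction of begin-points from end-points, which the paper leaves implicit.
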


\begin{proof}
The maps are order preserving and inverse one with respect to the other by definition. They are thus bijective as well. Now, let $f \in C(n,m)$, we have to check that $I^{\check{f}} = \widecheck{I^f}$. For every $j \in \{ 0, \dots , n \}$, $e_{I^{\check{f}}} (j) = \check{f} (j) = m-f(n-1-j)$ while $e_{\widecheck{I^f}} (j) = m - b_{I^f} (n-j)  = m - e_{I^f} (n-1-j) = m-f(n-1-j)$. Hence the result. 
\end{proof}

Let us finally observe that exchanging the roles of $n$ and $m$ defines an involution $I(n,m) \to I(m,n)$. 

\begin{lemma}
\label{lemmaJi}
For every $I \in I(n,m)$ and every $i \in \{0, \dots , m\}$, set $J_i = \{ j \in \{0, \dots , n\} \, \vert \, i \in I_j \}$. Then, $J=(J_i)_{i \in \{0, \dots , m\}}$ belongs to $I(m,n)$ and the correspondence $I \in I(n,m) \mapsto J \in I(m,n)$ is bijective.
\end{lemma}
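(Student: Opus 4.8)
The plan is to regard $J$ as the transpose of the incidence pattern of $I$ and to treat separately the two assertions: that $J$ lies in $I(m,n)$, and that $I \mapsto J$ is bijective. I would dispose of bijectivity first and conceptually. Directly from the definition of $J_i$ one has the symmetric incidence relation $i \in I_j \Leftrightarrow j \in J_i$. Hence the analogous recipe applied to a staircase in $I(m,n)$, namely $J \mapsto \big( \{ i \in \{0,\dots,m\} \mid j \in J_i \} \big)_{j \in \{0,\dots,n\}}$, returns exactly $(I_j)_j = I$, and symmetrically. Once the membership statement is established (read with $n$ and $m$ exchanged, it guarantees this reverse recipe lands in $I(n,m)$), these two mutually inverse maps exhibit $I \mapsto J$ as a bijection $I(n,m) \to I(m,n)$.

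It remains to check $J \in I(m,n)$. First I would record the monotonicity inherent in a staircase: since $b_I(j) = e_I(j-1)$ for $j \geq 1$, $b_I(0) = 0$, and $e_I \in C(n,m)$ is nondecreasing, both $b_I$ and $e_I$ are nondecreasing and $b_I(j) \leq e_I(j)$, so every $I_j$ is a nonempty interval. For fixed $i$ I would then write $J_i = \{ j \mid b_I(j) \leq i \} \cap \{ j \mid e_I(j) \geq i \}$, the intersection of an initial segment with a final segment, hence an interval; it is nonempty because the $I_j$ cover $\{0,\dots,m\}$. A brief check that the extremal index of each one-sided constraint also satisfies the other constraint gives the formulas $b_J(i) = \min\{ j \mid e_I(j) \geq i \}$ and $e_J(i) = \max\{ j \mid b_I(j) \leq i \}$. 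From these, $b_J(0) = 0$ follows from $b_I(0) = 0$, $e_J(m) = n$ from $e_I(n) = m$, and the covering of $\{0,\dots,n\}$ by the $J_i$ from the nonemptiness of each $I_j$.

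The crux is the sharing equation $e_J(i) = b_J(i+1)$ for $0 \leq i < m$, that is $\max\{ j \mid b_I(j) \leq i \} = \min\{ j \mid e_I(j) > i \}$, which I would prove by a double inequality driven by the relation $b_I(j+1) = e_I(j)$. Writing $p = \max\{ j \mid b_I(j) \leq i \}$, on one hand the top of row $p$ exceeds $i$ (since $b_I(p+1) = e_I(p) > i$ by maximality, or $e_I(n) = m > i$ when $p = n$), giving $p \geq \min\{ j \mid e_I(j) > i \}$; on the other hand every row below $p$ has top $e_I(j) \leq b_I(p) \leq i$, so none lies in $\{ j \mid e_I(j) > i \}$, giving $p \leq \min\{ j \mid e_I(j) > i \}$. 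Hence the two extremal indices coincide. The main obstacle I anticipate is purely the bookkeeping of boundary cases in this last step — $i = 0$, $i$ close to $m$, and $p$ equal to $0$ or $n$ — where one must appeal to $b_I(0) = 0$ or $e_I(n) = m$ in place of the generic relation $b_I(j+1) = e_I(j)$; away from these the equality is immediate from monotonicity. Together with the first paragraph this completes the proof.
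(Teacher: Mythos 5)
Your proof is correct and follows essentially the same route as the paper's: both establish that each $J_i$ is an interval whose endpoints satisfy $b_J(i+1)=e_J(i)$ using the staircase relation $e_I(j)=b_I(j+1)$ together with monotonicity, and both obtain bijectivity from the symmetric incidence relation $i\in I_j \Leftrightarrow j\in J_i$, which exhibits the transposed recipe as an explicit two-sided inverse. Your min/max formulas for $b_J$ and $e_J$ and the double-inequality argument are just a slightly more explicit packaging of the paper's element-chasing.
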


\begin{proof}
Let $I \in I(n,m)$ and $i \in \{0, \dots , m\}$. We denote by $b_J (i)$ (resp. $e_J (i)$) the least (resp. greatest) element of $J_i$. If $j \in \{ b_J (i) , \dots , e_J (i) \}$, then $b_I (j) \leq b_I (e_J (i)) \leq i$ and $e_I (j) \geq e_I (b_J (i)) \geq i$, so that $b_I (j) \leq i \leq e_I (j)$, that is $i \in I_j$. We deduce that $J_i$ is the interval $\{ b_J (i) , \dots , e_J (i) \}$. Moreover, $b_J (0) = 0$ and $e_J (m) = n$ by definition. Finally, if $j \in J_i$, then either $i = e_I (j)$ and $j+1 = b_I (j+1) \in J_i$ provided $j < n$, or $i < e_I (j)$ and $j = e_J (i)$. It follows that if $i < m$, $b_J (i+1) = e_J (i)$ so that $J \in J(m,n)$. Now, this correspondence $I \in I(n,m) \mapsto J \in I(m,n)$ is bijective. The preimage of an element $J=(J_i)_{i \in \{0, \dots , m\}}$ is the staircase $(I_j)_{j \in \{0, \dots , n\}}$ defined in a similar way, namely for every $j \in \{0, \dots , n\}$, $I_j = \{ i \in \{0, \dots , m\} \, \vert \, j \in J_i \}$. Indeed, we check likewise that $I \in I(n,m)$ and for every $(j,i) \in \{0, \dots , n\} \times \{0, \dots , m\}$, the conditions $i \in I_j$ and $j \in J_i$ are equivalent to each other, so that the maps are inverse one to another. 
\end{proof}

\subsection{Mixed decompositions of the simplex}
\label{subsecmixeddecomp}

Let us recall that $\Delta_{[m]}$ denotes the standard $m$-simplex whose vertices are labelled by the integers $0, \dots, m$. Every $m$-simplex whose vertices are totally ordered becomes canonically isomorphic to $\Delta_{[m]}$. Likewise, for every subset $J$ of $\{0, \dots , m\}$, we denote by $\Delta_J$ the face of $\Delta_{[m]}$ whose vertices belong to $J$.

Let then $I \in I(n,m)$ and $\alpha = (\alpha_0 , \dots , \alpha_n) \in \R_+^{n+1}$ be such that $\alpha_0 + \dots +  \alpha_n = 1$. We set 
$$\Delta_{I, \alpha} = \{ \alpha_0 x_0 + \dots +  \alpha_n x_n \in \Delta_{[m]} \, \vert \, \forall j \in \{ 0, \dots , n \} , \, x_j \in \Delta_{I_j} \}.$$
When all the $\alpha_j$'s equal $\frac{1}{n+1}$, this cell $\Delta_{I, \alpha}$ is thus the rescaled Minkowski sum $\Delta_{I_0} + \dots + \Delta_{I_n}$. 

Likewise, for every  $j \in \{ 0, \dots , n \} $, we denote by $T_{I_j}$ the basic tile $\Delta_{I_j} \setminus \Delta_{I_j \setminus \{ e_I (j) \}}$ with the convention that $\Delta_\emptyset = \emptyset$ and set
$$T_{I, \alpha} = \{ \alpha_0 x_0 + \dots +  \alpha_n x_n \in \Delta_{[m]} \, \vert \, \forall j \in \{ 0, \dots , n-1 \} , \, x_j \in T_{I_j}  \text{ and } x_n \in \Delta_{I_n} \}.$$

\begin{ex}
\label{exMinkowski}
1) If $m=n=2$ and $\alpha = (\frac{1}{3}, \frac{1}{3} , \frac{1}{3})$, then $I (2,2)$ consists of six staircases which, once labelled in the increasing order, are $I^1 = (\{ 0 \} , \{0 \} , \{ 0, 1, 2 \})$, $I^2 = (\{ 0 \} , \{0, 1 \} , \{  1, 2 \})$, $I^3 = (\{ 0 \} , \{0, 1, 2 \} , \{ 2 \})$, $I^4 = (\{ 0, 1 \} , \{1 \} , \{ 1, 2 \})$, $I^5 = (\{ 0, 1 \} , \{1,2 \} , \{ 2 \})$ and $I^6 = (\{ 0, 1, 2 \} , \{2 \} , \{ 2 \})$. The six cells $\big(\Delta_{I^N, \alpha} \big)_{N \in \{1, \dots , 6 \}}$ provide a mixed decomposition of the triangle $\Delta_{[2]}$ and the family $\big(T_{I^N, \alpha} \big)_{N \in \{1, \dots , 6 \}}$  provides a partition of the latter, depicted in Figure \ref{fig22}. \\

 \begin{figure}[h]
   \begin{center}
    \includegraphics[scale=1]{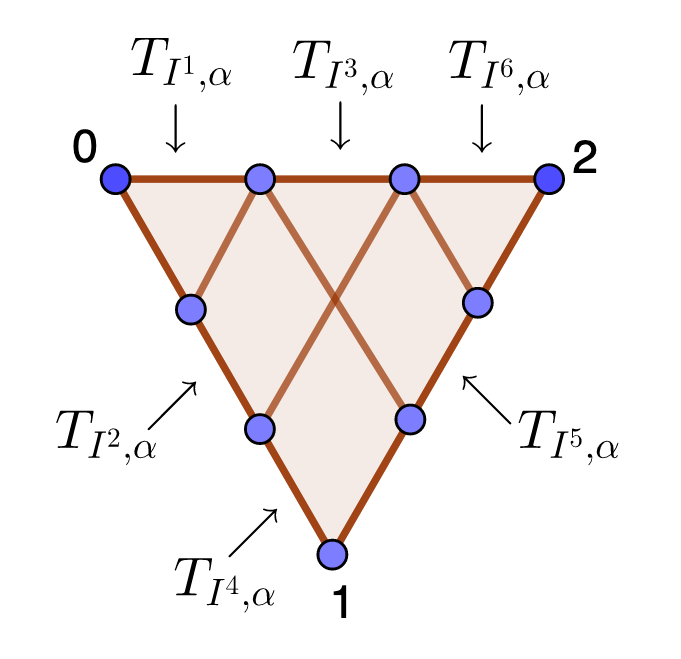}
    \caption{A mixed decomposition of the two-simplex.}
    \label{fig22}
      \end{center}
 \end{figure}

2) If $n=1$, $m=3$ and $\alpha = (\frac{1}{2}, \frac{1}{2})$, then $I(1,3)$ consists of four staircases which, once labelled in the increasing order, are $I^1 = (\{ 0 \} , \{ 0, 1, 2, 3 \})$, $I^2 = (\{ 0, 1 \} , \{  1, 2, 3 \})$, $I^3 = ( \{0, 1, 2 \} , \{ 2,3 \})$ and $I^4 = (\{ 0, 1, 2, 3 \} , \{3 \})$. The corresponding cells $\big(\Delta_{I^N, \alpha} \big)_{N \in \{1, \dots , 4 \}}$ are depicted in Figure \ref{fig13}
and provide a mixed decomposition of the simplex $\Delta_{[3]}$, while the family $\big(T_{I^N, \alpha} \big)_{N \in \{1, \dots , 4 \}}$  provides a partition of the latter, depicted in Figure \ref{fig13bis}. \\

 \begin{figure}[h]
   \begin{center}
    \includegraphics[scale=0.3]{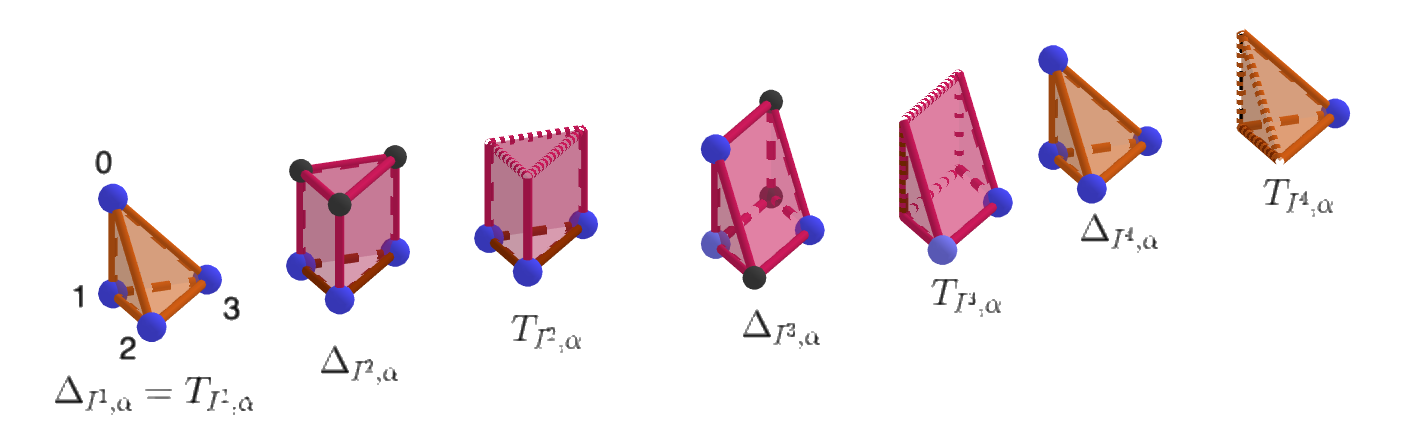}
    \caption{The four cells when $n=1$ and $m=3$.}
    \label{fig13}
      \end{center}
 \end{figure}

 \begin{figure}[h]
   \begin{center}
    \includegraphics[scale=0.3]{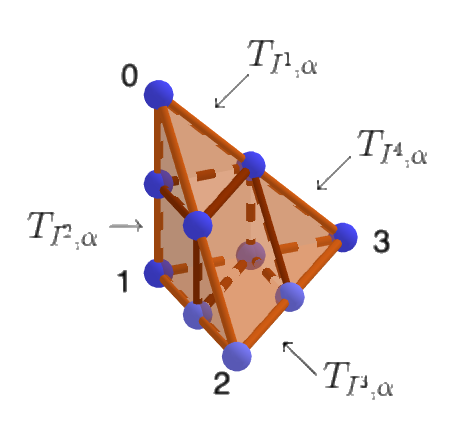}
    \caption{A mixed decomposition of the three-simplex.}
    \label{fig13bis}
      \end{center}
 \end{figure}

3) If $n=2$, $m=3$ and $\alpha = (\frac{1}{3}, \frac{1}{3} , \frac{1}{3})$, then $I(2,3)$ consists of ten staircases labelled in the increasing order by $(I^N)_{N \in \{1, \dots , 10 \}}$. The ten cells $\big(\Delta_{I^N, \alpha} \big)_{N \in \{1, \dots , 10 \}}$ provide a mixed decomposition of the simplex $\Delta_{[3]}$ and the family $\big(T_{I^N, \alpha} \big)_{N \in \{1, \dots , 10 \}}$  provides a partition of the latter, depicted in Figure \ref{fig23}. 
 \begin{figure}[h]
   \begin{center}
    \includegraphics[scale=0.3]{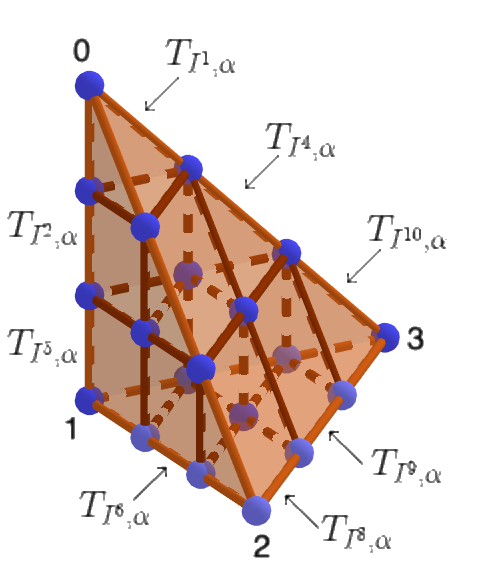}
    \caption{Another mixed decomposition of the three-simplex.}
    \label{fig23}
      \end{center}
 \end{figure}

\end{ex}
The phenomenon observed in Example \ref{exMinkowski} is general, compare \cite{San}.

\begin{theo}
\label{theoMinkowskisum}
Let $m,n$ be two non-negative integers and let $\alpha = (\alpha_0 , \dots , \alpha_n) \in (\R_+^*)^{n+1}$ be such that $\alpha_0 + \dots +  \alpha_n = 1$. Then, the Minkowski cells $\big(\Delta_{I, \alpha} \big)_{I \in I(n,m) }$ provide a mixed decomposition of the simplex $\Delta_{[m]}$ and the family $\big(T_{I, \alpha} \big)_{I \in I(n,m)}$  provides a partition of the latter. Moreover, if we label the staircases of $I(n,m)$ in increasing order by $(I^N)_{N \in \{1, \dots , N(n,m) \}}$, then for every $N \in \{1, \dots , N(n,m) \}$, the unions $\cup_{k=1}^N \Delta_{I^k, \alpha}$ and $\cup_{k=1}^N T_{I^k, \alpha}$ coincide and filtrate $\Delta_{[m]}$. Finally, the intersection of two cells $\Delta_{I, \alpha}$ and $\Delta_{I', \alpha}$, $I, I' \in I(n,m)$, is the face $\sum_{j=0}^n \alpha_j \Delta_{I_j \cap I'_j}$.
\end{theo}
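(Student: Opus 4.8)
The plan is to describe every point of $\Delta_{[m]}$ by a transport between two markings of the segment $[0,1]$ and to read off from it the tile it belongs to. Given $p \in \Delta_{[m]}$ with barycentric coordinates $(p_0, \dots , p_m)$, I would set the cumulative sums $P_i = p_0 + \dots + p_i$ (with $P_{-1} = 0$, $P_m = 1$) and, $\alpha$ being fixed, $A_j = \alpha_0 + \dots + \alpha_j$ (with $A_{-1} = 0$, $A_n = 1$), so that $[0,1]$ is subdivided both into the vertex intervals $[P_{i-1}, P_i]$ and into the weight intervals $[A_{j-1}, A_j]$. First I would check that declaring $\alpha_j (x_j)_i$ to be the length of $[A_{j-1}, A_j] \cap [P_{i-1}, P_i]$ yields points $x_j \in \Delta_{[m]}$ with $p = \sum_j \alpha_j x_j$, and that on the bipartite incidence between buckets $j$ and vertices $i$ — which is a path, since each vertex lies in at most two consecutive buckets, these being the intervals $J_i$ of Lemma \ref{lemmaJi} — this transportation problem is triangular and has a unique solution. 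Consequently, for \emph{any} staircase $I$, a decomposition of $p$ with $x_j \in \Delta_{I_j}$, if it exists, must be this one; this uniqueness is the engine behind both the partition and the intersection formula.

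Next I would prove the partition by the tiles $T_{I, \alpha}$. To $p$ I associate the staircase $I(p)$ with $b_{I(p)}(0) = 0$, $e_{I(p)}(n) = m$ and $e_{I(p)}(j) = b_{I(p)}(j+1) = \min \{ i : P_i \ge A_j \}$ for $j < n$, and I check that the monotonicity $A_{j-1} \le A_j$ makes it a genuine element of $I(n,m)$. Feeding the transport decomposition into the definition of $T_{I, \alpha}$, membership $p \in T_{I(p), \alpha}$ reduces to the positivity of the overlap $[A_{j-1}, A_j] \cap [P_{e_{I(p)}(j)-1}, P_{e_{I(p)}(j)}]$ at the top vertex, which holds by the very definition of $e_{I(p)}(j)$ through $P_{e_{I(p)}(j)-1} < A_j$; moreover this forces $e_{I(p)}(j)$ to be the largest index carrying positive mass in bucket $j$. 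Uniqueness then follows: if $p \in T_{I', \alpha}$ the $x'_j$ coincide with the transport solution by the previous step, so their supports sit in both $I_j$ and $I'_j$, and the top-vertex positivity $(x_j)_{e_{I'}(j)} > 0$ pins down $e_{I'}(j) = e_{I(p)}(j)$ for every $j$, whence $I' = I(p)$. I expect the care here to lie in the degenerate configurations, where some $p_i = 0$ or a weight-breakpoint $A_j$ meets a coordinate-breakpoint $P_i$: there the closed cells overlap along faces, and one must confirm that the half-open rule "remove the facet opposite the top vertex $e_I(j)$" still assigns such a point to exactly one tile.

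With the partition established, the intersection formula follows from the uniqueness of the transport decomposition. The inclusion $\sum_j \alpha_j \Delta_{I_j \cap I'_j} \subseteq \Delta_{I, \alpha} \cap \Delta_{I', \alpha}$ is immediate, and conversely a point of the intersection admits decompositions with $x_j \in \Delta_{I_j}$ and with $x_j \in \Delta_{I'_j}$, which by uniqueness are the same, so the common $x_j$ is supported in $I_j \cap I'_j$ and the point lies in $\sum_j \alpha_j \Delta_{I_j \cap I'_j}$. The mixed decomposition statement is then assembled: each cell $\Delta_{I, \alpha}$ has dimension $\sum_j (e_I(j) - b_I(j)) = e_I(n) - b_I(0) = m$ by telescoping through the relations $e_I(j) = b_I(j+1)$, the cells cover $\Delta_{[m]}$ because the tiles already do, and they meet precisely along the faces just computed.

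Finally, for the filtration I would order $I(n,m)$ lexicographically by $(e_I(0), \dots , e_I(n-1))$ and argue by induction on $N$ that $\bigcup_{k \le N} \Delta_{I^k, \alpha} = \bigcup_{k \le N} T_{I^k, \alpha}$. The crux is that the facets removed to form $T_{I^N, \alpha}$ — those opposite the top vertices $e_{I^N}(j)$, $j < n$ — are exactly the faces of $\Delta_{I^N, \alpha}$ shared with lexicographically smaller cells: lowering some $e_I(j)$ by one unit produces a lex-smaller staircase, and by the intersection formula the common face is the removed facet, while the remaining facets (opposite bottom vertices, or the top of the last bucket) lie either on $\partial \Delta_{[m]}$ or against larger cells. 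Hence $T_{I^N, \alpha}$ is $\Delta_{I^N, \alpha}$ deprived of its intersection with $\bigcup_{k < N} \Delta_{I^k, \alpha}$, which yields both the equality of the two partial unions and the fact that they filtrate $\Delta_{[m]}$, reaching the whole simplex at $N = N(n,m)$. The main obstacle I anticipate is precisely this reconciliation of the half-open tile convention with the lexicographic order — verifying that "opposite the top vertex" really faces the smaller cells, uniformly across the boundary and degenerate strata handled in the partition step.
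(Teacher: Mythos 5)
Your proposal is correct and follows essentially the same route as the paper: your cumulative sums $P_i$ and $A_j$ are exactly the quantities the paper uses to characterize $\Delta_{I,\alpha}$ by the inequalities $P_{e_I(j)-1}\le A_j\le P_{e_I(j)}$ (its equation (\ref{eqndeltaIalpha})), your canonical overlap decomposition reproduces the paper's explicit inductive formula for the $x_j$, and the assignment $e_{I(p)}(j)=\min\{i:P_i\ge A_j\}$ is literally the paper's choice of the lexicographically minimal staircase containing a given point. The one genuine repackaging is the treatment of uniqueness: the paper verifies by induction on $j$ that the decompositions relative to two staircases give the same barycentric coordinates, whereas you deduce uniqueness once and for all from the acyclicity of the incidence graph of a staircase, which then delivers the intersection formula and the unique tile assignment in one stroke — arguably the cleaner organization. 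Two small corrections that do not break anything: the incidence graph $\{(j,i)\,\vert\, i\in I_j\}$ is a tree, not a path (a bucket with $\# I_j\ge 3$ has degree $\ge 3$), and a vertex interval $[P_{i-1},P_i]$ can meet more than two of the weight intervals; but acyclicity is all that the transportation-uniqueness argument requires. Finally, your facet-adjacency argument for the filtration should be completed by the observation (made explicitly in the paper) that for an arbitrary $I<I^N$, not merely an adjacent one, the first index $j$ at which $e_I(j)<e_{I^N}(j)$ automatically satisfies $\# I^N_j>1$, so that the whole intersection $\Delta_{I,\alpha}\cap\Delta_{I^N,\alpha}$ is contained in the corresponding removed facet.
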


\begin{proof}
Let us identify each point of the simplex $\Delta_{[m]}$ with its barycentric coordinates in the basis given by its vertices, so that $\Delta_{[m]} = \{ (\lambda_0 , \dots , \lambda_m) \in \R_+^{m+1} \, \vert \, \lambda_0 + \dots +  \lambda_m = 1 \}$. Then, for every $I = (I_j)_{j \in \{0, \dots , n\}} \in I(n,m)$,
\begin{eqnarray}
\label{eqndeltaIalpha}
\Delta_{I, \alpha} = \left\{ \lambda = (\lambda_i) \in \Delta_{[m]} \, \left| \, \forall j \in \{0, \dots , n\}, \sum_{i=0}^{e_I (j) - 1} \lambda_i \leq \sum_{l=0}^j \alpha_l \leq  \sum_{i=0}^{e_I (j)} \lambda_i \right. \right\}
\end{eqnarray}
Indeed, let $x \in \Delta_{I, \alpha}$, so that $x = \alpha_0 x_0 + \dots + \alpha_n x_n$ with $x_j \in \Delta_{I_j}$, $j \in \{0, \dots , n\}$, and let us denote the barycentric coordinates of $x_j$ by $(\lambda_i^j)_{i \in I_j}$. The barycentric coordinates $(\lambda_i)_{i \in \{ 0, \dots , m \}}$ of $x$ then satisfy, for every $i \in \{ 0, \dots , m \}$, $\lambda_i = \sum_{l=0}^n \alpha_l \lambda_i^l$. Let $j \in \{ 0, \dots , n \}$, we deduce that
\begin{eqnarray*}
\sum_{i=0}^{e_I (j)} \lambda_i  &=& \sum_{i=0}^{e_I (j)} \sum_{l=0}^{n} \alpha_l \lambda_i^l \geq  \sum_{l=0}^{j} \alpha_l \sum_{i=0}^{e_I (j)} \lambda_i^l =  \sum_{l=0}^{j} \alpha_l ,
\end{eqnarray*}
since $\{ 0 , \dots , e_I (j) \}$ contains $I_l$ if $l \leq j$. Likewise, 
$$\sum_{l=0}^{j} \alpha_l = \sum_{l=0}^{j} \alpha_l \big( \sum_{i=0}^{e_I (j)} \lambda_i^l \big) =  \sum_{i=0}^{e_I (j)} \big( \sum_{l=0}^{j} \alpha_l \lambda_i^l \big) \geq \sum_{i=0}^{e_I (j)-1} \big( \sum_{l=0}^{j} \alpha_l \lambda_i^l \big) = \sum_{i=0}^{e_I (j)-1} \lambda_i,$$
since $I_l \cap \{ 0 , \dots , e_I (j)-1 \} = \emptyset$ if $l > j$.

Conversely, if $\lambda = (\lambda_i)_{i \in \{ 0, \dots , m \}}$ satisfies, for every $j \in \{0, \dots , n\}$, $\sum_{i=0}^{e_I (j) - 1} \lambda_i \leq \sum_{l=0}^j \alpha_l \leq  \sum_{i=0}^{e_I (j)} \lambda_i $, we set
$\lambda_i^0 = \frac{\lambda_i}{\alpha_0}$ if $i < e_I (0)$, $\lambda_i^0 =0$ if $i > e_I (0)$ and $\lambda_{e_I (0)}^0 = 1 - \sum_{i=0}^{e_I (0) - 1} \lambda^0_i$. Then, by induction on $j$, we set
\begin{eqnarray}
\label{eqnlamndaik}
\lambda_i^j =  \frac{\lambda_i - \sum_{l=0}^{j-1} \alpha_l \lambda_i^l}{\alpha_j} \text{ if } i < e_I (j), \, \lambda_i^j =0 \text{ if } i > e_I (j) \text{ and } \lambda_{e_I (j)}^j = 1 - \sum_{i=0}^{e_I (j) - 1} \lambda^j_i.
\end{eqnarray}
These coefficients are all non-negative and if we denote by $x_j$ the point with barycentric coordinates $(\lambda_i^j)_{i \in I_j}$, we get $x = \alpha_0 x_0 + \dots + \alpha_n x_n$ by construction. The equality (\ref{eqndeltaIalpha}) is proved. 

Now, for every $N \in \{ 1, \dots , N(n,m) \}$, let us denote by $L_N$ the union $\cup_{k=1}^N \Delta_{I^k, \alpha}$. Let $\lambda = (\lambda_i)_{i \in \{ 0, \dots , m \}} \in \Delta_{[m]}$ and for every $j \in \{ 0, \dots , n-1 \}$, let $f(j)$ be the least integer $e$ such that $\sum_{i=0}^e \lambda_i \geq \sum_{l=0}^j \alpha_l$, so that $\sum_{l=0}^j \alpha_l > \sum_{i=0}^{f(j)-1} \lambda_i$. We set $f(n)=m$. Then, $\lambda \in \Delta_{I^f, \alpha}$ by (\ref{eqndeltaIalpha}) while if $I < I^f$, $\lambda \notin \Delta_{I, \alpha}$ by definition of the lexicographic order.  In particular, the cells $(\Delta_{I, \alpha})_{I \in I(n,m)}$ cover $\Delta_{[m]}$. Let $N \in \{ 1, \dots , N(n,m) \}$ be such that $I^f = I^N$, we deduce that $\Delta_{I^N, \alpha} \cap L_{N-1}= \{ \lambda = (\lambda_i)_{i \in \{ 0, \dots , m \}} \in \Delta_{I^N, \alpha} \, \vert \, \exists j \in \{ 0 , \dots , n-1 \}, \, \sum_{l=0}^j \alpha_l = \sum_{i=0}^{e_{I^N} (j)-1} \lambda_i  \}$. Let then $\lambda = (\lambda_i)_{i \in \{ 0, \dots , m \}} \in \Delta_{I^N, \alpha} \cap L_{N-1}$ and $ j \in \{ 0 , \dots , n-1 \}$ be such that $ \sum_{l=0}^j \alpha_l = \sum_{i=0}^{e_{I^N} (j)-1} \lambda_i $. Since $\alpha_j \neq 0$, this forces $e_{I^N} (j-1) < e_{I^N} (j)$.  Then, denoting by $x$ the point of $\Delta_{[m]}$ with barycentric coordinates $\lambda$ and writing it as before $x = \alpha_0 x_0 + \dots + \alpha_n x_n$, where $x_l$ has barycentric coordinates $(\lambda^l_i)_{i \in I^N_l}$, $l \in \{ 0, \dots , n\}$, this forces $\lambda^j_{e_{I^N} (j)} = 0$. Indeed, by (\ref{eqnlamndaik}), $\sum_{i=0}^{e_{I^N} (j)-1} \lambda_i^j = \frac{1}{\alpha_j} \big( \sum_{i=0}^{e_{I^N} (j)-1} \lambda_i - \sum_{l=0}^{j-1} \alpha_l (\sum_{i=0}^{e_{I^N} (j)-1} \lambda^l_i ) \big) = \frac{1}{\alpha_j} \big( \sum_{i=0}^{e_{I^N} (j)-1} \lambda_i - \sum_{l=0}^{j-1} \alpha_l \big) = \frac{\alpha_j}{\alpha_j} =1$. We conclude that $x_j$ belongs to the facet $\Delta_{I^N_j \setminus \{ e_{I^N} (j) \}}$ of $\Delta_{I^N_j}$. Conversely, if $x_j$ belongs to the facet $\Delta_{I^N_j \setminus \{ e_{I^N} (j) \}}$ of $\Delta_{I^N_j}$, then $e_{I^N} (j-1) < e_{I^N} (j)$ and the preceding computation shows that
$\sum_{i=0}^{e_{I^N} (j)-1} \lambda_i  = \sum_{l=0}^{j} \alpha_l $ since $\sum_{i=0}^{e_{I^N} (j)-1} \lambda_i^j  =1$. We deduce that for every $N \in \{ 2, \dots , N(n,m) \}$, $L_N \setminus L_{N-1}= T_{I^N, \alpha}$.

Let us finally prove that the intersection of two cells is a common face of them. Let then $I = (I_j)_{j \in \{ 0, \dots , n \}}$ and $I' = (I'_j)_{j \in \{ 0, \dots , n \}}$ be two staircases of $I(n,m)$. For every $j \in \{ 0, \dots , n \}$, let $x_j \in \Delta_{I_j} \cap \Delta_{I'_j} $. Then, $\sum_{j=0}^n \alpha_j x_j \in \Delta_{I, \alpha} \cap \Delta_{I', \alpha}$ and we have to prove that the intersection is reduced to this face. Let $x$ be a point in this intersection and let $\lambda = (\lambda_i)_{i \in \{ 0, \dots , m \}} $ be its barycentric coordinates. For every $j \in \{ 0, \dots , n \}$,
$\sum_{i=0}^{e_I (j) - 1} \lambda_i \leq \sum_{l=0}^j \alpha_l \leq  \sum_{i=0}^{e_I (j)} \lambda_i$ and $\sum_{i=0}^{e_{I'} (j) - 1} \lambda_i \leq \sum_{l=0}^j \alpha_l \leq  \sum_{i=0}^{e_{I'} (j)} \lambda_i $ by (\ref{eqndeltaIalpha}). We may write $x = \alpha_0 x_0 + \dots + \alpha_n x_n$ and $x = \alpha_0 x'_0 + \dots + \alpha_n x'_n$, where for every $j \in \{ 0, \dots , n \}$, $x_j \in \Delta_{I_j} $ and $x'_j \in \Delta_{I'_j} $ have barycentric coordinates given by (\ref{eqnlamndaik}). Let $ j \in \{ 0, \dots , n \}$. If $e_I (j) < e_{I'} (j)$ and vice versa if $e_{I'} (j) < e_I (j)$, we deduce that $\sum_{l=0}^j \alpha_l = \sum_{i=0}^{e_I (j)} \lambda_i  = \sum_{i=0}^{e_{I'} (j) - 1} \lambda_i $, so that $\lambda_i =0$ if  $e_I (j) <  i <e_{I'} (j)$ and $x_j'$ belongs to the facet
$\Delta_{I'_j \setminus \{ e_{I'} (j)  \}} $ of $\Delta_{I'_j} $ by what we proved above. By induction on $j$, the formula (\ref{eqnlamndaik}) defining the barycentric coordinates of $x_j$ and $x'_j$ give then the same numbers, so that $x_j=x'_j \in \Delta_{I_j} \cap \Delta_{I'_j} $ for every $j \in \{ 0, \dots , n \}$. Hence the result. 
\end{proof}

\subsection{Staircases triangulations}
\label{subsectriangulations}

The mixed decompositions given by Theorem \ref{theoMinkowskisum}  provide in fact primitive triangulations of the product of two simplices via the Cayley trick, see \cite{GKZ, HRS, San}, inducing the cartesian product structure of \cite{EilSteen}. Indeed, the Cayley trick makes it possible to switch from triangulations to mixed decompositions by associating to every maximal simplex of a primitively triangulated product of simplices $\Delta \times \Delta'$ its intersection with the fiber $\{ b \} \times \Delta'$, where
$b$ denotes the barycenter of $\Delta$. We are going to use this correspondence. 

For every staircase $I \in I(n,m)$, let $\Delta_I$ be the convex hull in $\Delta_{[n]} \times \Delta_{[m]}$ of the faces $(\{ j \} \times \Delta_{I_j})_{j \in \{ 0, \dots , n \}}$ and let $T_I$ be the convex hull of the tiles $(\{ j \} \times T_{I_j})_{j \in \{ 0, \dots , n-1 \}}$ and $\{ n \} \times \Delta_{I_n}$.

\begin{cor}
\label{corprimtriangulations}
For every non-negative integers $m,n$, the ordered collection of $(m+n)$-simplices $(\Delta_I)_{I \in I(n,m)}$ provides a shelled primitive triangulation of the product $\Delta_{[n]} \times \Delta_{[m]}$. The ordered collection of tiles $(T_I)_{I \in I(n,m)}$ provides the associated $h$-tiling.
\end{cor}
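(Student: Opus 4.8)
The plan is to regard $(\Delta_I)_{I\in I(n,m)}$ as the Cayley lift of the mixed decomposition $(\Delta_{I,\alpha})_{I\in I(n,m)}$ of $\Delta_{[m]}$ produced by Theorem \ref{theoMinkowskisum}, taking the uniform weights $\alpha_j=\frac{1}{n+1}$, and to transfer each clause of that theorem through the fiber correspondence of the Cayley trick. Concretely, a point of $\Delta_I$ has the form $\sum_{j=0}^n \mu_j (j,x_j)$ with $x_j\in\Delta_{I_j}$, $\mu_j\geq 0$ and $\sum_j\mu_j=1$; its first coordinate is the point of $\Delta_{[n]}$ with barycentric coordinates $(\mu_0,\dots,\mu_n)$, which equals the barycenter $b$ of $\Delta_{[n]}$ exactly when all $\mu_j=\frac{1}{n+1}$. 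In that case its second coordinate is $\sum_j\frac{1}{n+1}x_j$, so that $\Delta_I\cap(\{b\}\times\Delta_{[m]})$ is canonically identified with $\Delta_{I,\alpha}$, and likewise $T_I\cap(\{b\}\times\Delta_{[m]})$ with $T_{I,\alpha}$. This fiber dictionary is the device that converts the mixed decomposition into the primitive triangulation.

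First I would check that each $\Delta_I$ is an honest $(m+n)$-simplex. Its vertex set is $\{(j,i)\ :\ 0\leq j\leq n,\ i\in I_j\}$, and since consecutive intervals of a staircase share exactly one endpoint, $e_I(j)=b_I(j+1)$, a telescoping count gives $\sum_j|I_j|=m+(n+1)$, i.e. exactly $m+n+1$ vertices. Moreover $\Delta_I$ surjects onto $\Delta_{[n]}$ under the first projection, with fibers identified above with the Minkowski cells $\Delta_{I,\alpha}$, whose dimension is $\sum_j(|I_j|-1)=m$. Hence $\dim\Delta_I=m+n$, and a set of $m+n+1$ points spanning a space of dimension $m+n$ is affinely independent, so $\Delta_I$ is a top-dimensional simplex of $\Delta_{[n]}\times\Delta_{[m]}$.

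Next come the triangulation and primitivity. Theorem \ref{theoMinkowskisum} states that the cells $\Delta_{I,\alpha}$ cover $\Delta_{[m]}$ and that $\Delta_{I,\alpha}\cap\Delta_{I',\alpha}=\sum_j\alpha_j\Delta_{I_j\cap I'_j}$ is a common face. Through the fiber dictionary these translate into the facts that the simplices $\Delta_I$ cover $\Delta_{[n]}\times\Delta_{[m]}$ and that $\Delta_I\cap\Delta_{I'}$ is the common face spanned by $\{(j,i)\ :\ i\in I_j\cap I'_j\}$; hence $(\Delta_I)$ is a simplicial complex triangulating the product. Primitivity (Definition \ref{defprimitive}) is then immediate: every vertex $(j,i)$ of the polyhedral complex lies in some $\Delta_I$ and no $\Delta_I$ introduces new vertices, so the two vertex sets coincide.

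Finally I would read off the shelling from the filtration in Theorem \ref{theoMinkowskisum}. Labelling $I(n,m)$ in increasing lexicographic order as $(I^N)$, that theorem gives $\big(\cup_{k\leq N}\Delta_{I^k,\alpha}\big)\setminus\big(\cup_{k<N}\Delta_{I^k,\alpha}\big)=T_{I^N,\alpha}$, and applying the fiber dictionary yields $\big(\cup_{k\leq N}\Delta_{I^k}\big)\setminus\big(\cup_{k<N}\Delta_{I^k}\big)=T_{I^N}$. Thus the filtration $S_N=\cup_{k\leq N}\Delta_{I^k}$ exhibits $(\Delta_I)$ as a shelling whose associated tiles are exactly the $T_I$, an $h$-tiling since each $T_I$ is basic. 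Here each $T_{I_j}$ drops the facet of $\Delta_{I_j}$ opposite its top vertex $e_I(j)$, so $T_I$ is obtained from $\Delta_I$ by deleting, for each $j<n$ with $|I_j|\geq 2$, the facet opposite $(j,e_I(j))$, and these are precisely the facets of $\Delta_I$ already contained in $\cup_{k<N}\Delta_{I^k}$, which is exactly the shelling condition of \S\ref{subsecsimplcx}. The main obstacle is making the fiber dictionary rigorous, and in particular matching the facets of $\Delta_I$ with the Minkowski walls $\{x_j\in\Delta_{I_j\setminus\{e_I(j)\}}\}$; once this is pinned down, every assertion of the corollary follows mechanically from Theorem \ref{theoMinkowskisum}.
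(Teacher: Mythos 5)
There is a genuine gap, and you put your finger on it yourself in your last sentence: the ``fiber dictionary'' over the single barycentric fiber $\{b\}\times\Delta_{[m]}$ cannot deliver the covering, intersection and shelling statements for the whole product. Knowing that the cells $\Delta_{I,\alpha}$ (for the one choice $\alpha_j=\tfrac{1}{n+1}$) cover $\Delta_{[m]}$ and meet along common faces only constrains the slice of $\Delta_{[n]}\times\Delta_{[m]}$ lying over $b$; a point $(x,y)$ with $x\neq b$ --- in particular any point over the boundary $\partial\Delta_{[n]}$ --- is simply invisible to that fiber, so the assertion ``through the fiber dictionary these translate into the facts that the simplices $\Delta_I$ cover $\Delta_{[n]}\times\Delta_{[m]}$ and intersect in common faces'' is exactly the statement that needs proving, not a translation. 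The passage from a mixed subdivision of one fiber to a triangulation of the whole Cayley polytope is the content of the Cayley trick theorem of \cite{HRS, San}; you neither invoke it as a citation nor reprove it, and without it the argument does not close. The same objection applies to the shelling step: the identity $\bigl(\cup_{k\leq N}\Delta_{I^k}\bigr)\setminus\bigl(\cup_{k<N}\Delta_{I^k}\bigr)=T_{I^N}$ is only obtained on the barycentric slice.

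The fix is close to what you wrote, and is what the paper does. Your own parametrization of $\Delta_I$ as $\bigl\{\sum_j\mu_j(j,x_j)\bigr\}$ shows that $\Delta_I\cap(\{\alpha\}\times\Delta_{[m]})=\{\alpha\}\times\Delta_{I,\alpha}$ for \emph{every} interior point $\alpha\in(\R_+^*)^{n+1}$, and Theorem \ref{theoMinkowskisum} is stated for arbitrary such $\alpha$; using the whole family of fibers at once settles the covering, the face-to-face intersections and the filtration over the open part $\stackrel{\circ}{\Delta}_{[n]}\times\Delta_{[m]}$. The boundary, where some $\alpha_j$ vanish and Theorem \ref{theoMinkowskisum} no longer applies, is then handled by induction on $n$: each $\Delta_{[n]\setminus\{j\}}\times\Delta_{[m]}$ carries the staircase triangulation of one dimension lower, one checks that the traces of the $\Delta_I$ on these facets are faces of that triangulation and that the triangulations of two facets agree on their common face. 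Your dimension count, the identification of the removed facets of $T_I$, and the final verification that for $I<I^N$ the intersection $\Delta_I\cap\Delta_{I^N}$ lies in a facet of $\Delta_{I^N}$ missing a vertex $(j,e_{I^N}(j))$ with $j<n$ and $\#I_j^N>1$ are all correct and match the paper; only the central reduction to a single fiber must be replaced by the all-fibers-plus-induction argument (or by an explicit appeal to the Cayley trick theorem).
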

Corollary \ref{corprimtriangulations} corresponds to the case in Theorem \ref{theoproducttiles} where both tiles are critical of vanishing index. 

\begin{proof}
We prove the result by induction on $n$. If $n=0$, there is nothing to prove, the set $I(0,m)$ consists of a single staircase $I_0$ and $\Delta_{I_0}= T_{I_0}$ coincides with the simplex $\{ 0 \} \times \Delta_{[m]}$. Let us assume the result proven up to the rank $n-1$ and let us prove it for $n$. For every $j \in \{0, \dots , n\}$, the vertices of the facet $\Delta_{[n] \setminus \{ j \}}$ inherit a total order and by the induction hypothesis, the product $\Delta_{[n] \setminus \{ j \}}  \times \Delta_{[m]}$ inherits a triangulation with maximal simplices $(\Delta_{\tilde{I}})_{\tilde{I} \in I(n-1,m)}$.
If $I \in I(n,m)$, the intersection of $\Delta_I$ with $ \Delta_{[n] \setminus \{ j \}}  \times \Delta_{[m]}$ is a face of codimension $\# I_j - 1$ in this triangulation by definition, since it is included in a simplex $\Delta_{\tilde{I}}$ for some $\tilde{I} \in I(n-1,m)$ and even coincides with this simplex if $\# I_j = 1$. Likewise, for every $j \neq k \in \{0, \dots , n\}$, the vertices of the face $\Delta_{[n] \setminus \{ j , k \}}$ inherit a total order and the product $ \Delta_{[n] \setminus \{ j, k \}}  \times \Delta_{[m]}$ inherits by the induction hypothesis a triangulation which coincides from what we just saw with the trace of the triangulations of $\Delta_{[n] \setminus \{ j \}} \times \Delta_{[m]}$  and $\Delta_{[n] \setminus \{ k \}}  \times \Delta_{[m]}$. Hence, we get from the induction hypothesis a triangulation on $\partial \Delta_{[n]} \times \Delta_{[m]}$. Now, the interior points to $\Delta_{[n]}$ are determined by their barycentric coordinates $\alpha = (\alpha_0 , \dots , \alpha_n) \in (\R_+^*)^{n+1}$, $\alpha_0 + \dots + \alpha_n = 1$, in the affine basis given by its vertices. For every staircase $I \in I(n,m)$, the intersection $\Delta_I \cap (\{ \alpha \} \times \Delta_{[m]})$ coincides with $\Delta_{I, \alpha}$ by definition. From Theorem \ref{theoproducttiles} follows thus by induction that the collection of simplices $(\Delta_I)_{I \in I(n,m)}$ defines a primitive triangulation of the product $\Delta_{[n]} \times \Delta_{[m]}$ and that the intersection of two such simplices $\Delta_I \cap \Delta_J$ is the convex hull of the vertices of $(\{j\}\times \Delta_{I_j \cap J_j})_{j \in \{ 0, \dots , n \}}$ they have in common. 

Let us now label the straicases in the increasing order by $(I^N)_{N \in \{1 \dots , N(n,m)\}}$ so that the unions $\cup_{k=1}^N \Delta_{I^k}$, $N \in \{1 \dots , N(n,m)\}$, filtrate $\Delta_{[n]} \times \Delta_{[m]}$. Let $N \in \{1 \dots , N(n,m)\}$, we are going to prove that $\cup_{k=1}^N \Delta_{I^k} \setminus \cup_{k=1}^{N-1} \Delta_{I^k} $ is the tile $T_{I^N}$.
Let $I \in I(n,m)$ be such that $I < I^N$. By definition of the lexicographic order, there exists  $j \in \{0, \dots , n-1\}$ such that $\# I_j^N > 1$ and $e_I (j) < e_{I^N} (j)$, since $e_I$ is increasing.
If $e_I (j) = e_{I^N} (j) - 1$ and $e_I (l) = e_{I^N} (l)$ for $l \neq j$, then $\Delta_I$ contains all vertices of $\Delta_{I^N}$ but $(j, e_{I^N} (j))$ so that $\Delta_I \cap \Delta_{I^N}$ is the facet of $\Delta_{I^N}$ not containing $(j, e_{I^N} (j))$. Otherwise, $\Delta_I$ contains a subset of these vertices of $\Delta_{I^N}$. Since $T_{I^N}$ is the tile $\Delta_{I^N}$ deprived precisely of all those facets not containing $(j, e_{I^N} (j))$ for all $j \in \{0, \dots , n-1\}$ such that $\# I_j >1$, we deduce the result.
\end{proof}

Let us finally observe that the lexicographic order on the pairs $(j,i) \in \{ 0, \dots , n \} \times \{ 0, \dots , m\}$ induces a total order on the vertices of $\Delta_I$ for all $I \in I(n,m)$.
If $(j_1 , i_1)$ and $(j_2 , i_2)$ are two vertices of $\Delta_I$, then, by definition of staircases, $(j_1 , i_1) \leq (j_2 , i_2)$ with respect to this order iff $j_1 \leq j_2$ and $i_1 \leq i_2$. The product $\Delta_{[n]} \times \Delta_{[m]}$ equipped with the triangulation $(\Delta_I)_{I \in I(n,m)}$  given by Corollary \ref{corprimtriangulations} is thus the cartesian product of $\Delta_{[n]}$ and $ \Delta_{[m]}$ in the sense of Definition $II. 8.8$ of \cite{EilSteen}.

\subsection{The palindromic automorphism}
\label{subsecpalindautom}

Let ${\cal P}$ be the automorphism of $\Delta_{[n]} \times \Delta_{[m]}$ induced by reversing the total orders of the vertices of both simplices, so that for every $(j,i) \in \{0, \dots , n\} \times \{0, \dots , m\}$, ${\cal P} (j,i) = (n-j, m-i)$.

\begin{lemma}
\label{lemmapalindautom}
For every non-negative integers $m,n$, the automorphism ${\cal P}$ preserves the triangulation $(\Delta_I)_{I \in I(n,m)}$ of $\Delta_{[n]} \times \Delta_{[m]}$ and satisfies, for every $I \in I(n,m)$,
${\cal P} (\Delta_I) = \Delta_{\check{I}}$.
\end{lemma}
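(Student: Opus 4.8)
The plan is to exploit that ${\cal P}$ is the affine automorphism of $\Delta_{[n]} \times \Delta_{[m]}$ induced by the relabeling $(j,i) \mapsto (n-j, m-i)$ of the vertices, together with the fact that each $\Delta_I$ is by definition a simplex, namely the convex hull of the faces $(\{ j \} \times \Delta_{I_j})_{j \in \{0,\dots,n\}}$. Since an affine map is determined by its effect on vertices and carries the convex hull of a finite set onto the convex hull of the images, it suffices to track where ${\cal P}$ sends the vertices of $\Delta_I$ and to recognise the image as the vertex set of $\Delta_{\check{I}}$. Once the equality ${\cal P}(\Delta_I) = \Delta_{\check{I}}$ is established, the invariance of the whole triangulation follows at once, because $I \mapsto \check{I}$ is an involution of $I(n,m)$, hence a permutation of the indexing set of the top simplices.

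First I would record the vertex set of $\Delta_I$. From its definition as the convex hull of the faces $(\{ j \} \times \Delta_{I_j})_j$, the vertices of $\Delta_I$ are exactly the pairs $(j,i)$ with $j \in \{0, \dots , n\}$ and $i \in I_j$, that is $b_I(j) \leq i \leq e_I(j)$; there are $m+n+1$ of them, in agreement with Corollary \ref{corprimtriangulations}. Next I would apply ${\cal P}$ vertexwise. Writing $j' = n-j$ and $i' = m-i$, as $(j,i)$ runs over the vertices of $\Delta_I$ the pair $(j',i')$ runs over those $(j',i')$ with $j' \in \{0, \dots , n\}$ and $m - e_I(n-j') \leq i' \leq m - b_I(n-j')$.

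Comparing with the definition of the involution on staircases, where $b_{\check{I}}(j') = m - e_I(n-j')$ and $e_{\check{I}}(j') = m - b_I(n-j')$, these are precisely the pairs $(j',i')$ with $i' \in \check{I}_{j'}$, that is the vertices of $\Delta_{\check{I}}$. Thus ${\cal P}$ maps the vertex set of $\Delta_I$ bijectively onto that of $\Delta_{\check{I}}$, and since both $\Delta_I$ and $\Delta_{\check{I}}$ are simplices, ${\cal P}(\Delta_I) = \Delta_{\check{I}}$.

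I do not expect a serious obstacle, as the argument is essentially bookkeeping with the interval bounds; the only point requiring care is to check that the index reversal $j \mapsto n-j$ on the first factor and the value reversal $i \mapsto m-i$ on the second factor combine to implement exactly the staircase involution $I \mapsto \check{I}$, which is what the reindexing $j' = n-j$ inside the bounds $m - e_I(n-j') \leq i' \leq m - b_I(n-j')$ accomplishes. With ${\cal P}(\Delta_I) = \Delta_{\check{I}}$ in hand and $\check{\cdot}$ an involution of $I(n,m)$, the family $(\Delta_I)_{I \in I(n,m)}$ is globally invariant under ${\cal P}$, which yields the preservation of the triangulation and completes the proof.
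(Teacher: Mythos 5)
Your proof is correct and follows essentially the same route as the paper: both reduce the claim to checking that ${\cal P}$ sends the vertex set $\{(j,i) : i \in I_j\}$ of $\Delta_I$ onto the vertex set of $\Delta_{\check{I}}$ via the identity $\check{I}_{n-j} = \{m - e_I(j), \dots, m - b_I(j)\}$. Your write-up merely makes explicit a couple of points the paper leaves implicit (the affine map carrying convex hulls to convex hulls, and the involution property giving global invariance of the triangulation).
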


\begin{proof}
Let $I \in I(n,m)$, it is enough to prove that ${\cal P}$ maps the vertices of $\Delta_I$ on those of $\Delta_{\check{I}}$. Now, if $j \in \{0, \dots , n\} $ and if $i \in I_j$, ${\cal P} (j,i) = (n-j, m-i)$ by definition while
$\check{I}_{n-j} = \{ m - e_I (j) , \dots , m- b_I (j) \}$ by definition, so that $m-i \in \check{I}_{n-j}$ if $i \in I_j$.
\end{proof}
However, the automorphism ${\cal P}$ does not preserve the $h$-tiling $(T_I)_{I \in I(n,m)}$ of $\Delta_{[n]} \times \Delta_{[m]}$ in general, so that $({\cal P}(T_I))_{I \in I(n,m)}$ provides another shelling of the triangulation $(\Delta_I)_{I \in I(n,m)}$. We may also exchange the factors $ \Delta_{[n]}$ and $ \Delta_{[m]}$ to get from Corollary \ref{corprimtriangulations} a triangulation $(\Delta_J)_{J \in I(m,n)}$ on the product $\Delta_{[m]} \times \Delta_{[n]}$ together with a pair of shellings $(T_J)_{J \in I(m,n)}$ and $({\cal P}(T_J))_{J \in I(m,n)}$.

\begin{lemma}
\label{lemmaexchange}
The involution $E : (x,y) \in \Delta_{[n]} \times \Delta_{[m]} \mapsto (y, x) \in \Delta_{[m]} \times \Delta_{[n]} $ commutes with the action of ${\cal P}$ and defines an isomorphism between the simplicial complexes $(\Delta_I)_{I \in I(n,m)}$ and $(\Delta_J)_{J \in I(m,n)}$ which maps the shelling $(T_I)_{I \in I(n,m)}$ onto the shelling $({\cal P}(T_J))_{J \in I(m,n)}$. Moreover, it preserves the order on the vertices of all simplices $\Delta_I$, $I \in I(n,m)$.
\end{lemma}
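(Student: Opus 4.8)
The plan is to work throughout at the level of vertex sets, writing $E(j,i) = (i,j)$, ${\cal P}(j,i) = (n-j, m-i)$ on $\Delta_{[n]} \times \Delta_{[m]}$ and ${\cal P}(i,j) = (m-i, n-j)$ on $\Delta_{[m]} \times \Delta_{[n]}$. The commutation $E \circ {\cal P} = {\cal P} \circ E$ is then the one-line check $E{\cal P}(j,i) = E(n-j,m-i) = (m-i,n-j) = {\cal P}(i,j) = {\cal P}E(j,i)$. For the simplicial isomorphism I would observe that $E$ is affine and bijective on vertices, so it suffices to see that it carries each maximal simplex $\Delta_I$ onto a maximal simplex of the target triangulation. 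The vertices of $\Delta_I$ are the pairs $(j,i)$ with $i \in I_j$, and $E$ sends them to the pairs $(i,j)$ with $j \in J_i$, where $J = (J_i)_i$ is the staircase associated to $I$ by Lemma \ref{lemmaJi}; these are exactly the vertices of $\Delta_J$, so $E(\Delta_I) = \Delta_J$ and, the correspondence $I \mapsto J$ being bijective, $E$ is a simplicial isomorphism onto $(\Delta_J)_{J \in I(m,n)}$. The order claim is then immediate from the remark closing \S\ref{subsectriangulations}: on the vertices of any $\Delta_I$ the induced order reads $(j_1,i_1) \leq (j_2,i_2)$ iff $j_1 \leq j_2$ and $i_1 \leq i_2$, a componentwise condition symmetric in the two coordinates, hence preserved by the swap $E$.

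The substance is the statement about shellings, for which I would first record, from the proof of Corollary \ref{corprimtriangulations}, that $T_I$ is the basic tile on $\Delta_I$ whose missing facets are exactly those opposite the vertices $(j, e_I(j))$ with $j \leq n-1$ and $\# I_j > 1$. Reading $I$ as a monotone lattice path from $(0,0)$ to $(n,m)$ -- going up within a column and right between consecutive columns -- these distinguished vertices are precisely its up-then-right corners, that is the lattice points reached by an up-step and left by a right-step. The goal is to prove $E(T_I) = {\cal P}(T_{\check{J}})$, where $J$ is again the staircase of Lemma \ref{lemmaJi}; since $E(\Delta_I) = \Delta_J = {\cal P}(\Delta_{\check{J}})$ by the previous paragraph and Lemma \ref{lemmapalindautom}, and a basic tile is determined by its underlying simplex together with its set of missing facets, it is enough to match these two sets of facets.

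Here the two maps act on corners in complementary ways. The swap $E$ is the transposition of the grid exchanging up-steps and right-steps, so it carries the up-right corners of $I$ bijectively to the right-up corners of $J$, located at the points $(e_I(j), j)$; one checks indeed that $\# I_j > 1$ is exactly the condition for an arriving right-step to exist at $(e_I(j),j)$ in $J$. The reflection ${\cal P}$ on the $\Delta_{[m]} \times \Delta_{[n]}$ side is the central symmetry $(i,j) \mapsto (m-i,n-j)$, which reverses the orientation of a path and thus turns the up-right corners of $\check{J}$ into the right-up corners of $J$. Consequently both $E(T_I)$ and ${\cal P}(T_{\check{J}})$ have missing facets opposite exactly the right-up corners of $J$, which gives the desired equality of tiles and shows that $E$ carries the collection $(T_I)_{I \in I(n,m)}$ onto $({\cal P}(T_J))_{J \in I(m,n)}$; being a simplicial isomorphism, it then carries the shelling to the shelling. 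I expect the main obstacle to be precisely this corner bookkeeping: one must check carefully that the asymmetric convention ``remove the top facet $e_I(j)$'' in the definition of $T_{I_j}$ corresponds to up-right corners, and that composing the transposition $E$ with the central symmetry ${\cal P}$ -- rather than using $E$ alone -- is what is forced in order to realign this convention after exchanging the two factors.
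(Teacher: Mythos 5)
Your proof is correct and follows essentially the same route as the paper's: the commutation and order checks are identical, the simplicial isomorphism goes through Lemma \ref{lemmaJi} exactly as in the text, and your lattice-path corner argument for $E(T_I)={\cal P}(T_{\check{J}})$ is a pictorial rephrasing of the paper's explicit computation that $E$ sends the distinguished vertices $(j,e_I(j))$ with $b_I(j)<e_I(j)$ to the vertices $(i,b_J(i))$ with $b_J(i)<e_J(i)$, which ${\cal P}$ then sends to the distinguished vertices of $T_{\check{J}}$. The corner bookkeeping you flag as the main risk does close correctly, so no gap remains.
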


\begin{proof}
Let us identify $x \in \Delta_{[n]}$ (resp. $y \in \Delta_{[m]}$) with its barycentric coordinates $(\alpha_j)_{j \in \{0, \dots , n\}}$ (resp. $(\lambda_i)_{i \in \{0, \dots , m\}}$) in the affine basis given
by the vertices of $\Delta_{[n]}$ (resp. $ \Delta_{[m]} $), so that $\alpha_j \geq 0$ (resp. $\lambda_i \geq 0$) and $\sum_{j=0}^n \alpha_j = 1$ (resp. $\sum_{i=0}^m \lambda_i = 1$ ). Then,
${\cal P} \circ E \big( (\alpha_j)_{j \in \{0, \dots , n\}} , (\lambda_i)_{i \in \{0, \dots , m\}} \big) = \big( (\lambda_{m-i})_{i \in \{0, \dots , m\}} , (\alpha_{n-j})_{j \in \{0, \dots , n\}} \big) = E \circ {\cal P} \big( (\alpha_j)_{j \in \{0, \dots , n\}} , (\lambda_i)_{i \in \{0, \dots , m\}} \big)$, hence the first part of the lemma. 

Let now $I \in I(n,m)$ and $\Delta_I$ be the associated simplex in the triangulated $\Delta_{[m]} \times \Delta_{[n]} $. Its vertices are the pairs $(j,i) \in \{0, \dots , n\} \times \{0, \dots , m\}$ such that $i \in I_j$. The image $E (\Delta_I)$ is thus the simplex of $\Delta_{[m]} \times \Delta_{[n]} $ with vertices $(i,j) \in \{0, \dots , m\} \times \{0, \dots , n\}$  such that $i \in I_j$. The conditions $i \in I_j$ and $j \in J_i$ are equivalent to each other, where $J$ is the image of $I$ under the involution $I \in I(n,m) \mapsto J \in J(m,n)$ given by Lemma \ref{lemmaJi}. We deduce that $E(\Delta_I) = \Delta_J$, so that $E$ defines a $\Z/2\Z$-equivariant simplicial isomorphism between the complexes $(\Delta_I)_{I \in I(n,m)}$ and $(\Delta_J)_{J \in I(m,n)}$. Moreover, if $(j_1 , i_1) \leq (j_2 , i_2)$ are vertices of $\Delta_I$, then $j_1 \leq j_2$ and $i_1 \leq i_2$ by definition of staircases, so that $(i_1 , j_1) \leq (i_2 , j_2)$ and
$E_{\vert \Delta_I} : \Delta_I \to \Delta_J$ preserves the order on the vertices.

Let us finally prove that ${\cal P} \circ E$ maps the tiling $(T_I)_{I \in I(n,m)}$ onto $(T_J)_{J \in I(m,n)}$. Let $I \in I(n,m)$. By definition, $T_I$ is the simplex $\Delta_I$ with vertices $\{ (j,i) \in \{0, \dots , n\} \times \{0, \dots , m\} \, \vert \, i \in I_j \}$ deprived, for every $j < n$ such that $b_I (j) \neq e_I (j)$, of the facet not containing the vertex $(j, e_I (j))$. From the preceding part, the involution $E$ maps $\Delta_I$ onto the simplex $\Delta_J$, $J \in J(m,n)$, and the vertices $\{ (j,e_I (j)) \in \{0, \dots , n-1\} \times \{1, \dots , m\} \, \vert \,  b_I (j) < e_I (j) \}$ onto the vertices $\{ (i,b_J (i)) \in \{1, \dots , m\} \times \{0, \dots , n-1\} \, \vert \,  b_J (i) < e_J (i) \}$. The involution ${\cal P} \circ E$ thus maps $\Delta_I$ onto $\Delta_{\widecheck{J}}$ and the vertices $\{ (j,e_I (j)) \in \{0, \dots , n-1\} \times \{1, \dots , m\} \, \vert \,  b_I (j) < e_I (j) \}$ onto the vertices $\{ (i,e_J (i)) \in \{0, \dots , m-1\} \times \{1, \dots , n\} \, \vert \,  b_J (i) < e_J (i) \}$, so that ${\cal P} \circ E (T_I) = T_{\widecheck{J}}$. Hence the result. 
\end{proof}

\section{Shellings on products of two tiles}
\label{secshellings}

\subsection{Preliminaries}

The proofs of Theorems \ref{theoproducttiles} and \ref{theoduality} are based on the following Propositions \ref{prop1} and \ref{prop2}.

\begin{prop}
\label{prop1}
Let $m,n \geq 0$, $I \in I(n,m)$ and $J = \{ b (J) , \dots , e (J) \}$ be an interval of $\{0, \dots , m\}$. Then, the intersection of the tile $T_I$ with $\Delta_{[n]} \times \Delta_{[m] \setminus J}$ is:
\begin{enumerate}
\item empty if there exists $j \in \{ 0 , \dots , n-1 \}$ such that $b_I (j) \neq e_I (j)$ and $e_I (j) \in J$.
\item the convex hull of the tiles $(\{l\} \times T_{I_l})_{l \in \{0, \dots , n-1\} \setminus \{j\}}$, $\{n\} \times \Delta_{I_n}$ and $\{j\} \times T_{I_j \setminus  J}$ if there exists
$j \in \{ 0 , \dots , n-1 \}$ such that $J \subset I_j \setminus \{ b_I (j) , e_I (j)\}$ and the convex hull of the tiles $(\{ j \} \times T_{I_j})_{j \in \{0, \dots , n-1\}} $ and $\{n\} \times \Delta_{I_n \setminus J}$ if $J \subset I_n \setminus \{ b_I (n)\}$.
\item the convex hull of the tiles $(\{l\} \times T_{I_l})_{l \in \{j+1, \dots , n-1\}}$, $\{n\} \times \Delta_{I_n}$ and $\{j\} \times T_{I_j \setminus J}$ if  $ b (J) = 0$ and there exists $j \in \{ 0 , \dots , n-1 \}$ such that $J \subset I_j \setminus \{ e_I (j) \}$ and restricts to $\{n\} \times \Delta_{I_n \setminus J}$ if  $ b (J) = 0$ and $J \subset I_n$. 
\end{enumerate}
\end{prop}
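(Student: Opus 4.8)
The plan is to reduce everything to the factorwise product structure of the simplex $\Delta_I$ and the tile $T_I$. First I would pin down the ambient intersection: the face $\Delta_{[n]} \times \Delta_{[m]\setminus J}$ consists exactly of those points whose barycentric coordinate $\lambda_i$ in the $\Delta_{[m]}$-direction vanishes for every $i \in J$, so by the coordinate description in the proof of Theorem \ref{theoMinkowskisum} a point of $\Delta_I$ lies in it iff all of its barycentric coordinates at the vertices $(j,i)$ with $i \in J$ vanish. Hence $\Delta_I \cap (\Delta_{[n]} \times \Delta_{[m]\setminus J})$ is the face $G$ of $\Delta_I$ spanned by the vertices $(j,i)$ with $i \in I_j \setminus J$; grouped by $j$, this face is the convex hull of the factor pieces $\{j\} \times \Delta_{I_j \setminus J}$, $j \in \{0,\dots,n\}$, with the convention that empty factors are omitted.

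Next I would bring in the tile. Recall from the end of the proof of Corollary \ref{corprimtriangulations} that $T_I = \Delta_I \setminus \bigcup_j F_j$, where $j$ runs over the indices of $\{0,\dots,n-1\}$ with $b_I(j) \neq e_I(j)$ and $F_j$ denotes the facet of $\Delta_I$ not containing the vertex $(j,e_I(j))$; equivalently, a point of $\Delta_I$ lies in $T_I$ iff its barycentric coordinate at each such vertex $(j,e_I(j))$ is strictly positive. The desired intersection is therefore $G \setminus \bigcup_j F_j$, and I would analyze each forbidden facet $F_j$ against $G$ separately.

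The trichotomy then falls out factor by factor. If there is a $j \in \{0,\dots,n-1\}$ with $b_I(j) \neq e_I(j)$ and $e_I(j) \in J$, then $(j,e_I(j))$ is not a vertex of $G$, so $G \subseteq F_j$ and the intersection is empty, which is case $1$. Otherwise $e_I(j) \notin J$ for every such $j$, and each corresponding $F_j$ meets $G$ in the facet of $G$ opposite $(j,e_I(j))$, i.e. it removes the top facet of the $j$-th factor, thereby replacing $\Delta_{I_j \setminus J}$ by the tile $T_{I_j \setminus J}$ in the convex hull; a factor with $I_j \setminus J = \emptyset$ simply drops out of the join. Translating the surviving convex hull back into tiles yields cases $2$ and $3$, according to whether $J$ lies strictly interior to a single interval (so exactly one factor $j<n$, or the last factor $n$, shrinks while the rest stay intact) or $b(J)=0$ (so $J$ reaches the bottom, the singleton intervals $I_0 = \dots = I_{j-1} = \{0\}$ are entirely swallowed, and only the factors of index $\geq j$ survive).

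The routine but delicate part, which I expect to be the main obstacle, is the bookkeeping that matches the position of the interval $J$ to exactly one of the three listed configurations: one checks that any interval not triggering the empty case is forced either to stay interior to a single $I_j$, or, once it touches a lower endpoint $b_I(j)=e_I(j-1)$, to descend through singleton intervals all the way down to $0$, since any non-singleton interval met on the way down would put $e_I(j-1) \in J$ and return us to case $1$. What legitimizes the convex-hull answers is precisely that the identity $T_I \cap (\Delta_{[n]} \times \Delta_{[m]\setminus J}) = G \setminus \bigcup_j F_j$ is compatible with the grading by the first factor, so that removing the facets opposite the vertices $(j,e_I(j))$ acts factorwise and reproduces exactly the stated joins of tiles.
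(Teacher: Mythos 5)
Your proof is correct and follows essentially the same route as the paper: both identify $\Delta_I\cap(\Delta_{[n]}\times\Delta_{[m]\setminus J})$ as the face of $\Delta_I$ spanned by the vertices $(j,i)$ with $i\in I_j\setminus J$, and then intersect with $T_I$ by removing the facets opposite the vertices $(j,e_I(j))$, which either swallow the whole face (case 1) or act factorwise to produce the stated joins of tiles (cases 2 and 3). Your explicit description of $T_I$ as $\Delta_I\setminus\bigcup_j F_j$ and the closing discussion of why the case division is exhaustive are slightly more detailed than the paper's argument but change nothing of substance.
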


\begin{remark}
\label{remprop1}
In particular, the face $T_I \cap (\Delta_{[n]} \times \Delta_{[m] \setminus J})$ is empty in the case 1. of Proposition \ref{prop1}, of codimension $\# J$ in the case 2. and of codimension $\# J + j-1$ in the case 3.
\end{remark}

\begin{proof}
In the case 1., the intersection of $\Delta_{[n]} \times \Delta_{[m] \setminus J}$ with the simplex $\Delta_I$ is included in the facet of $\Delta_I$ which does not contain the vertex $(j, e_I (j))$. Since $j<n$, $T_I$ is already deprived of this facet by definition and we get 1. In the case 2., $\Delta_{[n]} \times \Delta_{[m] \setminus J}$  contains all the vertices of $\Delta_I$ except those with coordinates $(j,i)$ with $i \in J$. This product intersects thus $\Delta_I$ along a face of codimension $\# J$, convex hull of $\{j\} \times \Delta_{I_j \setminus J}$ and the simplices $(\{l\} \times \Delta_{I_l})_{l \neq j}$. 
We deduce part 2. after intersecting $T_I$ with this face. Finally, if $J \subset I_j$ and $ b (J) = 0$, then $I_l = \{ 0 \}$ if $l < j$ by definition. It follows that the intersection of $\Delta_{[n]} \times \Delta_{[m] \setminus J}$ with the simplex $\Delta_I$  is the convex hull of $\{j\} \times \Delta_{I_j \setminus J}$ and the faces $(\{l\} \times \Delta_{I_l})_{l > j}$. We deduce 3. after intersecting $T_I$ with this face.
\end{proof}

\begin{prop}
\label{prop2}
Let $m,n \geq 0$, $I \in I(n,m)$ and $J $ be a subset of $\{0, \dots , n\}$. Then, the intersection of the tile $T_I$ with $\Delta_{[n] \setminus J} \times \Delta_{[m]}$ is:
\begin{enumerate}
\item empty if there exists $j \in J \setminus \{ n \}$ such that $b_I (j) \neq e_I (j)$.
\item the convex hull of the tiles $(\{j\} \times T_{I_j})_{j \in \{0, \dots , n-1\} \setminus J}$ together with $\{n\} \times \Delta_{I_n} $ if $n \notin J$ otherwise.
\end{enumerate}
\end{prop}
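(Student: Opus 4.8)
The plan is to reduce everything to a face computation inside the single simplex $\Delta_I$ and then to subtract the missing facets that distinguish the tile $T_I$ from $\Delta_I$. First I would project $\Delta_{[n]} \times \Delta_{[m]}$ onto its first factor $\Delta_{[n]}$: this map sends each vertex $(j,i)$ of $\Delta_I$, $i \in I_j$, to the vertex $j$, so the barycentric coordinate along $j \in \{0, \dots , n\}$ of the image of a point of $\Delta_I$ is the sum of the coordinates of that point along the vertices $(j,i)$, $i \in I_j$. A point of $\Delta_I$ thus lies in $\Delta_{[n] \setminus J} \times \Delta_{[m]}$ iff these sums vanish for every $j \in J$, that is iff the point lies in the face $F$ of $\Delta_I$ spanned by the vertices $(j,i)$ with $j \in \{0, \dots , n\} \setminus J$ and $i \in I_j$. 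So $\Delta_I \cap (\Delta_{[n] \setminus J} \times \Delta_{[m]})$ is the convex hull of the faces $(\{j\} \times \Delta_{I_j})_{j \in \{0,\dots,n\} \setminus J}$.

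Next I would recall, as established in the proof of Corollary \ref{corprimtriangulations}, that $T_I$ is the simplex $\Delta_I$ deprived precisely of the facets not containing $(j, e_I (j))$, for each $j \in \{0, \dots , n-1\}$ with $b_I (j) \neq e_I (j)$. Computing $T_I \cap (\Delta_{[n] \setminus J} \times \Delta_{[m]}) = T_I \cap F$ therefore amounts to deleting from $F$ the traces of these finitely many facets, and I would split according to whether the relevant index $j$ belongs to $J$. If there is some $j \in J \setminus \{n\}$ with $b_I (j) \neq e_I (j)$, then $(j, e_I(j))$ is not a vertex of $F$, so $F$ is entirely contained in the facet of $\Delta_I$ not containing $(j, e_I(j))$; as that facet is absent from $T_I$, the intersection is empty, which is case 1.

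In the complementary case every $j \in J \setminus \{n\}$ has $b_I(j) = e_I(j)$, so those factors are single vertices contributing no deleted facet. For each $j \in \{0, \dots , n-1\} \setminus J$ with $b_I(j) \neq e_I(j)$, the vertex $(j, e_I(j))$ does lie in $F$, and the trace on $F$ of the facet of $\Delta_I$ opposite $(j, e_I(j))$ is exactly the facet of $F$ opposite the same vertex, namely the convex hull of $\{j\} \times \Delta_{I_j \setminus \{e_I(j)\}}$ with the factors indexed by the other surviving indices. Deleting all these traces replaces each factor $\{j\} \times \Delta_{I_j}$ with $j \notin J$, $j < n$, by the tile $\{j\} \times T_{I_j}$, keeps $\{n\} \times \Delta_{I_n}$ when $n \notin J$ and discards it when $n \in J$. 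This produces the convex hull of the tiles $(\{j\} \times T_{I_j})_{j \in \{0,\dots,n-1\} \setminus J}$ together with $\{n\} \times \Delta_{I_n}$ when $n \notin J$, which is case 2.

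The step I expect to require the most care is the commutation in the last paragraph between deleting facets of $\Delta_I$ and passing to the sub-face $F$: one must verify that the trace on $F$ of a facet of $\Delta_I$ opposite a vertex of $F$ is again the facet of $F$ opposite that vertex (which follows because the intersection of two faces of a simplex is the face on their common vertices), and that deleting these traces one index at a time rebuilds the full convex hull of tiles rather than something smaller. This bookkeeping stays clean because the surviving indices $j \in \{0,\dots,n-1\} \setminus J$ with $b_I(j) \neq e_I(j)$ give distinct vertices $(j, e_I(j))$ in distinct factors, so the removed facets are in bijection with these indices and act independently, exactly as in the factorwise definition of $T_I$.
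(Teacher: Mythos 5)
Your proof is correct and follows essentially the same route as the paper's (much terser) argument: identify $\Delta_I \cap (\Delta_{[n]\setminus J}\times\Delta_{[m]})$ with the face of $\Delta_I$ spanned by the vertices $(j,i)$ with $j\notin J$, observe that in case 1 this face lies inside a facet removed from $T_I$, and in case 2 read the answer off from the description of $T_I$ as $\Delta_I$ deprived of the facets opposite the vertices $(j,e_I(j))$ with $j<n$ and $\# I_j>1$. Your write-up simply supplies the bookkeeping the paper leaves implicit, and it is accurate.
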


\begin{remark}
\label{remprop2}
In particular, the face $T_I \cap (\Delta_{[n] \setminus J} \times \Delta_{[m]})$ is of codimension at least $\# J$ when nonempty.
\end{remark}

\begin{proof}
In the case $1.$, the intersection of $\Delta_{[n] \setminus J} \times \Delta_{[m]}$ with the simplex $\Delta_I$ is included in the facet of $\Delta_I$ which does not contain the vertex $(j , e_I (j))$.
Since $j<n$, $T_I$ has been deprived of this facet by definition and we get 1. The case 2. follows from the definition of $T_I$.
\end{proof}

\subsection{Proof of Theorem \ref{theoproducttiles}}
\label{subsecprooftheoproducttiles}

Let us denote by $\sigma_1, \sigma_2$ the underlying simplices of $T_1$ and $T_2$ and by $n$, $m$ their respective dimensions. Let us also choose total orders on the vertices of $\sigma_1$ and $\sigma_2$ in such a  way that if $T_1$ (resp. $T_2$) has been deprived of a Morse face, then the vertices of this face are the greatest of $\sigma_1$ (resp. $\sigma_2$). These orders induce isomorphisms between $\sigma_1$ (resp. $\sigma_2$) and $\Delta_{[n]}$ (resp. $\Delta_{[m]}$), so that $\sigma_1\times \sigma_2$ inherits the triangulation $(\Delta_I)_{I \in I(n,m)}$ given by Corollary \ref{corprimtriangulations} together with the shelling $(T_I)_{I \in I(n,m)}$. The palindromic automorphism ${\cal P}$ given by Lemma \ref{lemmapalindautom} induces then an isomorphism of the simplicial complex $\sigma_1\times \sigma_2$. We are going to prove the following alternative. Either, for every $I \in I(n,m)$, $T_I \cap (T_1 \times T_2)$ is a Morse tile and the collection $\big(T_I \cap (T_1 \times T_2)\big)_{I \in I(n,m)}$ provides a Morse shelling of $T_1 \times T_2$ as claimed by Theorem \ref{theoproducttiles} ; or, for every $I \in I(n,m)$, ${\cal P} (T_I) \cap (T_1 \times T_2)$ is a Morse tile and the collection $\big({\cal P}(T_I )\cap (T_1 \times T_2)\big)_{I \in I(n,m)}$ provides the Morse shelling claimed by Theorem \ref{theoproducttiles}. Hence, the trace with $T_1 \times T_2$ of one of the two $h$-tilings $(T_I)_{I \in I(n,m)}$ and $({\cal P}(T_I))_{I \in I(n,m)}$ of $\sigma_1\times \sigma_2$ provides the Morse shelling we are looking for. However, the total orders chosen on the vertices of $\sigma_1$ and $\sigma_2$ have to satisfy the previous condition for this result to hold true. To get the first part of Theorem \ref{theoproducttiles}, where $T_1$ and $T_2$ are basic, one of them being regular, and where we want a true shelling of $T_1 \times T_2$, that is using only basic tiles, we need an additional condition on these orders to be satisfied, namely Condition $h$ of \S \ref{subseccond}. In what follows, we are going to consider separately the case where $T_1$ and $T_2$ are both basic, the case where they are both Morse non-basic and the mixed case, one being basic, the other one being Morse non-basic. Let us recall that the lexicographic order on the pairs $(j,i) \in \{0, \dots , n\} \times \{0, \dots , m\}$ induces a total order on the vertices of $\Delta_I$ for every $I \in I(n,m)$, so that the total orders on the vertices of $\sigma_1$ and $\sigma_2$ induce total orders on the vertices of all simplices of the complex $\sigma_1\times \sigma_2$.

\subsubsection{The case of two basic tiles}
\label{subseccaseofbasictiles}

If both tiles are basic, the chosen order on the vertices of $\sigma_1$ (resp. $\sigma_2$) induces an isomorphism between $T_1$ (resp. $T_2$) and $\Delta_{[n]} \setminus \cup_{j \in J_1} \Delta_{[n] \setminus \{ j \}}$ (resp. $\Delta_{[m]} \setminus \cup_{i \in J_2} \Delta_{[m] \setminus \{ i \}}$), where $\# J_1$ (resp. $\# J_2$) is the order of $T_1$ (resp. $T_2$). If one of these tiles is in addition regular, in order to get a shelling of $T_1 \times T_2$ using only regular basic tiles, we need to assume that the total orders have been chosen in such a way that if $\{0,n\} \subset J_1$, then 
$\{0,m\} \cap J_2 \neq \emptyset$ and vice versa, that if $\{0,m\} \subset J_2$, then $\{0,n\} \cap J_1 \neq \emptyset$, see Condition $h$ of  \S \ref{subseccond}. In this case, applying the involution $T_1 \times T_2 \to T_2 \times T_1$ which exchanges the roles of $T_1$ and $T_2$, we may assume that if $n \in J_1$, then $m \in J_2$ and that if $0 \in J_2$, then $0 \in J_1$. If we do not assume this additional Condition $h$ on the total orders, then, even with this possibility to apply the involution $T_1 \times T_2 \to T_2 \times T_1$, we can only assume that one of these two properties holds true, either that if $n \in J_1$, then $m \in J_2$, or that if $0 \in J_2$, then $0 \in J_1$, but not both. The following proof then provides a Morse shelling on $T_1 \times T_2$, but not a true shelling in general, so that we need the additional Condition $h$ to get the first part of Theorem \ref{theoproducttiles}.

From Proposition \ref{prop1} we know that for every $I \in I(n,m)$ and every $i \in J_2 \setminus \{ 0 \}$, $T_I \cap \big( \Delta_{[n]}  \times \Delta_{[m] \setminus \{ i \}} \big)$ is either empty, or of codimension one in $T_I$, see Remark \ref{remprop1}. If $0 \in J_2$, then either $\# I_0 > 1$ and $T_I \cap \big( \Delta_{[n]}  \times \Delta_{\{1, \dots , m\}} \big)$ is of codimension one in $T_I$, or $\# I_0 = 1$ and this intersection is included in the facet $T_I \cap \big( \Delta_{\{1, \dots , n\}}  \times \Delta_{[m]} \big)$, so that $T_I \cap \big( \Delta_{[n]}  \times T_2 \big)$ is a basic tile since we assumed that $0 \in J_2$ implies $0 \in J_1$. Likewise, from Proposition \ref{prop2} we know that for every $I \in I(n,m)$ and every $j \in J_1 $, $T_I \cap \big( \Delta_{[n] \setminus \{ j \}}  \times \Delta_{[m]} \big)$ is either empty, or of codimension one in $T_I$, with the exception of $j=n$ if $\# I_n > 1$, but then the intersection is included in the facet $T_I \cap \big( \Delta_{[n]}  \times \Delta_{\{0, \dots , m-1\}} \big)$ since we also assumed that $n \in J_1$ implies $m \in J_2$. We then deduce that for every $I \in I(n,m)$, $T_I \cap ( T_1 \times T_2 )$ is a basic tile. Moreover, by definition, the order of $T_I$ equals $n - \# \{ j \in \{0, \dots , n-1\} \, \vert \, \#I_j = 1 \}$, so that the order of $T_I \cap ( T_1 \times T_2 )$ equals
\begin{eqnarray}
\label{eqnarrayorder}
n - \# \{ j \in \{0, \dots , n-1\} \setminus J_1 \, \vert \, \#I_j = 1 \} +  \# \{ i \in J_2 \, \vert \, i \notin b_I (\{1, \dots , n\}) \}
\end{eqnarray}
plus one in case $n \in J_1$ and $\# I_n = 1$. Since one of the tiles $T_1, T_2$ is regular, we deduce from Propositions \ref{prop1} and \ref{prop2} the upper and lower bounds $0 < \ord (T_I \cap ( T_1 \times T_2 )) \leq m+n$, for the term $(n - \# \{ j \in \{0, \dots , n-1\} \setminus J_1 \, \vert \, \#I_j = 1 \} )$ vanishes only if $I_n = \{0, \dots , m\} $ and in this case the second term in (\ref{eqnarrayorder}) does not vanish by hypothesis. Moreover, the last two terms equal $m+1$ only if $I_0 = \{0, \dots , m\} $ and $J_2 = \{0, \dots , m\} $ but in this case the first term in (\ref{eqnarrayorder}) is less than $n$ by hypothesis, for $J_1$ cannot be $\{0, \dots , n\} $. Hence, the shelled triangulation of $\Delta_{[n]}  \times \Delta_{[m] }$ given by Corollary \ref{corprimtriangulations} induces a shelled primitive triangulation of $T_1 \times T_2$ using only regular basic tiles. The first part of Theorem \ref{theoproducttiles} is proven. 
When Condition $h$ of \S \ref{subseccond} is not satisfied, we may still assume, applying the involution $E : T_1 \times T_2 \to T_2 \times T_1$ if necessary, that if $n \in J_1$, then $m \in J_2$. If $0 \in J_2$ but $0 \notin J_1$, then for every $I \in I(n,m)$, $\Delta_I \cap \big(  \Delta_{[n]}  \times \Delta_{\{1, \dots , m\}} \big)$ is a face of codimension greater than one as soon as $\# I_0 = 1$ and its vertices are the greatest of $\Delta_I$. The tile $T_I \cap ( T_1 \times T_2 ) $ is then Morse and we get a Morse shelling $\big( T_I \cap ( T_1 \times T_2 ) \big)_{I \in I(n,m)}$ which is tame. If $n \in J_1$ and $m \notin J_2$, we apply the involution $E$ to the tame Morse shelling $\big( T_J \cap ( T_2 \times T_1 ) \big)_{J \in I(m,n)}$ we just obtained to deduce from  Lemma \ref{lemmaexchange} that the collection  $\big( {\cal P}(T_I) \cap ( T_1 \times T_2 ) \big)_{I \in I(n,m)}$  defines a tame Morse shelling on $T_1 \times T_2$.

If $T_1$ and $T_2$ are open simplices, so that $J_1 = \{0, \dots , n\} $ and $J_2 = \{0, \dots , m\} $, then again,  by Propositions \ref{prop1} and \ref{prop2}, for every $I \in I(n,m)$, every $i \in J_2 \setminus \{ 0 \}$ and every $j \in J_1 \setminus \{ n \}$, $T_I \cap \big( \Delta_{[n]}  \times \Delta_{[m] \setminus \{ i \}} \big)$ and $T_I \cap \big( \Delta_{[n] \setminus \{ j \}}  \times \Delta_{[m]} \big)$ are of codimension one in $T_I$ when non-empty. Moreover, $T_I \cap \big( \Delta_{[n]}  \times \Delta_{\{1, \dots , m\}} \big)$ (resp. $T_I \cap \big( \Delta_{\{0, \dots , n-1\}}  \times \Delta_{[m]} \big)$) is either of codimension one in $T_I $ if $\# I_0 > 1$ (resp. $\# I_n = 1$), or included in the facet $T_I \cap \big( \Delta_{\{1, \dots , n\}}  \times \Delta_{[m]} \big)$ (resp. $T_I \cap \big( \Delta_{[n]}  \times \Delta_{\{0, \dots , m-1\}} \big)$) which is removed from $T_I$. Thus, the shelled triangulation of $\Delta_{[n]}  \times \Delta_{[m] }$ given by Corollary \ref{corprimtriangulations} induces a shelled primitive triangulation of $T_1 \times T_2$ also in this case and (\ref{eqnarrayorder}) remains valid. It remains to check that this $h$-tiling uses a unique critical tile, of index $m+n$. But as before, $\ord (T_I \cap ( T_1 \times T_2 )) >0$ for every $I \in I(n,m)$ while $\ord (T_I \cap ( T_1 \times T_2 )) = n+m+1$ forces $I_0 = \{0, \dots , m\}$. However now, $J_1 = \{0, \dots , n\}$ and $J_2 = \{0, \dots , m\}$, so that this staircase provides a critical tile of index $m+n$, namely an open simplex. 

If on the contrary $T_1$ is a closed simplex and $T_2$ an open one, then for every $I \in I(n,m)$, $T_I \setminus \cup_{i =1}^{m}   \big(  \Delta_{[n]} \times \Delta_{[m] \setminus \{ i \}}\big)$ is a basic tile of order $m$ by Proposition \ref{prop1} and $T_I \cap ( T_1 \times T_2 )$ is a basic tile of order $m+1$ if $\# I_0 \neq 1$ and a Morse tile of order $m$ otherwise, the Morse face being of codimension $\# b_I^{-1} (0)$. The only critical tile  of the Morse tiling of $T_1 \times T_2 $ is thus the tile $T_I \cap ( T_1 \times T_2 )$ with $I_n = \{0, \dots , m\} $ and $I_j = \{0\} $ if $j<n$. Its index equals $m$. Moreover, for all non-basic tile $T_I \cap ( T_1 \times T_2 )$ of the tiling, $I \in I(n,m)$, the vertices of the Morse face $T_I \cap \big( \Delta_{[n]}  \times \Delta_{\{1, \dots , m\}} \big)$  are the greatest of $\Delta_I$ with respect to the lexicographic order on $\{0, \dots , n\}  \times \{0, \dots , m\}$, so that the Morse shelling is tame. If $T_2$ is a closed simplex and $T_1$ an open one, we apply the exchange involution $E : T_2 \times T_1 \to T_1 \times T_2$ to the Morse shelling $\big( T_J \cap ( T_2 \times T_1 ) \big)_{J \in I(m,n)}$ we just obtained, to deduce from Lemma \ref{lemmaexchange} that the collection  $\big( {\cal P}(T_I) \cap ( T_1 \times T_2 ) \big)_{I \in I(n,m)}$  defines a tame Morse shelling on $T_1 \times T_2$.

\subsubsection{The case of two non-basic tiles}
\label{subseccaseofnonbasictiles}

If both $T_1$ and $T_2$ are Morse and not basic, then the chosen orders on the vertices of $\sigma_1$ and $\sigma_2$ induce isomorphisms between them and $\Delta_{[n]} \setminus \big( \cup_{j \in J_1}  \Delta_{[n] \setminus \{ j \}} \cup \Delta_{\{k_1, \dots , n\}} \big)$ and $\Delta_{[m]} \setminus \big( \cup_{i \in J_2}   \Delta_{[m] \setminus \{ i \}} \cup \Delta_{\{k_2, \dots , m\}} \big)$ respectively, where $1<k_1 \leq n$, $1<k_2\leq m$, $J_1 \subset \{ k_1 , \dots , n\}$ and $J_2 \subset \{ k_2 , \dots , m\}$. Applying the involution  $E : T_1 \times T_2 \to T_2 \times T_1$ which exchanges the roles of $T_1$ and $T_2$ if necessary, we may assume that if $n \in J_1$, then $m \in J_2$, see Lemma \ref{lemmaexchange}. In this case, we deduce from \S \ref{subseccaseofbasictiles} that for every $I \in I(n,m)$, $T_I \setminus \big( \cup_{j \in J_1}  (\Delta_{[n] \setminus \{ j \}} \times \Delta_{[m]})  \cup_{i \in J_2}  (\Delta_{[n]} \times \Delta_{[m] \setminus \{ i \}}) \big)$ is a basic tile of order $n - \# \{ j \in \{0, \dots , n-1\} \setminus J_1 \, \vert \, \#I_j = 1 \} +  \# \{ i \in J_2 \, \vert \, i \notin b_I (\{1, \dots , n\}) \}$ plus one in case $n \in J_1$ and $\# I_n = 1$, see (\ref{eqnarrayorder}). But from Proposition \ref{prop2} we know that $T_I \cap \big( \Delta_{\{k_1, \dots , n\}} \times \Delta_{[m]} \big)$ is empty if there exists $j \in \{0, \dots , k_1 - 1\}$ such that $\# I_j \neq 1$ while in the opposite case, this intersection contains $T_I \cap \big( \Delta_{[n]} \times \Delta_{\{k_2, \dots , m\}} \big)$ by Proposition \ref{prop1} and is of codimension $k_1$ in $T_I$. In this second case, $T_I \cap ( T_1 \times T_2 )$ is Morse and regular, since its order is not more than $m+n-k_1-k_2+2$. Moreover, the vertices of the Morse face are the greatest of $\Delta_I$. In the first case, we know from Proposition \ref{prop1} that $T_I \cap \big( \Delta_{[n]} \times \Delta_{\{k_2, \dots , m\}} \big)$ is empty if there exists $j \in \{0, \dots , n\}$ such that $\# I_j \neq 1$ and $e_I (j) < k_2$ and of codimension $k+k_2$ otherwise, where $k$ denotes the least integer such that $\# I_k \neq 1$. The tile $T_I \cap ( T_1 \times T_2 )$ is then again Morse and it is singular iff $k=k_1-1$, $I_k = \{0, \dots , m\}$, $J_1 = \{ k_1 , \dots , n\}$ and $J_2 = \{ k_2 , \dots , m\}$, since its order is bounded from above by $m+n-k_1-k_2+2$ with equality, when $k=k_1 - 1$, only in this case. Moreover, the vertices of the Morse face are the greatest of $\Delta_I$.
Hence, the product $T_1 \times T_2$ inherits the Morse shelled primitive triangulation $\big( T_I \cap ( T_1 \times T_2 ) \big)_{I \in I(n,m)}$ which uses a unique critical tile, of index $m+n-k_1-k_2+2$, iff $T_1$ and $T_2$ are critical of indices $n-k_1+1$ and $m-k_2+1$ respectively. 
In the case that $n \in J_1$, but $m \notin J_2$, we apply the involution $E$ to the tame Morse shelling $\big( T_J \cap ( T_2 \times T_1 ) \big)_{J \in I(m,n)}$ we just obtained and deduce from Lemma \ref{lemmaexchange} that the collection $\big( {\cal P}(T_I) \cap ( T_1 \times T_2 ) \big)_{I \in I(n,m)}$ defines a tame Morse shelling on $T_1 \times T_2$. Theorem \ref{theoproducttiles} is thus proven in the case of non basic tiles. 

\subsubsection{The mixed case of one basic and one non basic tiles}

In the mixed case, we may assume that $T_1$ is Morse and $T_2$ basic, applying the involution  $E : T_1 \times T_2 \to T_2 \times T_1$ if necessary.
Then the chosen orders on the vertices of the underlying simplices $\sigma_1$ and $\sigma_2$ induce isomorphisms between them and $\Delta_{[n]} \setminus \big( \cup_{j \in J_1}  \Delta_{[n] \setminus \{ j \}} \cup \Delta_{\{k_1, \dots , n\}} \big)$ and $\Delta_{[m]} \setminus  \cup_{i \in J_2} \Delta_{[m] \setminus \{ i \}}$ respectively, where $1<k_1 \leq n$, $J_1 \subset \{ k_1 , \dots , n\}$ and $J_2 \subset \{ 0 , \dots , m\}$. We first assume that if $n \in J_1$, then $m \in J_2$. In this case, we deduce from \S \ref{subseccaseofbasictiles} that for every $I \in I(n,m)$, $T_I \setminus \big( \cup_{j \in J_1}  (\Delta_{[n] \setminus \{ j \}} \times \Delta_{[m]})  \cup_{i \in J_2 \setminus \{ 0 \}}  (\Delta_{[n]} \times \Delta_{[m] \setminus \{ i \}}) \big)$ is a basic tile of order $n - \# \{ j \in \{0, \dots , n-1\} \setminus J_1 \, \vert \, \#I_j = 1 \} +  \# \{ i \in J_2 \setminus \{ 0 \} \, \vert \, i \notin b_I (\{1, \dots , n\}) \}$ plus one in case $n \in J_1$ and $\# I_n = 1$, see (\ref{eqnarrayorder}). But from Proposition \ref{prop2} we know that $T_I \cap \big( \Delta_{\{k_1, \dots , n\}} \times \Delta_{[m]} \big)$ is empty if there exists $j \in \{0, \dots , k_1 - 1\}$ such that $\# I_j \neq 1$ and in the opposite case, this intersection contains $T_I \cap \big( \Delta_{[n]} \times \Delta_{\{1, \dots , m\}} \big)$ by Proposition \ref{prop1} and is of codimension $k_1$ in $T_I$. In this second case, $T_I \cap ( T_1 \times T_2 )$ is Morse, since if $n \in J_1$ and $\# I_n \neq 1$, then $T_I \cap  \big( \Delta_{\{0, \dots , n-1\}} \times \Delta_{[m]} \big)$ is contained in the facet $T_I \cap  \big( \Delta_{[n]} \times \Delta_{\{0, \dots , m-1\}} \big)$ which is removed by hypothesis. This tile is moreover regular, since its order is not more than $m+n-k_1$, and the vertices of the Morse face are the greatest of $\Delta_I$. In the first case, we know from Proposition \ref{prop1} that $T_I \cap \big( \Delta_{[n]} \times \Delta_{\{1, \dots , m\}} \big)$ is of codimension $k+1$, $k$ being the least integer less than $k_1$ such that $\# I_k \neq 1$. The tile $T_I \cap ( T_1 \times T_2 )$ is then again Morse and it is singular iff $k=k_1-1$, $I_k = \{0, \dots , m\}$, $J_1 = \{ k_1 , \dots , n\}$ and $J_2 = \{ 0 , \dots , m\}$, since its order is bounded from above by $m+n-k$ with equality only in this case. Moreover, the vertices of the Morse face are the greatest of $\Delta_I$. Hence, the product $T_1 \times T_2$ inherits the tame Morse shelled primitive triangulation $\big( T_I \cap ( T_1 \times T_2 ) \big)_{I \in I(n,m)}$ which uses a unique critical tile, of index $m+n-k_1+1$, iff $T_1$ and $T_2$ are critical, of indices $n-k_1+1$ and $m$ respectively. 
If now $n \in J_1$, but $m \notin J_2$, we apply the exchange involution $E$. We deduce from (\ref{eqnarrayorder}) that for every $I \in I(m,n)$, $T_I \setminus \big( \cup_{i \in J_2} (\Delta_{[m] \setminus \{ i \}} \times  \Delta_{[n]}) \cup_{j \in J_1} (  \Delta_{[m] } \times \Delta_{[n] \setminus \{ j \}} ) \big)$ is a basic tile of order
$m - \# \{ i \in \{0, \dots , m-1\} \setminus J_2 \, \vert \, \#I_i = 1 \} +  \# \{ j \in J_1  \, \vert \, j \notin b_I (\{1, \dots , m\}) \}$. From Proposition \ref{prop1} we know that $T_I \cap ( \Delta_{[m]} \times \Delta_{\{k_1, \dots , n\}} )$ is empty if there exists $i \in \{0, \dots , m-1\}$ such that $b_I (i) \neq e_I (i) < k_1$ and otherwise, it is of codimension $k + k_1$ in $T_I$, where $k$ denotes the least integer such that $\# I_k \neq 1$. Moreover, the vertices of the Morse face are the greatest of $\Delta_I$. We deduce that $T_I \cap ( T_2 \times T_1 )$ is Morse and its order is bounded from above by $m+n+1-k-k_1$, with equality only if $J_1 = \{ k_1 , \dots , n\}$, $J_2 = \emptyset$ and $I_m = \{ 0 , \dots , n \}$. The collection  $\big( T_I \cap ( T_2 \times T_1 ) \big)_{I \in I(m,n)}$ thus defines a tame Morse shelling of  $T_2 \times T_1$ which contains a unique critical tile, of index $n+1-k_1$ iff $T_1$ and $T_2$ are both critical, of indices $n+1-k_1$ and $0$ respectively. By Lemma \ref{lemmaexchange}, the collection $\big( {\cal P}(T_I) \cap ( T_1 \times T_2 ) \big)_{I \in I(n,m)}$ then defines the tame Morse shelling on $T_1 \times T_2$ we are looking for. Hence the result.

\subsubsection{Remarks on the proof of Theorem \ref{theoproducttiles}}
\label{subseccond}

1) The shelling of $T_1 \times T_2$ is inherited from the shelling of $\sigma_1 \times \sigma_2$ given by Corollary \ref{corprimtriangulations}, via the choice of total orders on the vertices of the underlying simplices $\sigma_1$ and $\sigma_2$. When $T_1$ and $T_2$ are both basic, these orders fix isomorphisms between $T_1$, $T_2$ and $\Delta_{[n]} \setminus \cup_{j \in J_1} \Delta_{[n] \setminus \{ j \}}$, $\Delta_{[m]} \setminus \cup_{i \in J_2} \Delta_{[m] \setminus \{ i \}}$ respectively, where $n = \dim \sigma_1$, $m = \dim \sigma_2$, $J_1 \subset \{ 0 , \dots , n\}$ and $J_2 \subset \{ 0 , \dots , m\}$. In order to get $h$-tilings on $T_1 \times T_2$, we need to assume the\\

{\it Condition $h$} : If $\{0,n\} \subset J_1$, then  $\{0,m\} \cap J_2 \neq \emptyset$ and vice versa, if $\{0,m\} \subset J_2$, then $\{0,n\} \cap J_1 \neq \emptyset$.\\

Indeed, for example, neither the shelling $(T_I )_{I \in I(n,m)}$, nor the shelling $\big({\cal P}(T_I )\big)_{I \in I(n,m)}$ induces an $h$-tiling on $\Delta_{[n]} \setminus \big( (\Delta_{\{0, \dots , n-1\}}  \times \Delta_{[m]} )  \cup( \Delta_{\{1, \dots , n\} }  \times \Delta_{[m]} ) \big) $ in general. \\

2) Likewise, when $T_1$ or $T_2$ is not basic, we need to assume  the\\

{\it Condition $M$} :  If $T_1$ (resp. $T_2$) is a Morse tile which is not basic, then the vertices of its Morse face are the greatest among those of $\sigma_1$ (resp. of $\sigma_2$).\\

Indeed, for example, neither the shelling $(T_I )_{I \in I(n,m)}$, nor the shelling $\big({\cal P}(T_I )\big)_{I \in I(n,m)}$ induces a Morse tiling on $\big( \Delta_{[n]} \setminus (\Delta_{\{0, \dots , n-1\}} \cup \Delta_{\{n-1, n\} } ) \big) \times \big( \Delta_{[m]}  \setminus ( \Delta_{\{1, \dots , m\}} \cup \Delta_{\{0, 1\} } ) \big)$ in general. \\

3) Finally, even if these Conditions $h$ or $M$ are satisfied, the shelling $(T_I )_{I \in I(n,m)}$ of $\Delta_{[n]} \times \Delta_{[m]}$ given by Corollary \ref{corprimtriangulations} need not induce a shelling on $T_1 \times T_2$, it has been sometimes necessary to apply the palindromic isomorphism ${\cal P}$ to $(T_I )_{I \in I(n,m)}$, which amounts to reverse the total orders of the vertices of $\sigma_1$ and $\sigma_2$. The latter does not affect the triangulation $(\Delta_I )_{I \in I(n,m)}$ given by Corollary \ref{corprimtriangulations}.

\subsection{Proof of Theorem \ref{theoduality}}
\label{subsecpftheoduality}

Let us first prove that we may assume the tiles to be basic. 

\begin{prop}
\label{propMorsetobasic}
Let $T_1, T_2$ be two Morse tiles with underlying basic tiles $T'_1, T'_2$. Then, the Morse shellings of $T_1 \times T_2$ and $T'_1 \times T'_2$ given by Theorem \ref{theoproducttiles} have same $h$-vector.
\end{prop}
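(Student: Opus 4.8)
The plan is to reuse verbatim the construction behind Theorem \ref{theoproducttiles} and to compare the two shellings tile by tile, the point being that passing from $T'_i$ to $T_i$ only removes the lower-dimensional Morse face and therefore changes neither the order of any tile nor its (non)emptiness. Concretely, I would first fix on the underlying simplices $\sigma_1,\sigma_2$ a single pair of total orders, chosen so that Condition $M$ of \S\ref{subseccond} holds for the Morse tiles $T_1,T_2$; since this is merely one admissible order for the basic tiles $T'_1,T'_2$ as well, it may be used to build \emph{both} shellings from the same primitive triangulation $(\Delta_I)_{I\in I(n,m)}$ of Corollary \ref{corprimtriangulations}. The only freedom left in the proof of Theorem \ref{theoproducttiles} is whether one intersects with the family $(T_I)_I$, its image $(\mathcal{P}(T_I))_I$ under Lemma \ref{lemmapalindautom}, or the exchanged family via Lemma \ref{lemmaexchange}; but this choice is dictated entirely by the sets $J_1,J_2$ of removed \emph{facets} (through the conditions of the form ``if $n\in J_1$ then $m\in J_2$''), which are common to $T_i$ and $T'_i$. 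Hence the two shellings are indexed by the same $I(n,m)$ with the same symmetry applied, and it suffices to compare $T_I\cap(T_1\times T_2)$ with $T_I\cap(T'_1\times T'_2)$ for each $I$.

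Next I would establish the key claim: for every $I\in I(n,m)$, the tiles $T_I\cap(T_1\times T_2)$ and $T_I\cap(T'_1\times T'_2)$ are simultaneously empty and, when nonempty, have the same order. Since the underlying basic tile of $T_i$ is $T'_i$, one has $T_1\times T_2\subseteq T'_1\times T'_2$, and the difference is the Morse-face locus, i.e. the intersection with $\Delta_{\{k_1,\dots,n\}}\times\Delta_{[m]}$ and $\Delta_{[n]}\times\Delta_{\{k_2,\dots,m\}}$. By the case analysis in \S\ref{subseccaseofnonbasictiles}, carried out through Propositions \ref{prop1} and \ref{prop2}, intersecting $T_I$ with this locus either gives the empty set, in which case $T_I\cap(T_1\times T_2)=T_I\cap(T'_1\times T'_2)$ and nothing at all changes, or else produces exactly the Morse face of a Morse tile whose order is still given by the very same formula (\ref{eqnarrayorder}). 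This is the crux: the order counts only \emph{missing facets}, so deleting a Morse face, which has codimension at least two in $\Delta_I$, cannot alter it. Moreover, because all these shellings are pure dimensional of dimension $m+n$, removing a face of codimension $\geq 2$ from a nonempty, full-dimensional basic tile leaves it nonempty; together with the trivial implication coming from the inclusion $T_1\times T_2\subseteq T'_1\times T'_2$, this gives the equivalence of emptiness.

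Granting the claim, the assignment $I\mapsto\bigl(T_I\cap(T_1\times T_2),\,T_I\cap(T'_1\times T'_2)\bigr)$ sets up a bijection between the tiles of the two shellings that preserves order, whence $h(T_1\times T_2)=h(T'_1\times T'_2)$, as desired. I expect the only genuinely delicate point to be the bookkeeping that confirms order-preservation uniformly across the basic, non-basic, and mixed cases, but this is not new work: it is precisely what formula (\ref{eqnarrayorder}) already records in the proof of Theorem \ref{theoproducttiles}, where the order of each product tile is computed in terms of $J_1,J_2$ and $I$ alone, with the integers $k_1,k_2$ governing the Morse faces entering only to decide whether a tile is basic or (regular/critical) Morse, never its order. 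Assembling these observations yields the proposition.
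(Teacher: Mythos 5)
Your proposal is correct and follows essentially the same route as the paper's proof: both fix orders satisfying Condition $M$, build both shellings from the same staircase triangulation with the same choice of symmetry (dictated by $J_1,J_2$ alone), and invoke Propositions \ref{prop1} and \ref{prop2} to see that the Morse-face locus meets each $T_I$ in a face of codimension at least two, hence contributes nothing to the order. The only cosmetic difference is your extra discussion of simultaneous (non)emptiness, which is automatic since each $T_I\cap(T_1\times T_2)$ is a full-dimensional tile of the shelling.
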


\begin{proof}
If $T_1$ (resp. $T_2$) is a Morse tile which is not basic, we denote by $\sigma_1$ (resp. $\sigma_2$) its underlying simplex and by $\tau_1$ (resp. $\tau_2$) its Morse face. The Morse shelling given by Theorem \ref{theoproducttiles} is inherited from a total order on the vertices of $\sigma_1$ (resp. $\sigma_2$)  such that the ones of $\tau_1$ (resp. $\tau_2$) are the greatest, see Condition $M$ in \S \ref{subseccond}.
The product  $\sigma_1 \times \sigma_2$ inherits then a triangulation $(\Delta_I )_{I \in I(n,m)}$ and a shelling $(T_I )_{I \in I(n,m)}$ given by Corollary \ref{corprimtriangulations}. We know from Proposition \ref{prop2}
(resp. Proposition \ref{prop1}) that for every $I \in I(n,m)$, the intersection of $T_I$  with $\tau_1 \times \sigma_2$ (resp. $\sigma_1 \times \tau_2$) is either empty, or of codimension at least equal to the one of $\tau_1$ in $\sigma_1$ (resp. $\tau_2$ in $\sigma_2$). This intersection thus does not contribute to the order of $T_I \cap (T_1 \times T_2)$.
Thus, if the tame Morse shellling of $T_1 \times T_2 $ given by Theorem \ref{theoproducttiles} is $\big( T_I \cap ( T_1 \times T_2 ) \big)_{I \in I(n,m)}$, then the Morse shelling of $T'_1 \times T'_2 $ given by Theorem \ref{theoproducttiles} is $\big( T_I \cap ( T'_1 \times T'_2 ) \big)_{I \in I(n,m)}$ and has the same $h$-vector. Otherwise, Theorem \ref{theoproducttiles} provides the Morse shellling $\big( {\cal P}(T_I) \cap ( T_1 \times T_2 ) \big)_{I \in I(n,m)}$, which is the image of  $\big( T_J \cap ( T_2 \times T_1 ) \big)_{J \in I(m,n)}$,  under the exchange involution by Lemma \ref{lemmaexchange}, so that it provides the Morse shellling $\big( {\cal P}(T_I) \cap ( T'_1 \times T'_2 ) \big)_{I \in I(n,m)}$ on $T'_1 \times T'_2 $ as well, which has same $h$-vector as $\big( {\cal P}(T_I) \cap ( T_1 \times T_2 ) \big)_{I \in I(n,m)}$. Hence the result. 
\end{proof}

Let us now prove that all Morse tilings of a product $T_1 \times T_2$ given by Theorem \ref{theoproducttiles} have same $h$-vector. By Proposition \ref{propMorsetobasic} we may assume the tiles to be basic. 

\begin{prop}
\label{propsamemvector}
Let $T_1, T_2$ be two basic tiles. Then, all Morse shellings of  $T_1 \times T_2$ given by Theorem \ref{theoproducttiles} have same $h$-vector.
\end{prop}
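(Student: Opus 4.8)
The plan is to show that the $h$-vector of every such shelling depends only on the four integers $n=\dim\sigma_1$, $k_1=\ord(T_1)$, $m=\dim\sigma_2$ and $k_2=\ord(T_2)$, and not on the total orders chosen on the vertices of $\sigma_1$ and $\sigma_2$, nor on the possible use of ${\cal P}$ or of the exchange $E$. First I would observe that the $h$-vector only records the orders of the tiles, that is their numbers of missing facets, and that by (\ref{eqnarrayorder}) this count is insensitive to the Morse faces that may appear; hence it suffices to control the distribution of the orders $\ord(T_I\cap(T_1\times T_2))$, $I\in I(n,m)$. A choice of orders amounts to a choice of subsets $J_1\subseteq\{0,\dots,n\}$ and $J_2\subseteq\{0,\dots,m\}$ of cardinalities $k_1$ and $k_2$ recording the removed facets. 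By Lemma \ref{lemmapalindautom} the automorphism ${\cal P}$ preserves the order of every tile and carries the $(J_1,J_2)$-tiling to the $(\check{J}_1,\check{J}_2)$-one, while by Lemma \ref{lemmaexchange} the involution $E$ identifies the $(n,J_1,m,J_2)$- and $(m,J_2,n,J_1)$-tilings order-preservingly; these absorb the ${\cal P}$- and $E$-choices, so the only remaining point is the independence of the $h$-vector from the particular subsets $J_1,J_2$ of given cardinalities.

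For this I would pass to the $f$-vector of the triangulated set $T_1\times T_2$. By the end of \S\ref{subsectriangulations} its triangulation is the cartesian product of $\Delta_{[n]}$ and $\Delta_{[m]}$, whose simplices are the chains of the grid poset $\{0,\dots,n\}\times\{0,\dots,m\}$ ordered componentwise; an open simplex lies in $T_1\times T_2$ exactly when the set of first (resp. second) coordinates of its vertices contains $J_1$ (resp. $J_2$). Counting by inclusion-exclusion over the removed facets, and using that deleting the columns indexed by $A\subseteq J_1$ and the rows indexed by $B\subseteq J_2$ leaves again a grid poset $\{0,\dots,n-\#A\}\times\{0,\dots,m-\#B\}$, one gets for every $d$
\begin{equation*}
f_d(T_1\times T_2)=\sum_{a=0}^{k_1}\sum_{b=0}^{k_2}(-1)^{a+b}\binom{k_1}{a}\binom{k_2}{b}\,g_d(n-a,m-b),
\end{equation*}
where $g_d(n',m')$ denotes the number of $d$-chains of $\{0,\dots,n'\}\times\{0,\dots,m'\}$. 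The right-hand side depends only on $(n,k_1,m,k_2)$, so the whole $f$-vector of $T_1\times T_2$ does.

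It then remains to recover the $h$-vector from the $f$-vector. For those orders satisfying Condition $h$ of \S\ref{subseccond} the shelling is an $h$-tiling and Theorem $4.9$ of \cite{SaWel1} gives $\sum_i h_i X^{n+m+1-i}=\sum_i f_{i-1}(X-1)^{n+m+1-i}$ with $f_{-1}=h_0$, so that $h$ is determined by the $f$-vector together with $h_0$, the number of closed top tiles, which is fixed by the same considerations. I expect the main obstacle to be the orders violating Condition $h$, for which non-basic Morse tiles occur: there the relation above is no longer valid tile by tile, a Morse tile of order $k$ carrying fewer faces than the basic tile of the same order, the deficit being exactly the faces of its Morse face, which are covered by later tiles of the shelling. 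The plan is to show that these deficits cancel in the aggregate and that the resulting correction is governed only by the critical tiles, whose number and indices are themselves determined by $(n,k_1,m,k_2)$ through Theorem \ref{theoproducttiles} (a product of basic tiles is critical iff both factors are), so that $h$ is again forced by $f$. Should this Morse analogue of the Dehn-Sommerville relation prove awkward to set up, the fallback is to establish the invariance of the order-distribution $\sum_I t^{\,\ord(T_I\cap(T_1\times T_2))}$ directly from (\ref{eqnarrayorder}) by a lattice-path bijection transporting one removed facet to another; since the staircase triangulation is not symmetric under arbitrary permutations of vertices, constructing this bijection is the technical heart of the argument.
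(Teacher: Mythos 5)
Your opening observation is the right one --- the $h$-vector only records orders of tiles and, by (\ref{eqnarrayorder}), the order of $T_I\cap(T_1\times T_2)$ is insensitive to any Morse face, since such a face has codimension greater than one. Your $f$-vector computation is also correct: the simplices of the staircase triangulation are the chains of the grid poset, an open simplex lies in $T_1\times T_2$ iff its vertex set projects onto $J_1$ and onto $J_2$, and the inclusion--exclusion you write does show that $f(T_1\times T_2)$ depends only on $(n,k_1,m,k_2)$. The genuine gap is the step from $f$ back to $h$. Theorem $4.9$ of \cite{SaWel1} converts the $f$-vector of the \emph{set} into the $h$-vector of the \emph{tiling} only when the tiling is an $h$-tiling; for the orders violating Condition $h$ the shelling of Theorem \ref{theoproducttiles} genuinely contains non-basic Morse tiles, and these occur for the very same pair $(T_1,T_2)$ for which other orders produce $h$-tilings --- so the proposition is precisely asserting that an $h$-tiling and a Morse tiling of the same set have the same $h$-vector, which your $f$-vector invariant cannot see. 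A Morse tile of order $k$ contributes $f(T^{N}_k)$ minus the faces of its Morse face not already contained in the removed facets, so $f(S)=\sum_k h_k(\tau)f(T^N_k)-(\text{deficit})$, and recovering $h(\tau)$ requires controlling the full deficit vector, graded by the order of the tile carrying it. Your claim that this correction is ``governed only by the critical tiles'' is not right: every non-basic tile, regular or critical, contributes a deficit, and showing that the graded total of these deficits is itself an invariant is essentially as hard as the original statement. Both of your proposed resolutions (aggregate cancellation, or a lattice-path bijection on the order statistics) are left unexecuted, and you yourself flag the second as ``the technical heart.''

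For comparison, the paper avoids the $f$-vector entirely and works directly with the order statistics, which is where your fallback was pointing. It sets up a deletion recursion: removing one element $j_1\in J_1$ splits the staircases $I$ into those with $\#I_{j_1}>1$ (where $j_1$ contributes $+1$ to the order) and those with $\#I_{j_1}=1$ (where it contributes nothing and the staircase forgets to an element of $I(n-1,m)$), yielding
$h(T_1\times T_2)=h(T'_1\times T_2)-h(\widetilde{T}_1\times T_2)^{[0]}+h(\widetilde{T}_1\times T_2)^{[1]}$
with $\widetilde{T}_1$ of one lower dimension. The key point, which is exactly your opening observation, is that this recursion remains valid verbatim when the intersection with a removed face has codimension greater than one and hence produces a Morse face rather than a missing facet: the order bookkeeping does not change. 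Iterating reduces everything to products of closed simplices, and the outcome visibly depends only on $\#J_1$ and $\#J_2$. A repaired version of your argument would have to either reproduce this recursion or carry out the bijection you defer; the $f$-vector detour does not shorten the path.
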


\begin{proof}
We may assume that $T_1 = \Delta_{[n]} \setminus \cup_{j \in J_1} \Delta_{[n] \setminus \{ j \}}$ and $T_2 = \Delta_{[m]} \setminus \cup_{i \in J_2} \Delta_{[m] \setminus \{ i \}}$,
where $J_1 \subset \{0, \dots , n\}$ and $J_2 \subset \{0, \dots , m\}$. We have to assume in addition that if $n \in J_1$, then $m \in J_2$. This can be done after possibly applying the involution $E : T_1 \times T_2 \to T_2 \times T_1$ which exchanges the roles of $T_1$ and $T_2$. The tiles of the Morse tiling given by Theorem \ref{theoproducttiles} are then the $\big( T_I \cap ( T_1 \times T_2 ) \big)_{I \in I(n,m)}$ and we are going to prove that for every $k \in \{ 0, \dots , m+n+1 \}$, the number of tiles of order $k$ in this collection only depends on $\# J_1$ and $\# J_2$. 
This implies the result, for applying the exchange involution $E$ or not does not affect the $h$-vector as well. We proceed by induction on the dimensions $n,m$. For every $j_1 \in J_1 \setminus \{0,n\}$ (resp. $i_2 \in J_2 \setminus \{0,m\}$), let us compare the $h$-vectors of the tilings of $T_1 \times T_2$ and $T'_1 \times T_2$ (resp. $T_1 \times T'_2$),
where $T'_1 = \Delta_{[n]} \setminus \cup_{j \in J_1\setminus \{j_1\}} \Delta_{[n] \setminus \{ j \}}$ (resp. $T'_2 = \Delta_{[m]} \setminus \cup_{i \in J_2\setminus \{i_2\}} \Delta_{[m] \setminus \{ i \}}$). Let $I \in I(n,m)$. From Proposition \ref{prop2} (resp. Proposition \ref{prop1}) we know that the intersection of $\Delta_{[n] \setminus \{ j_1\}} \times \Delta_{[m]} $
(resp. $\Delta_{[n] } \times \Delta_{[m] \setminus \{ i_2 \}}$) with $T_I$ is empty if $\# I_{j_1} >1$ (resp. if $i_2 \in b_I (\{1, \dots , n\})$) and is of codimension one otherwise. In the first case,
$j_1$ (resp. $i_2$) does not contribute to the order of $T_I \cap (T_1 \times T_2)$, so that this order coincides with the one of $T_I \cap (T'_1 \times T_2)$ (resp. $T_I \cap (T_1 \times T'_2)$) and in the second case, its contribution equals one, so that $\ord (T_I \cap (T_1 \times T_2)) = \ord (T_I \cap (T'_1 \times T_2)) + 1$ (resp. $\ord (T_I \cap (T_1 \times T_2)) = \ord (T_I \cap (T_1 \times T'_2)) + 1$). The staircases $I \in I(n,m)$ for which $\# I_{j_1} =1$ (resp. $i_2 \notin b_I (\{1, \dots , n\})$) are in bijective correspondence with the staircases $\tilde{I} \in I(n-1,m)$ (resp. $\tilde{I} \in I(n,m-1)$), this correspondence $\for_{j_1}$ (resp. $\for_{i_2}$) being induced by the inclusion $\{0, \dots , n\} \setminus \{ j_1\} \to \{0, \dots , n\}$ (resp. $\{0, \dots , m\} \setminus \{ i_2 \} \to \{0, \dots , m\}$). But if $\tilde{I} = \for_{j_1} (I)$ (resp. $\tilde{I} = \for_{i_2} (I)$)  with $\# I_{j_1} =1$
(resp. $i_2 \notin b_I (\{1, \dots , n\})$) and if $\widetilde{T}_1 = \Delta_{[n] \setminus \{ j_1\}} \setminus \cup_{j \in J_1\setminus \{ j_1\}} \Delta_{[n] \setminus \{ j,j_1 \}}$ (resp. $\widetilde{T}_2 = \Delta_{[m] \setminus \{ i_2 \}} \setminus \cup_{i \in J_2 \setminus \{ i_2 \}} \Delta_{[m] \setminus \{ i,i_2 \}}$), then $\ord (T_I \cap (T'_1 \times T_2)) = \ord (T_{\tilde{I}} \cap (\widetilde{T}_1 \times T_2))$ (resp. $\ord (T_I \cap (T_1 \times T'_2)) = \ord (T_{\tilde{I}} \cap (T_1 \times \widetilde{T}_2))$) by Proposition \ref{prop2} (resp. Proposition \ref{prop1}). We deduce the relation
\begin{eqnarray}
\label{eqnarraymtilde}
h(T_1 \times T_2) = h(T'_1 \times T_2)  - h(\widetilde{T}_1 \times T_2)^{[0]} + h(\widetilde{T}_1 \times T_2)^{[1]},
\end{eqnarray}
where for every $v=(v_0, \dots , v_{m+n}) \in \Z^{m+n+1}$, $v^{[0]} =(v_0, \dots , v_{m+n}, 0) \in \Z^{m+n+2}$ and $v^{[1]} =(0,v_0, \dots , v_{m+n}) \in \Z^{m+n+2}$. Likewise, 
\begin{eqnarray}
\label{eqnarraymtilde2}
h(T_1 \times T_2) = h(T_1 \times T'_2)  - h(T_1 \times \widetilde{T}_2)^{[0]} + h(T_1 \times \widetilde{T}_2)^{[1]}.
\end{eqnarray}
By deleting one after the other the elements of $J_1 \setminus \{0,n\}$ and  $J_2 \setminus \{0,m\}$, we then express the $h$-vector of $T_1 \times T_2$ in terms of the $h$-vector of product of tiles of dimensions $\leq n$ and $\leq m$ having lower order, and this expression does not depend on the specific position of the elements in $J_1 \setminus \{0,n\}$ and  $J_2 \setminus \{0,m\}$, it only depends on the number of such elements. We can likewise delete $n \in J_1$ if $m \in J_2$ and $0 \in J_2$ if $0 \in J_1$, the formula (\ref{eqnarraymtilde}) being valid in this case and we may also delete $0 \in J_1$ if $0 \notin J_2$  and $m \in J_2$ if $n \notin J_1$, in the same way as before. The only delicate case is the case where $0 \in J_2$ but $0 \notin J_1$ since we have applied the exchange involution $T_1 \times T_2 \to T_2 \times T_1$ in order to make sure that $m \in J_2$ if $n \in J_1$. In such a case, the tiling of $T_1 \times T_2$ is Morse and not an $h$-tiling. Let $I \in I(n,m)$. The intersection of $\Delta_{[n]} \times \Delta_{\{1, \dots , m\}}$ with $T_I$ is no longer empty if $\#I_0 = 1$, but it is of codimension greater than one in $T_I$, so that it is a Morse face which does not contribute to the order of $T_I \cap (T_1 \times T_2)$. If $\#I_0 >1$, it is of codimension one and contributes as one to the order of $T_I \cap (T_1 \times T_2)$. If we delete $0 \in J_2$, we thus again get, in the same way, the formula (\ref{eqnarraymtilde2}), even if the natures of the tilings of $T_1 \times T_2$, $T_1 \times T'_2$ and $T_1 \times \widetilde{T}_2$ now differ. We thus deduce by applying inductively finitely many times (\ref{eqnarraymtilde}), (\ref{eqnarraymtilde2}) an expression of $h( T_1 \times T_2)$ in terms of the $h$-vectors of the product of closed simplices of dimensions $\leq n$ and $\leq m$ and this expression only depends on the cardinalities of $J_1$ and $J_2$. Moreover, the $h$-vectors of the tilings 
$(T_I)_{I \in I(n,m)}$ or $({\cal P}(T_I))_{I \in I(n,m)}$ given by Corollary \ref{corprimtriangulations} coincide and only depend on the dimension $n,m$ of the closed simplices. The result follows.
\end{proof}

\begin{remark}
The formulas (\ref{eqnarraymtilde}), (\ref{eqnarraymtilde2}) make it possible to compute by induction the $h$-vector given by Theorem \ref{theoproducttiles} in terms of the $h$-vectors given by Corollary \ref{corprimtriangulations}. Moreover, the latter can be computed using a similar induction or by computing the face vector of the cartesian product of two simplices, but we do not detail these computations here. 
\end{remark}

It remains to prove the formula given in Theorem \ref{theoduality} and thanks to Proposition \ref{propMorsetobasic}, it is enough to prove it for basic tiles. 

\begin{prop}
\label{propduality}
Let $T_1, T_2$ be two basic tiles and $\widecheck{T}_1, \widecheck{T}_2$ be their dual ones. Then, all Morse shellings of  $T_1 \times T_2$ and $\widecheck{T}_1 \times \widecheck{T}_2$ given by Theorem \ref{theoproducttiles} satisfy $h(\widecheck{T}_1 \times \widecheck{T}_2) = \widecheck{h}(T_1 \times T_2)$.
\end{prop}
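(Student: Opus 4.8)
The plan is to prove a per-tile order-complementarity identity and sum it over all staircases. By Proposition \ref{propMorsetobasic} we may assume $T_1,T_2$ are basic, say $T_1=\Delta_{[n]}\setminus\bigcup_{j\in J_1}\Delta_{[n]\setminus\{j\}}$ and $T_2=\Delta_{[m]}\setminus\bigcup_{i\in J_2}\Delta_{[m]\setminus\{i\}}$, so that $\widecheck T_1$ (resp. $\widecheck T_2$) removes the complementary facets indexed by $\bar J_1=\{0,\dots,n\}\setminus J_1$ (resp. $\bar J_2=\{0,\dots,m\}\setminus J_2$). Applying the exchange involution $E$ of Lemma \ref{lemmaexchange} if necessary, which affects neither the $h$-vectors nor the duality to be proved, I may assume that the Morse shellings of both products are induced by the standard tiling $(T_I)_{I\in I(n,m)}$ of $\Delta_{[n]}\times\Delta_{[m]}$ of Corollary \ref{corprimtriangulations}; the case where the palindromic tiling $({\cal P}(T_I))_I$ is needed is brought back to this one using that ${\cal P}$ preserves orders and commutes with the staircase involution $I\mapsto\check I$.

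First I would set up the bijection. Since $I\mapsto\check I$ is an involution of $I(n,m)$ by Lemma \ref{lemmafI}, the family $\big(T_{\check I}\cap(\widecheck T_1\times\widecheck T_2)\big)_{I\in I(n,m)}$ is, after reindexing, exactly the tiling of $\widecheck T_1\times\widecheck T_2$. The heart of the proof is then the identity
\[
\ord\big(T_I\cap(T_1\times T_2)\big)+\ord\big(T_{\check I}\cap(\widecheck T_1\times\widecheck T_2)\big)=m+n+1,
\]
valid for every $I\in I(n,m)$. Indeed, counting the staircases realizing each order on both sides turns this into $h_k(\widecheck T_1\times\widecheck T_2)=h_{m+n+1-k}(T_1\times T_2)$ for all $k$, which is precisely $h(\widecheck T_1\times\widecheck T_2)=\widecheck h(T_1\times T_2)$.

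To prove the identity I would transport both tiles into the single simplex $\Delta_I$ by means of the palindromic automorphism ${\cal P}$, which maps $\Delta_{\check I}$ onto $\Delta_I$ and preserves orders by Lemma \ref{lemmapalindautom}. As ${\cal P}\big(T_{\check I}\cap(\widecheck T_1\times\widecheck T_2)\big)$ is then a subtile of $\Delta_I$ of the same order, the identity becomes the assertion that among the $m+n+1$ facets of $\Delta_I$, each is removed by exactly one of the two tiles $T_I\cap(T_1\times T_2)$ and ${\cal P}\big(T_{\check I}\cap(\widecheck T_1\times\widecheck T_2)\big)$. By definition $T_I$ removes the facet opposite the top vertex $(j,e_I(j))$ of each non-singleton block with $j<n$, whereas a direct computation shows that ${\cal P}(T_{\check I})$ removes the facet opposite the bottom vertex $(j,b_I(j))$ of each non-singleton block with $j>0$; the remaining removals come from the product facets, governed through Propositions \ref{prop1} and \ref{prop2} by $J_1,J_2$ on one side and by the complementary sets $\bar J_1,\bar J_2$ on the other. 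I would then check, facet by facet according to the position of the vertex $(j,i)$ in the staircase — interior of a block, shared block endpoint, or extreme index — that these four families of removed facets partition the facet set of $\Delta_I$.

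The main obstacle is exactly this facet bookkeeping at the boundary of the combinatorial picture. Since the definition of $T_I$ treats the top and the bottom of each block asymmetrically, the cases where a vertex is a shared endpoint $e_I(j)=b_I(j+1)$ of two consecutive blocks, together with the extreme cases involving the indices $0$ and $n$ (resp. $0$ and $m$) — precisely the cases where Propositions \ref{prop1} and \ref{prop2} branch into the empty, codimension-one and Morse alternatives — must be handled carefully to guarantee that the complementarity is exact, with no facet removed twice and none omitted. An essentially equivalent route, which isolates the same difficulty, is to argue by induction on $\#J_1+\#J_2$: the reduction formulas \eqref{eqnarraymtilde} and \eqref{eqnarraymtilde2} are compatible with the reversal $\widecheck{\,\cdot\,}$ because $\widecheck{v^{[0]}}=\widecheck v^{[1]}$, so duality propagates from products of lower order and one is reduced to the base case of a product of two closed simplices against a product of two open simplices, namely the displayed identity for $J_1=J_2=\emptyset$.
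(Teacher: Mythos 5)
Your architecture coincides with the paper's: reduce to basic tiles via Proposition \ref{propMorsetobasic}, prove the per-staircase identity $\ord\big(T_I\cap(T_1\times T_2)\big)+\ord\big(T_{\check I}\cap(\widecheck{T}_1\times\widecheck{T}_2)\big)=m+n+1$, and conclude by summing over the involution $I\mapsto\check I$; the displayed identity is literally the one the paper establishes. The gap is in your proposed proof of that identity. The facet-complementarity claim — that after transport by ${\cal P}$ the facets removed from $\Delta_I$ by the two tiles partition its facet set — is false in the setup you adopt, namely keeping a single vertex ordering and taking $\bar J_1=\{0,\dots,n\}\setminus J_1$, $\bar J_2=\{0,\dots,m\}\setminus J_2$ literally. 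Take $n=2$, $m=1$, $J_1=\{0\}$, $J_2=\emptyset$ and the self-dual staircase $I=(\{0\},\{0,1\},\{1\})$, so $\Delta_I$ has vertices $(0,0),(1,0),(1,1),(2,1)$. Then $T_I\cap(T_1\times T_2)$ removes the facets opposite $(1,1)$ and $(0,0)$, while $T_{\check I}\cap(\widecheck{T}_1\times\widecheck{T}_2)$ removes the facets opposite $(1,1)$ and $(2,1)$ together with a genuine Morse face, the codimension-two edge spanned by $(1,1),(2,1)$ (this is the case "$0\in J_2$ but $0\notin J_1$" of \S \ref{subseccaseofbasictiles}, where the shelling of the dual product is Morse but not basic). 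After applying ${\cal P}$, the two removed-facet sets overlap in the facet opposite $(0,0)$ and both miss the facet opposite $(2,1)$: the orders still sum to $4=m+n+1$, but not for the set-theoretic reason you invoke, and in general the order of a tile of the dual shelling is not the number of facets of $\Delta_{\check I}$ it loses.

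What makes the computation tractable in the paper is a normalization you skip: by Proposition \ref{propsamemvector} the $h$-vector depends only on $\#J_1$ and $\#J_2$, so one may choose the vertex orderings on the two products independently, taking $T_1\cong\Delta_{[n]}\setminus\cup_{j=0}^{k-1}\Delta_{[n]\setminus\{j\}}$ and $\widecheck{T}_1\cong\Delta_{[n]}\setminus\cup_{j=0}^{n-k}\Delta_{[n]\setminus\{j\}}$ — an initial segment of the complementary cardinality rather than the literal complementary set — and similarly for $T_2$. With that choice, formula (\ref{eqnarrayorder}) gives closed expressions for both orders, and adding them using $b_I(\{1,\dots,n\})=e_I(\{0,\dots,n-1\})$ and $\#\,e_I(\{0,\dots,n-1\})=n-\#\{j\in\{1,\dots,n-1\}\mid\#I_j=1\}$ yields $m+n+1$ in a few lines, with no facet bookkeeping at all. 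Your alternative inductive route meets the same obstruction in another guise: deleting an element of $J_1$ in (\ref{eqnarraymtilde}) corresponds to adding one to $\bar J_1$, so the recursions on the two sides of the duality run in opposite directions, and the observation $\widecheck{v^{[0]}}=\widecheck{v}^{[1]}$ by itself does not close the induction.
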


\begin{proof}
Let $n,m$ (resp. $k,l$) be the dimensions (resp. orders) of $T_1$ and $T_2$. Let us choose total orders on the vertices of  $T_1, T_2$ (resp. $\widecheck{T}_1, \widecheck{T}_2$) in order to obtain isomorphisms with
$\Delta_{[n]} \setminus \cup_{j=0}^{k-1} \Delta_{[n] \setminus \{ j \}}$ and $ \Delta_{[m]} \setminus \cup_{i=0}^{l-1} \Delta_{[m] \setminus \{ i \}}$
(resp. $\Delta_{[n]} \setminus \cup_{j=0}^{n-k} \Delta_{[n] \setminus \{ j \}}$ and $ \Delta_{[m]} \setminus \cup_{i=0}^{m-l} \Delta_{[m] \setminus \{ i \}}$).
From (\ref{eqnarrayorder}), we know that for every $I \in I(n,m)$, 
$$\ord \big(T_I \cap ( T_1 \times T_2 )\big) = n - \# \{ j \in \{k, \dots , n-1\} \, \vert \, \#I_j = 1 \} +  \# \{ i \in \{0, \dots , l-1\} \, \vert \, i \notin b_I (\{1, \dots , n\}) \}$$
plus one in case $k=n+1$ and $\# I_n = 1$. Likewise, by definition of $\check{I}$ and (\ref{eqnarrayorder}), 
$$\ord \big(T_{\check{I}} \cap ( \widecheck{T}_1 \times \widecheck{T}_2 )\big) = n - \# \{ j \in \{1, \dots , k-1\}  \, \vert \, \#I_j = 1 \} +  \# \{ i \in \{l, \dots , m\} \, \vert \, i \notin e_I (\{0, \dots , n-1\}) \}$$
plus one in case $k=0$ and $\# I_0 = 1$. By adding these quantities, whatever the value of $k$ is, we get
$$\begin{array}{l}
\ord \big(T_I \cap ( T_1 \times T_2 )\big)  + \ord \big(T_{\check{I}} \cap ( \widecheck{T}_1 \times \widecheck{T}_2 )\big) \\
= 2n +m+1 -  \# \{ j \in \{1, \dots , n-1\}  \, \vert \, \#I_j = 1 \} -\# e_I (\{0, \dots , n-1\}), \\
\end{array}$$
since $e_I (j) = b_I (j+1)$ for every $j \in \{0, \dots , n-1\} $ by definition, see \S \ref{subsecstraircases}. Moreover, since $e_I$ is increasing, $ \# e_I (\{0, \dots , n-1\}) = n - \# \{ j \in \{1, \dots , n-1\}  \, \vert \, \#I_j = 1 \} $, so that $\ord \big(T_I \cap ( T_1 \times T_2 )\big)  + \ord \big(T_{\check{I}} \cap ( \widecheck{T}_1 \times \widecheck{T}_2 )\big) = m+n+1$.
\end{proof}

Theorem \ref{theoproducttiles} now follows from Propositions \ref{propMorsetobasic}, \ref{propsamemvector} and \ref{propduality}.

\subsection{Proof of Corollary \ref{corisomtiles} and further examples}
\label{subsecexamples}

1) Corollary \ref{corhandle} is a special case of Theorem \ref{theoproducttiles} which produces Morse shellings on handles of any dimension and index. Figure \ref{figexhandles} provides some examples of such shellings, depicted using the associated mixed decompositions of the simplex $ \Delta_{[m]} $, see \S \ref{subsecmixeddecomp}.

 \begin{figure}[h]
   \begin{center}
    \includegraphics[scale=1]{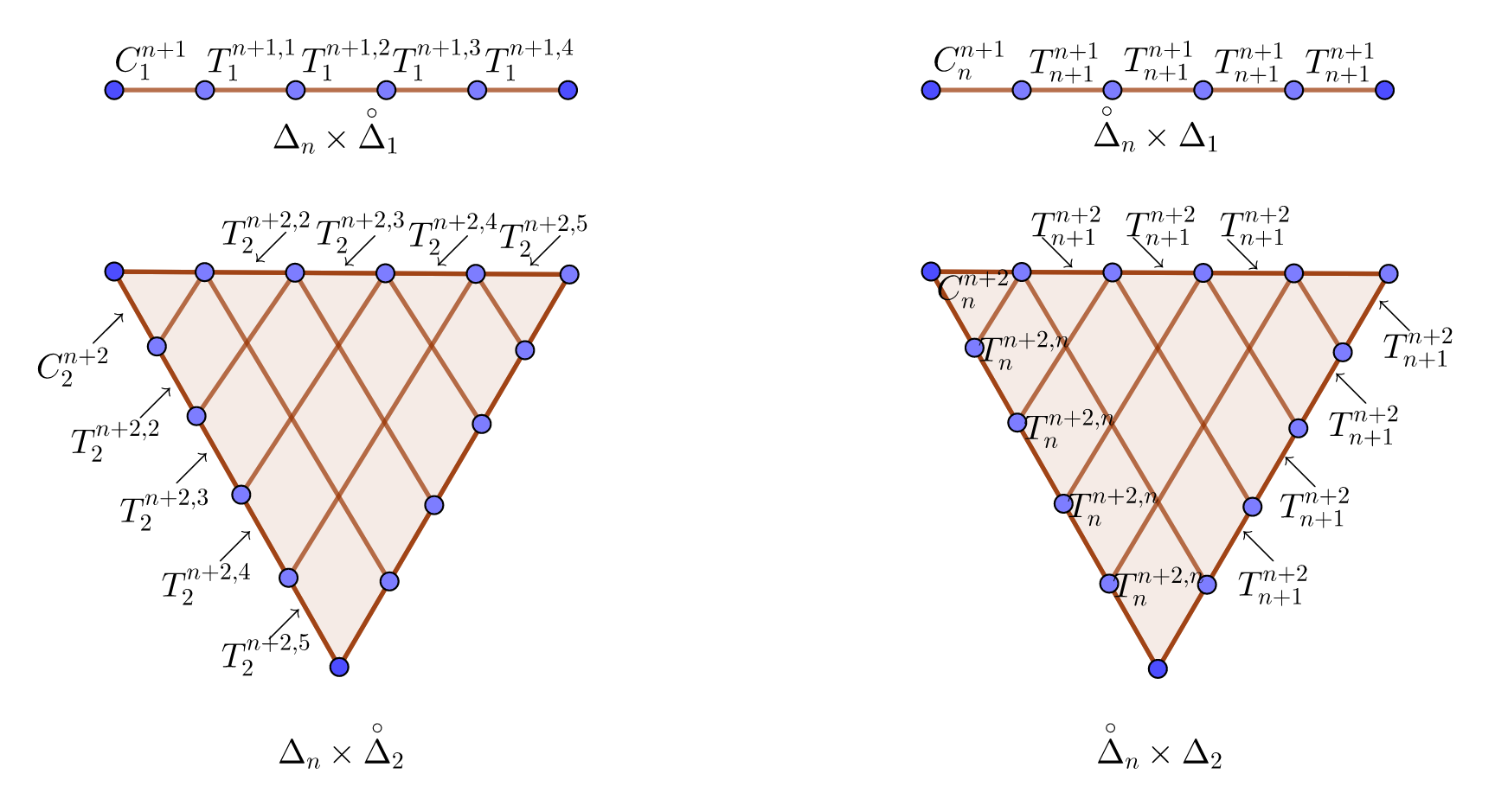}
    \caption{Morse shellings on handles.}
    \label{figexhandles}
      \end{center}
 \end{figure}

In general, we may check that the Morse shelling of the handle $\stackrel{\circ}{\Delta}_n \times \Delta_{m}$ given by Theorem \ref{theoproducttiles} uses one critical tile of index $n$, ${m+n-1 \choose n-1}$ basic tiles of order $n+1$ and for every $l \in \{ 2 , \dots , m\}$, ${m+n-l \choose n-1}$ Morse tiles isomorphic to $T_n^{m+n,m+n-l}$. Likewise, the handle $ \Delta_{n} \times \stackrel{\circ}{\Delta}_m$ is tiled by one critical tile of index $m$, ${m+n-1 \choose n}$ basic tiles of order $m+1$ and for every $l \in \{ m , \dots , m+n-2\}$, ${l\choose l+1-m}$ Morse tiles isomorphic to $T_m^{m+n,l}$. We do not detail these computations here. \\

2) Corollary \ref{corisomtiles} provides other special cases of shellings given by Theorem \ref{theoproducttiles}, namely $h$-tilings whose tiles are all isomorphic to each other.  Let us prove now this corollary. 

\begin{proof}[Proof of Corollary \ref{corisomtiles}]
Let $T_1 = \Delta_{[n]} \setminus \cup_{j=0}^{n-1} \Delta_{[n] \setminus \{ j \}}$ and $T_2 =  \Delta_{[m]}$. The shelled triangulation of $T_1 \times T_2$ given by Theorem \ref{theoproducttiles} uses the tiles $\big( T_I \cap (T_1 \times T_2)\big)_{I \in I(n,m)}$. From (\ref{eqnarrayorder}), we know that for every $I \in I(n,m)$, $\ord \big(T_I \cap ( T_1 \times T_2 )\big) = n$. Moreover, these tiles are all basic, so that the first part of Corollary \ref{corisomtiles} is proved. Likewise, let $T'_1 = \Delta_{[n]} \setminus \Delta_{\{1, \dots , n\}}$ and $T'_2 =  \Delta_{[m]} \setminus \cup_{i=0}^{m} \Delta_{[m] \setminus \{ i \}}$. From (\ref{eqnarrayorder}), we know that for every $I \in I(n,m)$,

\begin{eqnarray*}
\ord \big(T_I \cap ( T'_1 \times T'_2 )\big) &=&  n - \# \{ j \in \{1, \dots , n-1\} \, \vert \, \#I_j = 1 \} +  \# \{ i \in [m] \, \vert \, i \notin b_I (\{1, \dots , n\}) \}\\
&=& \# e_I (\{0, \dots , n-1\}) +m+1 - \# e_I (\{0, \dots , n-1\}) \\
&= &m+1.\\
\end{eqnarray*}
Again, all these tiles are basic, hence the result. 
\end{proof}

In the same way, $\Delta_{[n]} \times T_m^m$ (resp. $\stackrel{\circ}{\Delta}_n \times T_1^m$) inherits a shelled triangulation whose tiles are all isomorphic to each other, of order $m$ (resp. of order $n+1$).\\

3) We have observed in \S \ref{subsecpftheoduality} that the symmetry given by Theorem \ref{theoduality} is induced by the involution $I \in I(n,m) \mapsto \check{I} \in I(n,m)$. This symmetry appears on the examples given by Figures \ref{figexduality1}, \ref{figexduality2}, \ref{figexduality3} and \ref{figexduality4}.

 \begin{figure}[h]
   \begin{center}
    \includegraphics[scale=1]{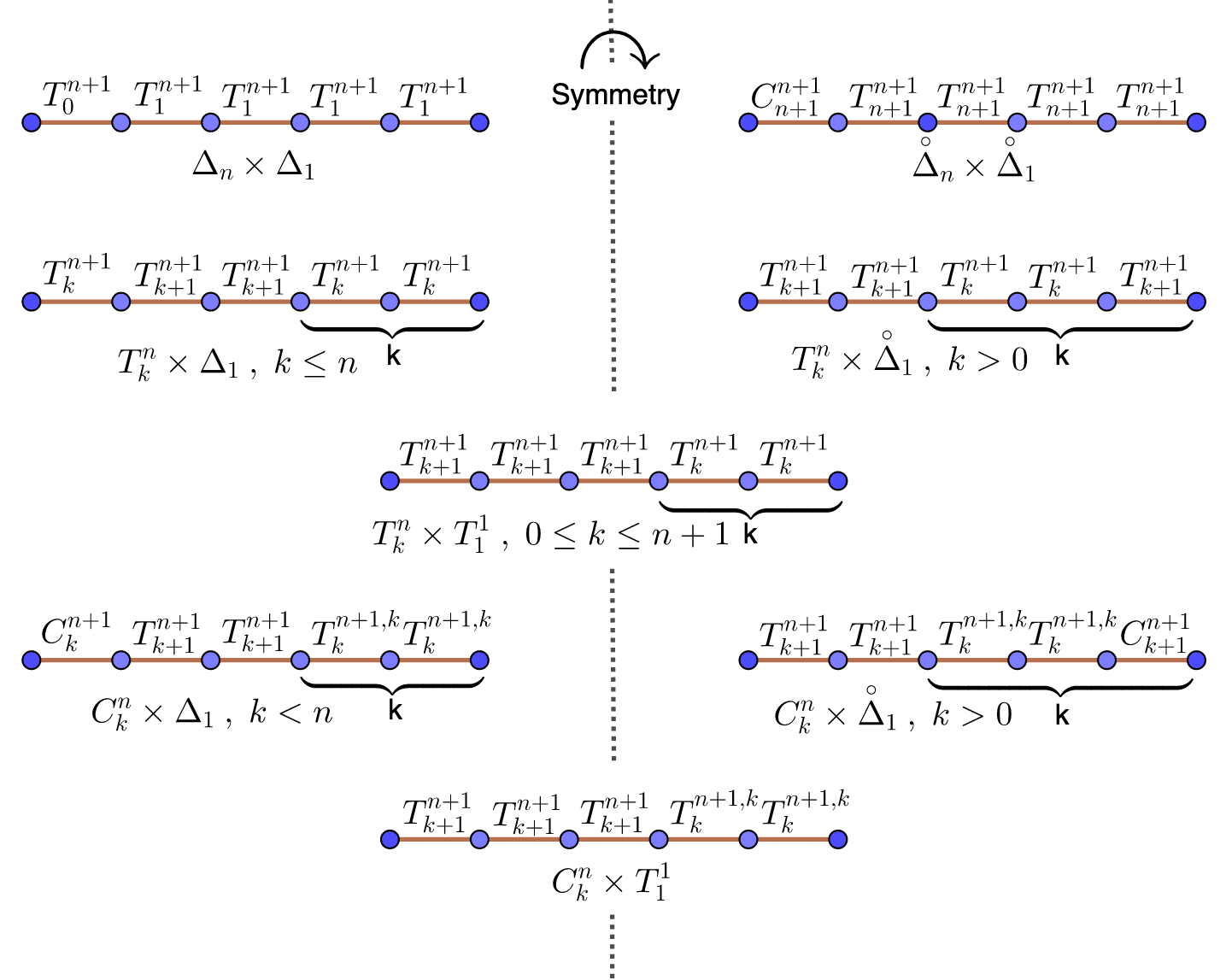}
    \caption{The symmetry observed on some tilings, with $n=4$.}
    \label{figexduality1}
      \end{center}
 \end{figure}

 \begin{figure}[h]
   \begin{center}
    \includegraphics[scale=1]{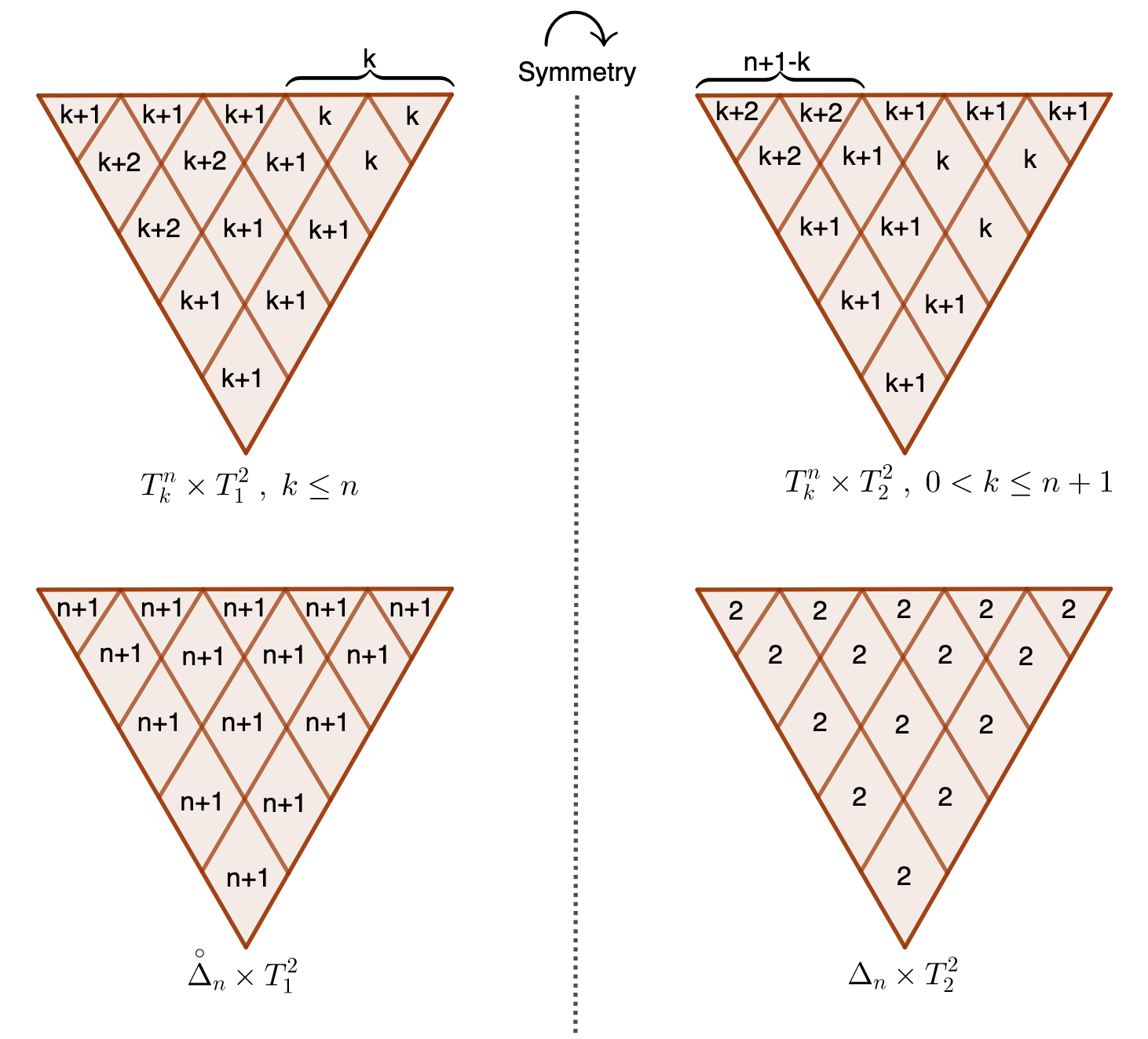}
    \caption{The symmetry observed on more tilings, with $n=4$.}
    \label{figexduality2}
      \end{center}
 \end{figure}

 \begin{figure}[h]
   \begin{center}
    \includegraphics[scale=1]{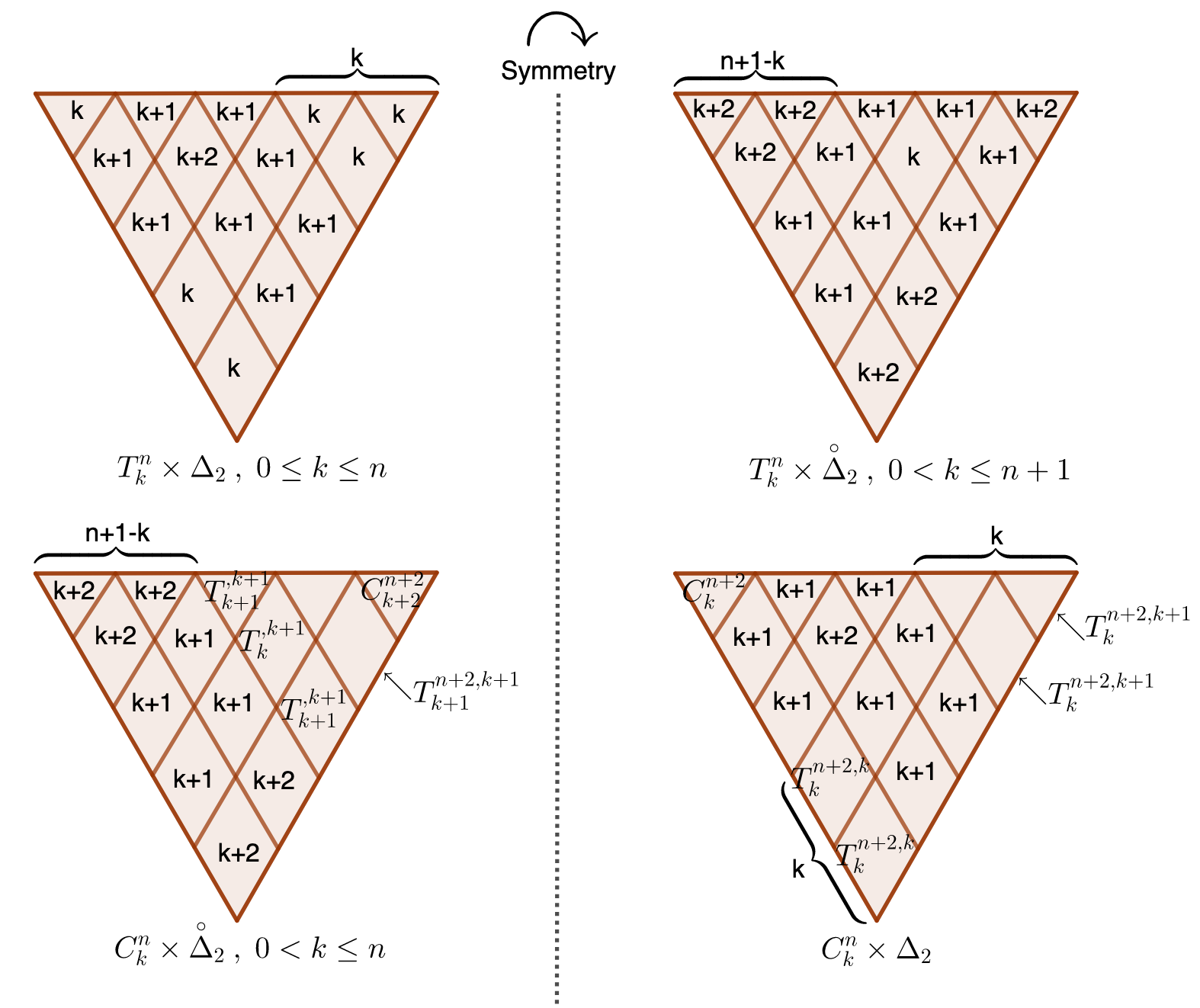}
    \caption{The symmetry observed on more tilings, with $n=4$.}
    \label{figexduality3}
      \end{center}
 \end{figure}

 \begin{figure}[h]
   \begin{center}
    \includegraphics[scale=1]{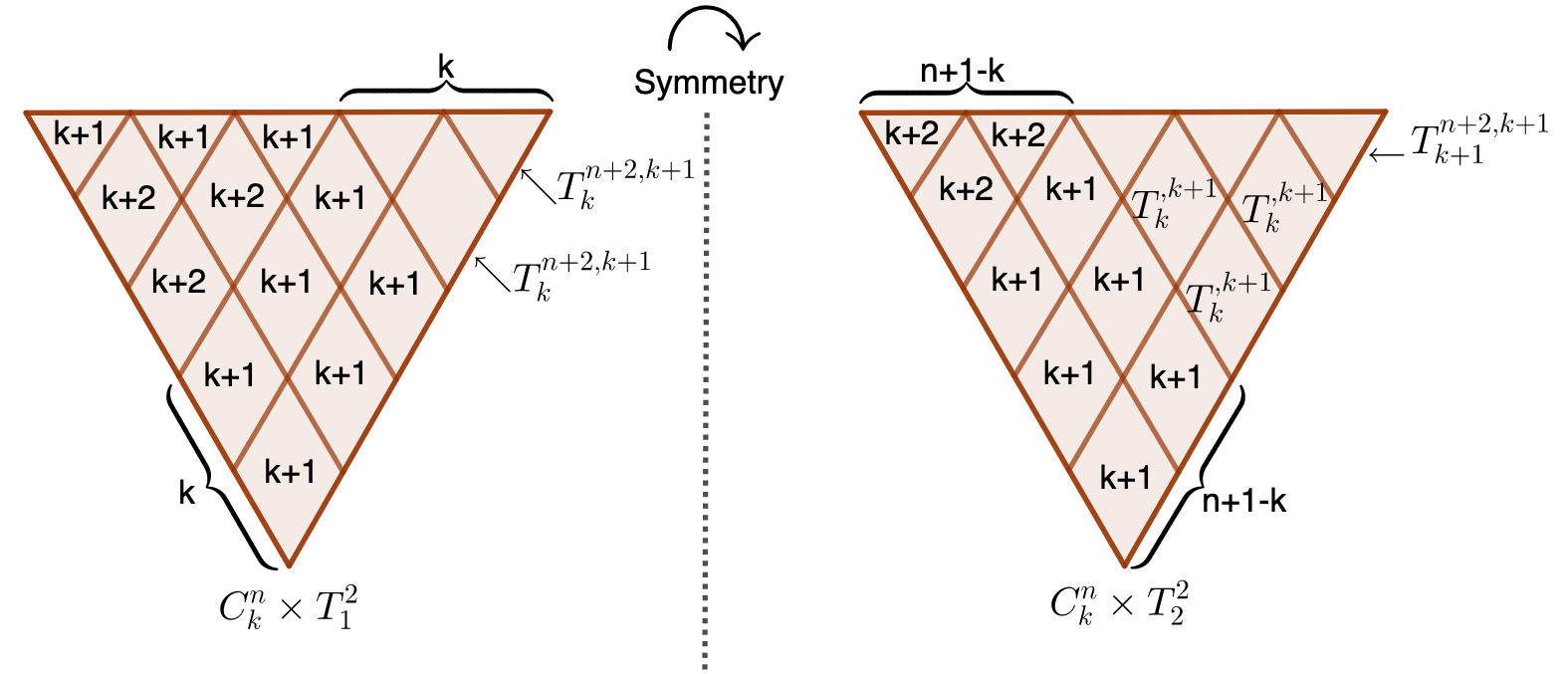}
    \caption{The symmetry observed on more tilings, with $n=4$.}
    \label{figexduality4}
      \end{center}
 \end{figure}

\section{Tilings on products of two complexes}
\label{sectilings}

\subsection{Proof of Theorems \ref{theoproduct} and \ref{theoproducthtiling}}
\label{subsecprooftheoproduct}

Let $K$ and $L$ be the simplicial complexes underlying $S_1$ and $S_2$. Let us equip their edges with orientations given by Definition \ref{deftame}, the tilings of $S_1$ and $S_2$ being tame by hypothesis. Then, the vertices of every simplex $\sigma$ of $K$ and $\theta$ of $L$ inherit a total order given by Proposition \ref{proporder}, so that $\sigma \times \theta$  gets equipped with a staircase triangulation given by Corollary \ref{corprimtriangulations}. Moreover, the face inclusions preserve these orders by Proposition \ref{proporder}, so that the staircase triangulations on these products glue together to define a primitive triangulation on $K \times L$, compare Lemma $II. 8.9$ of \cite{EilSteen}. If $\tau_1$ and $\tau_2$ are shelled, the tiles of $\tau_1$ get labelled $T_1, \dots , T_{N_1}$ and the tiles of $\tau_2$ labelled $T'_1, \dots , T'_{N_2}$. Let us label the underlying simplices $\sigma_1, \dots , \sigma_{N_1}$ and $\theta_1, \dots , \theta_{N_2}$, they shell $K$ and $L$ respectively. The products $\sigma_k \times \theta_l$ get then ordered by the lexicographic order on pairs $(k,l) \in \{1 , \dots , N_1 \} \times \{1, \dots, N_2\}$. By Corollary \ref{corprimtriangulations}, the triangulation of each product $\sigma_k \times \theta_l$ is itself shelled, providing an order on its maximal simplices $(\Delta_I^{(k,l)})_{I \in I(n,m)}$, where $n$ denotes the dimension of $\sigma_k $ and $m$ the dimension of $\theta_l$. The lexicographic order on triplets $(k,l,I) \in \{1 , \dots , N_1 \} \times \{1, \dots, N_2\} \times I(n,m)$ induces then a shelling on the triangulated product $K \times L$. We have here used a slight abuse of notation since the dimension $n$ (resp. $m$) depends on $k$ (resp. $l$) in general. Now, $S_1 \times S_2$ is partitionned by the products $T_k \times T'_l$,
$(k,l) \in \{1 , \dots , N_1 \} \times \{1, \dots, N_2\}$, and by Theorem \ref{theoproducttiles}, these products, equipped with the preceding triangulation, are Morse shellable. Indeed, the tilings of $S_1$ and $S_2$ being tame, we know from Definition \ref{deftame} that Condition $M$ of \S \ref{subseccond} is satisfied and this guaranties the Morse shellability of $T_k \times T'_l$. Again, the lexicographic order on triplets $(k,l,I) \in \{1 , \dots , N_1 \} \times \{1, \dots, N_2\} \times I(n,m)$ induces a Morse shelling on the triangulated product $S_1 \times S_2$. Moreover, this shelling is tame since the ones of $T_k \times T'_l$ are by Theorem \ref{theoproducttiles}.

By Theorem \ref{theoproducttiles}, the critical tiles of $S_1 \times S_2$ are then in bijective correspondence with the products $T_k \times T'_l$ of a critical tile $T_k$ of $S_1$ and a critical tile $T'_l$ of $S_2$, their indices being the sum of the index of $T_k$ with the index of $T'_l$. The $c$-vector of $S_1 \times S_2$  is thus the product $c(S_1)c(S_2)$ by definition. Finally, if the tiles of $S_1$ (resp. of $S_2$) all have same dimension $n$ (resp. $m$) and if $h(S_1)$ and $h(S_2)$ are palindromic, then, we may group the tiles of $S_1$ (resp. of $S_2$) by pairs of tiles of order $j$ and $n+1-j$ (resp. $i$ and $m+1-i$), $0 \leq j < \frac{n+1}{2}$ (resp. 
$0 \leq i < \frac{m+1}{2}$), leaving alone the tiles of order $\frac{n+1}{2}$ (resp. $\frac{m+1}{2}$) in case $n$ (resp. $m$) is odd. The products $T_k \times T'_l$, $(k,l) \in \{1 , \dots , N_1 \} \times \{1, \dots, N_2\}$, are then grouped by quadruples, pairs or left alone depending on the cases, but Theorem \ref{theoduality} ensures that the contribution of each group to the $h$-vector of $S_1 \times S_2$ is palindromic. Adding together the contributions of all these groups, we deduce Theorem \ref{theoproduct}. Now, under the hypothesis of Theorem \ref{theoproducthtiling}, for every
$(k,l) \in \{1 , \dots , N_1 \} \times \{1, \dots, N_2\} $, Condition $h$ of \S \ref{subseccond} gets satisfied by the total orders on the vertices of $T_k$ and $T'_l$, so that the shelling $\big( T_I \cap (T_k \times T'_l)\big)_{I \in I(n,m)}$ given by Theorem \ref{theoproducttiles} uses only basic tiles, which proves Theorem \ref{theoproducthtiling}. $\square$

\subsection{Proof of Theorem \ref{theodelta2}}

The simplicial complex $\partial \Delta_2$ can be equipped with the $h$-tiling using three tiles of dimension and order one, see Example \ref{exhtiling}. We may orient each edge in such a way that it goes towards the remaining vertex of each tile. Let $K$ be the simplicial complex underlying $S$. We choose a total order on its vertices. The conditions of Theorem \ref{theoproducthtiling} are then satisfied so that $S \times \partial \Delta_2$ inherits an $h$-tiled primitive triangulation whose $h$-vector is moreover palindromic provided $h(S)$ is. For every tile $T_1$ of $S$ and $T_2$ of $\partial \Delta_2$, the orders chosen on vertices provide an isomorphism between $T_1 \times T_2$ and $( \Delta_{[n]} \setminus \cup_{j \in J_1} \Delta_{[n] \setminus \{ j \}} ) \times (  \Delta_{\{0, 1\}} \setminus \Delta_{\{0\}} ) $, where $n$ is the dimension of $T_1$. For every $I \in I(n,1)$, only one interval $I_{j_0}$ is not a singleton, $j_0 \in \{0, \dots , n\}$, and the tile $T_I \cap (\Delta_{[n]} \times T_2)$ is of order one. Thus, the tile $T_I \cap (T_1 \times T_2)$ is of order $\# J_1$ if $j_0 \in J_1$ and of order $\# J_1 + 1$ otherwise, see (\ref{eqnarrayorder}). The tiling $\big(T_I \cap (T_1 \times T_2)\big)_{I \in I(n,1)}$ uses then $\ord (T_1)$ basic tiles of order $\ord (T_1)$ and $\dim (T_1) + 1 - \ord (T_1)$ basic tiles of order $\ord (T_1)+1$. Theorem \ref{theodelta2} follows.

 \addcontentsline{toc}{part}{References}


Univ Lyon, Universit\'e Claude Bernard Lyon 1, CNRS UMR 5208, Institut Camille Jordan, 43 blvd. du 11 novembre 1918, F-69622 Villeurbanne cedex, France

{\tt welschinger@math.univ-lyon1.fr.}
\end{document}